\theoremstyle{plain}
\declaretheorem[title=Theorem, parent=section]{theorem}
\declaretheorem[title=Lemma,sibling=theorem]{lemma}
\declaretheorem[title=Proposition,sibling=theorem]{proposition}
\declaretheorem[title=Corollary,sibling=theorem]{corollary}
\theoremstyle{definition}
\declaretheorem[title=Definition,sibling=theorem]{definition}
\declaretheorem[title=Remark,sibling=theorem]{remark}
\declaretheorem[title=Remark, numbered=no]{remark*}
\declaretheorem[title=Assumption, numbered=no]{assumption*}
\numberwithin{equation}{section}
\newcommand{\N}{\mathbb{N}}
\newcommand{\R}{\mathbb{R}}
\newcommand{\cE}{\mathcal{E}}
\newcommand{\eps}{\varepsilon}
\DeclareMathOperator{\dist}{dist}
\DeclareMathOperator{\supp}{supp}
\renewcommand{\d}{\textnormal{d}}
\DeclareMathOperator{\tail}{Tail}
\newcommand{\average}{{\mathchoice {\kern1ex\vcenter{\hrule height.4pt
width 6pt depth0pt} \kern-9.7pt} {\kern1ex\vcenter{\hrule
height.4pt width 4.3pt depth0pt} \kern-7pt} {} {} }}
\newcommand{\dashint}{\average\int}
\begin{document}

\title[Obstacle problems for nonlocal operators with singular kernels]{Obstacle problems for nonlocal operators \\ with singular kernels}

\author{Xavier Ros-Oton}
\author{Marvin Weidner}

\address{ICREA, Pg. Llu\'is Companys 23, 08010 Barcelona, Spain \& Universitat de Barcelona, Departament de Matem\`atiques i Inform\`atica, Gran Via de les Corts Catalanes 585, 08007 Barcelona, Spain \& Centre de Recerca Matem\`atica, Barcelona, Spain}
\email{xros@icrea.cat}

\address{Universitat de Barcelona, Departament de Matem\`atiques i Inform\`atica, Gran Via de les Corts Catalanes 585, 08007 Barcelona, Spain}
\email{mweidner@ub.edu}

\keywords{nonlocal, regularity, stable operator, obstacle problem, free boundary}

\subjclass[2020]{47G20, 35B65, 35R35, 31B05, 31C35}


\begin{abstract}
In this paper we establish optimal regularity estimates and smoothness of free boundaries for nonlocal obstacle problems governed by a very general class of integro-differential operators with possibly singular kernels. 
More precisely, in contrast to all previous known results, we are able to treat nonlocal operators whose kernels are not necessarily pointwise comparable to the one of the fractional Laplacian. 
Such operators might be very anisotropic in the sense that they ``do not see'' certain directions at all, or might have substantial oscillatory behavior, causing the nonlocal Harnack inequality to fail.
\end{abstract}

\maketitle

\section{Introduction}  

The goal of this article is to study the regularity theory for nonlocal obstacle problems
\begin{align}
\label{eq:OP}
\min\{ L u, u - \phi \} = 0~~ \text{ in } \R^n,
\end{align}
where $\phi : \R^n \to \R$ is a smooth obstacle and $L$ is an integro-differential operator of the form
\begin{align}
\label{eq:L}
L u(x) = \text{p.v.} \int_{\R^n} \big(u(x) - u(x+y)\big) K(y) \d y.
\end{align}
The operator $L$ is governed by its jumping kernel $K : \R^n \to [0,\infty]$ which is assumed to be symmetric, i.e. $K(y) = K(-y)$, and homogeneous of degree $-n-2s$, where $s \in (0,1)$, i.e.
\begin{align}
\label{eq:hom}
K(y) = |y|^{-n-2s}K(y/|y|).
\end{align} 
Due to the homogeneity of $K$, the operator $L$ is a stable operator of order $2s$, whose anisotropy is governed by the action of $K$ on the unit sphere $\mathbb{S}^{n-1}$.

\subsection{Background}
Obstacle problems of the type \eqref{eq:OP}-\eqref{eq:L} arise naturally in probability and mathematical finance (optimal stopping for L\'evy processes, pricing of options), as well as in models of interacting energies in physical, biological, and material sciences; see \cite[Chapter 4]{FeRo23} for a brief description of these models.

The regularity theory for obstacle problems \eqref{eq:OP} for integro-differential operators \eqref{eq:L} was first developed in \cite{Sil07,ACS08,CSS08} for the fractional Laplacian.
The main results of Caffarelli-Salsa-Silvestre may be summarized as follows: When $L = (-\Delta)^s$, it holds:
\begin{itemize}
\item[(1)] Solutions $u$ to \eqref{eq:OP} are $C^{1,s}$. This regularity is optimal.
\item[(2)] The free boundary $\partial \{ u > \phi\}$ splits into regular and degenerate points.
\begin{itemize}
\item[(2a)] Regular points $x_0$ are those for which the following holds:
\[\begin{split}
\sup_{B_r(x_0)} (u - \phi) \asymp r^{1+s},
\end{split}\]
and in addition 
\[\qquad \qquad \qquad \frac{(u-\phi)(x_0+rx)}{\|u-\phi\|_{L^\infty(B_r(x_0))}} \xrightarrow{\ r\to0\ } (x\cdot e)_+^{1+s} \quad \textrm{for some}\quad e\in \mathbb S^{n-1}.\]
Moreover, the set of regular points is an open subset of the free boundary, and near those points the free boundary is $C^{1,\gamma}$.
\item[(2b)] Degenerate points $x_0$ are those for which the following holds:
\[\begin{split}
0 \le \sup_{B_r(x_0)} (u - \phi) \lesssim r^{1+s+\alpha}  ~~  \text{ for some } \alpha > 0.
\end{split}\]
\end{itemize}
\end{itemize}

The case of the fractional Laplacian $(-\Delta)^s$ is quite special, because one can use the extension problem for this operator, making the obstacle problem for the fractional Laplacian equivalent to a (weighted) thin obstacle problem in $\R^{n+1}_+$. 
The identification with a local problem gives access to various tools, such as monotonicity formulas, which are known to be very useful in the proof of regularity results, such as (1) and (2). 
We refer to \cite{GaPe09}, \cite{PePe15}, \cite{KPS15}, \cite{JhNe17}, \cite{GPPS17}, \cite{FoSp18}, \cite{FeRo18}, \cite{CSV20}, \cite{CFR21}, \cite{FeJh21}, \cite{Kuk21}, \cite{SaYu21}, \cite{FeRo21}, and \cite{FeTo22} for further results on the obstacle problem for the fractional Laplacian and variants thereof, including higher regularity of free boundaries, and fine structure results for degenerate points.

The analysis of \eqref{eq:OP} becomes significantly more delicate in case $L$ is anisotropic, in the sense that it can neither be reduced to the fractional Laplacian nor be related to an equivalent local problem. 
This requires the application of new tools in order to study the regularity theory for \eqref{eq:OP}.
Recently, in \cite{CRS17} and \cite{FRS23}, new techniques have been developed to prove (1) and (2) for solutions to obstacle problems \eqref{eq:OP} governed by  nonlocal operators $L$ with kernels $K$ that are pointwise comparable to the one of the fractional Laplacian, i.e.,
\begin{align}
\label{eq:Kcomp}\tag{$K_{\asymp}$}
0<\lambda \le K(\theta) \le \Lambda \qquad \forall \theta \in \mathbb{S}^{n-1}.
\end{align}
We refer to \cite{CDS17}, \cite{AbRo20}, \cite{RoTo21}, and \cite{RTW23}, for further results on obstacle problems \eqref{eq:OP} for nonlocal operators \eqref{eq:L} satisfying \eqref{eq:Kcomp}.

Let us point out that, despite the significant recent advances in the theory, so far \textit{nothing is known} about the regularity of solutions or free boundaries for the nonlocal obstacle problem \textit{if \eqref{eq:Kcomp} is violated}. 
Some important examples of nonlocal operators of the form \eqref{eq:L}-\eqref{eq:hom} whose kernels are not pointwise comparable to the one of the fractional Laplacian are:
\[\begin{array}{l}
\displaystyle L_1 u(x) = \text{p.v.} \int_{\mathcal{C}} \big(u(x) - u(x+y)\big) \frac{\d y}{|y|^{n+2s}}, \qquad \mathcal{C}  \text{ double-cone with vertex at } 0, \vspace{4mm}\\
\displaystyle L_2u(x) = \text{p.v.} \int_{\R^n} \big(u(x) - u(x+y)\big) \frac{a(y/|y|)}{|y|^{n+2s}}\,\d y, \qquad \textrm{with}\quad a\in L^p(\mathbb S^{n-1})\setminus L^\infty(\mathbb S^{n-1}), \vspace{4mm}\\
\displaystyle L_3 u(x) = (-\partial_{x_1x_1}^2)^s u(x)  + \dots + (-\partial_{x_nx_n}^2)^s u(x).
\end{array}\]
Note that the corresponding jumping kernels
\[\begin{split}
K_1(\theta) := \mathbbm{1}_{\mathcal{C}}(\theta), \qquad K_2(\theta) := a(\theta),
\qquad K_3 := \delta_{\pm e_1} +  \delta_{\pm e_2} + \dots +  \delta_{\pm e_n}
\end{split}\]
clearly violate \eqref{eq:Kcomp} since they are either not fully supported, or possess singularities on $\mathbb{S}^{n-1}$. 
Moreover, in the absence of \eqref{eq:Kcomp}, jumping kernels can exhibit oscillatory behavior leading to the failure of the Harnack inequality; see \cite{BoSz05}, \cite{BoSz07}.

The purpose of this article is precisely to investigate the nonlocal obstacle problem \eqref{eq:OP} for operators that violate \eqref{eq:Kcomp}, and to establish for the first time optimal $C^{1,s}$-regularity estimates for solutions (1), as well as the regularity of the free boundary near regular points (2) in such general setting.

More precisely, we will consider general stable operators of the type \eqref{eq:L}-\eqref{eq:hom}; see \cite{RoSe16,FeRo23}.
The ellipticity conditions then become
\begin{align}
\label{eq:Glower}\tag{$\mathcal{G}_{\ge}$}
\inf_{e \in \mathbb{S}^{n-1}} \int_{\mathbb{S}^{n-1}} |e \cdot \theta|^2 K(\theta) \d \theta &\ge \lambda>0,\\
\label{eq:Gupper}\tag{$\mathcal{G}_{\le}$}
\Vert K \Vert_{L^1(\mathbb{S}^{n-1})} &\le \Lambda.
\end{align}
These conditions are satisfied for any $K|_{\mathbb S^{n-1}}\in L^1(\mathbb S^{n-1}) \setminus \{ 0 \}$.
In fact, in some of our results we could even allow $K$ to be a purely singular measure as in $L_3$ above; however for simplicity of notation we assume that $K$ is absolutely continuous.

Note that \eqref{eq:Glower} and \eqref{eq:Gupper} are natural conditions in the sense that they are equivalent to the comparability of the Fourier symbols of $L$ and $(-\Delta)^s$; see Lemma \ref{lemma:Fourier} below.


In some of our results, we need a slightly stronger assumption on $K$, namely
\begin{align}
\label{eq:Kupper-p}\tag{$K_{\le}^p$}
\Vert K(\cdot) \Vert_{L^p(\mathbb{S}^{n-1})} \le \Lambda
\end{align}
with $p>1$.

\subsection{Main results}

Our first main result is the following quantitative estimate which states that closeness of the solution to a blow-up of the form $(x \cdot e)_+^{1+s}$ implies local smoothness of the free boundary and local $C^{1,s}$-estimates of the solution. 
This result holds for \emph{any} stable operator $L$:

\begin{theorem}[Flatness implies $C^{1,\gamma}$]
\label{prop:4.4.15}
Let $s\in(0,1)$ and $L$ be a general stable operator of the form \eqref{eq:L}-\eqref{eq:hom}-\eqref{eq:Glower}-\eqref{eq:Gupper}. 
Let $\alpha \in (0,\min \{ s , 1-s\} )$ and $\kappa_0 > 0$. 
Then, there are $\eps > 0$, $\delta > 0$, depending only on $n,s,\lambda,\Lambda,\alpha,\kappa_0$, such that the following holds true:

Let $u \in C^{0,1}_{\rm loc}(\R^n)$ be such that
\begin{itemize}
\item[(i)] $\min \{ Lu - f , u \} = 0$ in $B_{1}$ in the distributional sense, with $|\nabla f| \le 1$,
\item[(ii)] $D^2 u \ge - \mathrm{Id}$ in $B_{1}$ with $0 \in \partial\{ u > 0 \}$,
\item[(iii)] $\Vert  \nabla u \Vert_{L^{\infty}(B_R)} \le R^{s+\alpha}$ for all $R\geq1$,
\item[(iv)] $\Vert u - \kappa (x \cdot e)_+^{1+s} \Vert_{C^{0,1}(B_{1})} \le \eps$ for some $\kappa \ge \kappa_0$ and $e \in \mathbb{S}^{n-1}$.
\end{itemize}
Then, the free boundary $\partial \{ u > 0 \}$ is a $C^{1,\gamma}$-graph in $B_{\delta}$, 
 and moreover $u\in C^{1+s}(B_\delta)$ with
\[\begin{split}
\Vert \nabla u \Vert_{C^s(B_{\delta})} \le C.
\end{split}\]
The constants 
$C$ and $\gamma > 0$ depend only on $n,s,\lambda,\Lambda,\alpha,\kappa_0$.
\end{theorem}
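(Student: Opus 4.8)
The plan is to reduce the theorem to one \emph{improvement-of-flatness} step, which is then iterated along dyadic scales at every free boundary point near the origin. Write $x=(x',x_n)\in\R^{n-1}\times\R$ and, for $x_0\in\partial\{u>0\}\cap B_{1/2}$ and $r\in(0,1/2)$, set $u_{x_0,r}(x):=r^{-(1+s)}u(x_0+rx)$. Under this rescaling the semiconvexity in (ii) is preserved up to an error $O(r^{1-s})\to0$, the equation in (i) becomes $\min\{Lu_{x_0,r}-f_{x_0,r},u_{x_0,r}\}=0$ with $\|\nabla f_{x_0,r}\|_{L^\infty}=r^{s}\to0$ (so the forcing is asymptotically negligible), and (iii) turns into $\|\nabla u_{x_0,r}\|_{L^\infty(B_R)}\le CR^{s+\alpha}$ for all $R\ge1$. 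The heart of the matter is: there exist $\rho\in(0,1)$, $\eps_0>0$, $C_0$ and $\gamma_0>0$, depending only on $n,s,\lambda,\Lambda,\alpha,\kappa_0$, such that whenever $v$ satisfies the rescaled versions of (i)--(iv) with $\|v-\kappa(x\cdot e)_+^{1+s}\|_{C^{0,1}(B_1)}=\eps\le\eps_0$ and $\kappa\ge\kappa_0$, there are $e'\in\mathbb{S}^{n-1}$ and $\kappa'\ge\kappa_0/2$ with $|e-e'|+|\kappa-\kappa'|\le C_0\eps$ and
\[
\big\|\rho^{-(1+s)}v(\rho\,\cdot\,)-\kappa'(x\cdot e')_+^{1+s}\big\|_{C^{0,1}(B_1)}\le\tfrac12\,\rho^{\gamma_0}\eps .
\]
Iterating this at each $x_0\in\partial\{u>0\}\cap B_\delta$ yields directions $e_k(x_0)$ and heights $\kappa_k(x_0)$ that converge geometrically, $|e_{k+1}-e_k|+|\kappa_{k+1}-\kappa_k|\lesssim\rho^{k\gamma_0}\eps$; the limit normal $\nu(x_0)$ then inherits a H\"older modulus in $x_0$ by comparing the flatness at nearby free boundary points, which is exactly the statement that $\partial\{u>0\}$ is a $C^{1,\gamma}$ graph in $B_\delta$ for some $\gamma=\gamma(n,s,\lambda,\Lambda,\alpha,\kappa_0)>0$. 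Hypothesis (iv) provides the smallness to start the iteration at $0$, and a standard propagation-of-flatness argument does so at every $x_0\in\partial\{u>0\}\cap B_\delta$ after shrinking $\delta$.

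I would prove the claim by compactness and contradiction. If it fails, there are stable operators $L_k$ obeying \eqref{eq:Glower}--\eqref{eq:Gupper} with the same constants, functions $u_k$, unit vectors $e_k$ and heights $\kappa_k\ge\kappa_0$ with $\eps_k:=\|u_k-\kappa_k(x\cdot e_k)_+^{1+s}\|_{C^{0,1}(B_1)}\to0$ for which no admissible $(e',\kappa')$ realizes the improvement. Rotating so that $e_k=e_n$ and absorbing $\kappa_k$, consider
\[
v_k:=\eps_k^{-1}\big(u_k-\kappa_k(x_n)_+^{1+s}\big).
\]
Using (ii), the equation, the growth bound from (iii), and the interior and boundary regularity estimates for the $L_k$ (uniform in $k$ by \eqref{eq:Glower}--\eqref{eq:Gupper}), one establishes uniform local $C^{\beta}$ bounds for $v_k$ and its derivatives (which grow strictly slower than $|x|^{2s}$ since $s+\alpha<2s$, so that the equations below are understood classically), a growth control $|v_k(x)|\le C(1+|x|)^{1+s+\alpha}$, and --- using the semiconvexity (ii) together with the closeness (iv) --- that the coincidence sets $\{u_k=0\}$ converge to the half-space $\{x_n\le0\}$, whence $v_k=0$ on $\{x_n\le0\}$ in the limit. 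Passing to a subsequence, $v_k\to v$ locally uniformly and $L_k\to L_\infty$ in the sense of Fourier symbols, with $L_\infty$ again stable. Since $(x_n)_+^{1+s}$ is a global solution of the obstacle problem for any stable operator (recall $L_\infty\big((x_n)_+^s\big)=0$ in $\{x_n>0\}$, hence also $L_\infty\big((x_n)_+^{1+s}\big)=0$ there by differentiation and homogeneity), the limit $v$ solves the \emph{linearized obstacle problem}
\[
v=0\ \text{ in }\{x_n\le0\},\qquad L_\infty v=0\ \text{ in }\{x_n>0\},\qquad |v(x)|\le C(1+|x|)^{1+s+\alpha}.
\]

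The decisive point is a Liouville-type classification of this linearized problem (equivalently, boundary regularity for it at the origin): every such $v$ must be of the form
\[
v(x)=a\,(x_n)_+^{1+s}+(x_n)_+^{s}\,(b'\cdot x'),\qquad a\in\R,\ b'\in\R^{n-1}.
\]
These are precisely the derivatives of the profile $\kappa(x_n)_+^{1+s}$ with respect to the parameters $\kappa$ and $e$, so subtracting the leading part of $v$ corresponds to an admissible choice of $(e',\kappa')$ that contradicts non-improvement, proving the claim. To obtain the classification I would differentiate $v$ tangentially: for $i\le n-1$ the incremental quotients $v(\cdot+he_i)-v$ still vanish on $\{x_n\le0\}$ and are $L_\infty$-harmonic in $\{x_n>0\}$, and the fine boundary regularity for $L_\infty$ in the half-space --- that a function vanishing on $\{x_n\le0\}$ with subquadratic growth is bounded by $C(x_n)_+^{s}$ near $\{x_n=0\}$ and that its quotient by $(x_n)_+^{s}$ is $C^{\gamma'}$ up to the boundary --- lets one lower the growth of the tangential derivative from order $1+s+\alpha$ to below order $1$, forcing $\partial_{x_i}v=c_i(x_n)_+^{s}$. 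It then remains to classify the one-dimensional solutions $w=w(x_n)$ with $w=0$ on $(-\infty,0]$, that are $L_\infty$-harmonic on $(0,\infty)$ and of growth less than $2$; these span $\{(x_n)_+^{s},(x_n)_+^{1+s}\}$, and combining with the tangential information gives the claimed form of $v$.

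Finally, once $\partial\{u>0\}$ is a $C^{1,\gamma}$ graph in $B_\delta$, the $C^{1+s}$ bound follows from the boundary regularity theory for $Lu=f$ in $C^{1,\gamma}$ domains with Lipschitz right-hand side: tangential derivatives $\partial_\tau u$ solve $L(\partial_\tau u)=\partial_\tau f\in L^\infty$ in $\{u>0\}$ and vanish on the ($C^{1}$) free boundary, so $\partial_\tau u\asymp d^{s}$ with $\partial_\tau u/d^{s}\in C^{\gamma'}$ up to the boundary, where $d=\dist(\cdot,\{u>0\}^{c})$; the normal derivative is controlled by the already established behaviour $u\asymp d^{1+s}$; together these give $\nabla u\in C^{s}(B_{\delta'})$, and a covering/scaling argument yields the estimate on $B_\delta$. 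The main obstacle is the Liouville / boundary-regularity input of the third step for general stable operators violating \eqref{eq:Kcomp}: when $K$ is supported in a cone, unbounded on $\mathbb{S}^{n-1}$, or purely atomic, the usual barriers and the nonlocal Harnack inequality are unavailable, so the estimates $v\le C(x_n)_+^{s}$ and $v/(x_n)_+^{s}\in C^{\gamma'}$ --- and likewise the half-space classification --- must be obtained working only with the Fourier-side ellipticity \eqref{eq:Glower}--\eqref{eq:Gupper}; a secondary difficulty is making the convergence $\{u_k=0\}\to\{x_n\le0\}$ in the second step fully quantitative, which is exactly where (ii) and (iv) enter.
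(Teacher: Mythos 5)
Your proposal follows a genuinely different route from the paper: you set up a dyadic improvement-of-flatness iteration whose key step is a compactness/linearization argument reducing matters to a Liouville-type classification of the ``linearized obstacle problem'' in a half-space, whereas the paper never linearizes. Instead it first shows (Lemma \ref{lemma:4.4.13}) that flatness forces monotonicity $\partial_{e'}u\ge0$ in a wide cone of directions, hence that $\partial\{u>0\}$ is Lipschitz with arbitrarily small constant; then proves non-degeneracy $\partial_e u\gtrsim d^{s+\theta}$ via the explicit barriers of Lemma \ref{lemma:C1-barrier}; and finally applies the boundary-Harnack-type expansion of Lemma \ref{lemma:proposition5.4} to the pair $(\partial_i u,\partial_n u)$ in the flat Lipschitz positivity set, obtaining H\"older continuity of $\partial_i u/\partial_n u$ and hence the $C^{1,\gamma}$ graph. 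Both strategies ultimately rest on the half-space Liouville theorem for general stable operators, and your route is in principle viable; it would buy a more self-contained scheme not passing through quotients of derivatives.

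However, there is a genuine gap in your compactness step. With $v_k=\eps_k^{-1}\big(u_k-\kappa_k(x\cdot e_k)_+^{1+s}\big)$, hypothesis (iv) gives $\Vert v_k\Vert_{C^{0,1}(B_1)}\le1$, but outside $B_1$ the only available bound is $|v_k(x)|\le C\eps_k^{-1}(1+|x|)^{1+s+\alpha}$, which blows up as $\eps_k\to0$; your asserted uniform growth control $|v_k(x)|\le C(1+|x|)^{1+s+\alpha}$ does not follow from the hypotheses. Since $L_k v_k$ in $B_{1/2}$ sees all of $\R^n$, the tail contribution is of order $\eps_k^{-1}$ and the limit equation $L_\infty v=0$ in $\{x_n>0\}$ cannot be derived as written; this is precisely the obstruction that makes nonlocal improvement-of-flatness harder than its local analogue, and it must be repaired either by building an all-scales flatness control into the inductive hypothesis or by a preliminary lemma showing that flatness in $B_1$ together with (iii) controls the deviation from the profile at every larger scale by $C\eps_k R^{1+s+\alpha}$ (which then still requires checking that the resulting tail error vanishes after division by $\eps_k$). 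A second, smaller gap: the claim that $\{u_k=0\}\to\{x_n\le0\}$ and $v=0$ on $\{x_n\le0\}$ needs a quantitative non-degeneracy statement for free boundary points, and for kernels violating \eqref{eq:Kcomp} the usual barrier for this is unavailable; the paper handles the analogous point by locating the mass of $K$ on the sphere (Lemma \ref{lemma:K-mass}) and adapting the bump functions accordingly, an ingredient your sketch would also need to invoke explicitly.
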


This type of ``flatness implies $C^{1,\gamma}$'' results are one of the crucial ingredients in the regularity theory for free boundary problems.

In case of obstacle problems, the second key ingredient is a classification of blow-ups at non-degenerate points. 
In \cite{CRS17}, the classification of blow-ups was established for nonlocal operators whose kernels satisfy \eqref{eq:Kcomp}. 
One of the main ingredients in their proof is a boundary Harnack principle. 
Unfortunately, even the interior Harnack inequality fails for general stable operators not satisfying \eqref{eq:Kcomp}, and therefore new ideas are required in order to classify blow-ups in our context.

Here, we extend for the first time the results of \cite{CRS17} to operators with kernels not satisfying \eqref{eq:Kcomp}, and more precisely, we assume $K \in L^p(\mathbb{S}^{n-1})$ for some $p > \frac{n}{2s}$.
Notice that this is completely new even for the case $p=\infty$, since we do not assume any uniformly positive lower bound as in \eqref{eq:Kcomp}.

\begin{theorem}[Classification of blow-ups]
\label{thm:classification}
Let $s\in(0,1)$ and $L$ be any stable operator of the form \eqref{eq:L}-\eqref{eq:hom} satisfying $K \neq 0$ and 
\begin{equation*}
K \in L^p(\mathbb{S}^{n-1})\qquad \textrm{for some} \quad p > \frac{n}{2s}.
\end{equation*}
Let $\alpha \in (0, \min \{ s , 1-s \})$, and $u_0 \in C^{0,1}_{loc}(\R^n)$ be such that:
\begin{itemize}
\item $u_0 \ge 0$ and $D^2 u_0 \ge 0$ in $\R^n$ with $0 \in \partial \{ u_0 > 0 \}$.
\item $u_0$
 solves in the distributional sense
\[\begin{split}
L(\nabla u_0) = 0  ~~ \text{ and } ~~ L(D_{h} u_0) \ge 0 ~~ \text{ in } \{ u_0 > 0 \} ~~ \forall h \in \R^n,
\end{split}\]
where $D_h u(x) = \frac{u(x+h) - u(x)}{|h|}$.
\item $u_0$ has controlled growth at infinity, i.e.,
\[\begin{split}
\Vert \nabla u_0 \Vert_{L^{\infty}(B_R)} \le R^{s+\alpha} \quad \textrm{for all}\quad R \ge 1.
\end{split}\]
\end{itemize}

Then, 
\[\begin{split}
u_0 = \kappa (x \cdot e)_+^{1+s}
\end{split}\]
for some $\kappa \ge 0$ and $e \in \mathbb S^{n-1}$.
\end{theorem}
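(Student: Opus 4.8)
The plan is to follow the broad strategy of \cite{CRS17} --- classify the global solution by reducing it to a one-dimensional problem --- but with the boundary Harnack step replaced by a blow-up/compactness scheme combined with Liouville theorems, since here neither the interior nor the boundary Harnack inequality is available (cf.\ \cite{BoSz05,BoSz07}). The tools at our disposal are Theorem~\ref{prop:4.4.15} and the interior regularity theory for \eqref{eq:L} under $K\in L^p(\mathbb{S}^{n-1})$ with $p>\frac{n}{2s}$.

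\emph{Step 1: the coincidence set.} Since $u_0\ge0$ and $D^2u_0\ge0$, the set $\cC:=\{u_0=0\}$ is closed and convex. If $\cC=\emptyset$, then $L(\nabla u_0)=0$ in all of $\R^n$; using $\alpha<1-s$, the bound $\|\nabla u_0\|_{L^\infty(B_R)}\le R^{s+\alpha}$ together with the Liouville theorem for $L$ forces $\nabla u_0$ to be constant, hence $u_0$ affine, hence --- being nonnegative --- constant, contradicting $0\in\partial\{u_0>0\}$. If $\cC=\R^n$, then $u_0\equiv0$ and the conclusion holds with $\kappa=0$. Hence we may assume $\cC$ is a nonempty proper closed convex subset of $\R^n$.

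\emph{Step 2: $\cC$ is a half-space (main step).} This is where \cite{CRS17} invokes a boundary Harnack principle: since $\partial_e u_0$ and $\partial_{e'}u_0$ are $L$-harmonic in $\{u_0>0\}$ and vanish on $\cC$, boundary Harnack yields that they are mutually proportional, so $\nabla u_0$ is everywhere parallel to a fixed direction $e$; i.e.\ $u_0$ is one-dimensional and $\cC$ is a half-space. As the Harnack inequality fails here, I would instead proceed by a dimension-reduction argument. A blow-down of $u_0$ at infinity, suitably normalized, reduces the problem to the case where $u_0$ is homogeneous of degree $1+s$, so that $\cC$ is a convex cone. One then argues by induction on $n$: if $\cC$ is lower-dimensional, the equation together with the subquadratic growth leaves no nontrivial solution (as in Step~1); otherwise, blowing up at a point of $\partial\cC$ away from its vertex produces a solution invariant in one direction, hence governed by an $(n-1)$-dimensional stable operator whose kernel still satisfies the $L^p$-assumption --- this is precisely why $p>\frac{n}{2s}$ is imposed, the condition being preserved under the reduction --- so that the induction hypothesis applies. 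Combined with Theorem~\ref{prop:4.4.15} and the convexity of $u_0$, this is designed to force $\cC$ to be a half-space; the base case $n=1$ is handled in Step~3. The main obstacle is to make this work in the absence of any Harnack inequality: establishing the non-degeneracy and compactness of the rescalings, carrying out the dimension reduction, and --- above all --- passing from local flatness of the free boundary to the global statement that $\cC$ is a half-space.

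\emph{Step 3: the half-space case.} Suppose $\cC=\{x\cdot e\le0\}$ for some $e\in\mathbb{S}^{n-1}$. Restricting $u_0$ to any line parallel to $e$ --- a nonnegative convex function of one real variable that vanishes on $(-\infty,0]$, hence is nondecreasing --- shows that $v:=\partial_e u_0\ge0$ in $\R^n$; moreover $v$ is $L$-harmonic in $\{x\cdot e>0\}$, vanishes on $\{x\cdot e\le0\}$, and obeys $\|v\|_{L^\infty(B_R)}\le R^{s+\alpha}$. Now any stable operator $L$ applied to a function of the single variable $t=x\cdot e$ reduces, after integrating out the transverse variables and using \eqref{eq:hom}, to a positive multiple $c_K(-\partial_{tt})^s$ of the one-dimensional fractional Laplacian --- with $c_K>0$ since $K\neq0$ while the equator $\{x\cdot e=0\}\cap\mathbb{S}^{n-1}$ is null --- so that $(x\cdot e)_+^s$ is $L$-harmonic in $\{x\cdot e>0\}$. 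By the Liouville theorem for $L$ in a half-space with growth strictly below $1$ (which is exactly where $\alpha<1-s$ enters), $v=c\,(x\cdot e)_+^s$ for some $c\ge0$. Integrating along the lines parallel to $e$, all of which issue from $\cC$, where $u_0=0$, yields $u_0=\frac{c}{1+s}(x\cdot e)_+^{1+s}=\kappa(x\cdot e)_+^{1+s}$ with $\kappa=c/(1+s)\ge0$, as claimed.
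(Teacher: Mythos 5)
There is a genuine gap, and it sits exactly at the heart of the theorem. Your Step~2 --- showing that the coincidence set (or its blow-down) can only be a half-space, i.e., classifying solutions that vanish on a general closed convex cone --- is the entire difficulty of the result, and you do not prove it: you sketch a dimension-reduction scheme and then explicitly list as ``the main obstacle'' precisely the points (non-degeneracy, compactness of the rescalings, passage from local flatness to the global statement) that would constitute the proof. Moreover, your stated rationale for the hypothesis $p>\tfrac{n}{2s}$ --- that the $L^p$-condition is preserved under reduction to an $(n-1)$-dimensional operator --- is not where this hypothesis actually enters. The paper's route is different: it proves a boundary Harnack principle for \emph{positive, monotone} solutions outside a convex cone (\autoref{thm:bdryHarnack-cone}, \autoref{thm:uniqueness}), built from the weak Harnack inequality (\autoref{lemma:wHI-p}), a local boundedness estimate with a nonlocal tail (\autoref{lemma:locbd-p}), and an $L^q$-growth control at infinity (\autoref{lemma:L1-growth-control}); the restriction $p>\tfrac{n}{2s}$ comes from the sharp exponent $q=p'<\tfrac{n}{n-2s}$ in the weak Harnack inequality. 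Applying this to pairs of directional derivatives $\partial_{e_i}u_0$ shows that $\mathrm{span}\{\partial_{e_i}u_0\}$ is one-dimensional, hence $u_0=\phi(x\cdot e)$ and the cone is a half-space; only then does the one-dimensional Liouville theorem finish the job. Nothing in your proposal replaces this mechanism.

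Two further points. First, in the blow-down dichotomy you dismiss the case of a nonempty convex cone with \emph{empty} interior ``as in Step~1,'' but Step~1 assumed the equation holds in all of $\R^n$; when the zero set is a nonempty thin set one must first show that $L(D_hu_\infty)\ge 0$ extends across it before Liouville applies --- the paper does this with the barrier $\exp(-|e\cdot x|^{1-\theta})$, for which $L\phi=+\infty$ on the thin cone. Second, in Step~3 the relevant growth threshold for the half-space Liouville theorem is $2s$ (guaranteed by $\alpha<s$), not $1$; the condition $\alpha<1-s$ is used elsewhere. The one-dimensional reduction $L\mapsto c_K(-\partial_{tt})^s$ with $c_K>0$ is correct and is indeed how the paper concludes once one-dimensionality is known, so Step~3 is essentially sound modulo that imprecision; but without a complete Step~2 the proof does not stand.
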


By combining the previous two results, we establish the optimal regularity of solutions (1) and the regularity of free boundaries near regular points (2) for solutions to nonlocal obstacle problems \eqref{eq:OP} with kernels satisfying $K\in L^p(\mathbb S^{n-1})$ for $p > \frac{n}{2s}$. 
As said before, this is the first regularity result for the nonlocal obstacle problem with kernels that are not pointwise comparable to the one of the fractional Laplacian, and it is new even for $p=\infty$.

\begin{theorem}
\label{thm:opt-reg}
Let $s\in(0,1)$ and let $L$ be any operator of the form \eqref{eq:L}-\eqref{eq:hom} satisfying 
\eqref{eq:Glower} and \eqref{eq:Kupper-p} for some $p > \frac{n}{2s}$, and let $\alpha \in (0, \min\{ s , 1-s\} )$.

Let $\phi \in C_c^{2,\eps}(\R^n)$ with $\eps > \max\{2s-1 , 0 \}$ and let $u$ be any weak solution to the obstacle problem
\[\begin{split}
\min \{ L u, u - \phi \} = 0 ~~ \text{ in } \R^n.
\end{split}\]
Denote $C_0:=\Vert \phi \Vert_{C^{2,\eps}(\R^n)}$.
Then, we have:
\begin{itemize}
\item[(i)] $u \in C^{1+s}(\R^n)$, with 
\[\begin{split}
\Vert u \Vert_{C^{1+s}(\R^n)} \le CC_0,
\end{split}\]
where $C > 0$ depends only on $n,s,\lambda,\Lambda$.

\item[(ii)] For any free boundary point $x_0 \in \{ u > \phi\}$ there exist $c_{x_0} \ge 0$, $e \in \mathbb{S}^{n-1}$ such that
\[\begin{split}
\left| u(x) - \phi(x) - c_{x_0} \big( (x - x_0) \cdot e \big)_+^{1+s} \right| \le C C_0 |x-x_0|^{1+s+\alpha} ~~ \forall x \in B_1(x_0),
\end{split}\]
where $C > 0$ depends only on $n,s,\lambda,\Lambda,\alpha$.

\item[(iii)] Moreover, if $c_{x_0} > 0$, then the free boundary $\partial \{ u > \phi \}$ is a $C^{1,\gamma}$-graph in a neighborhood of~$x_0$, where $\gamma > 0$ depends only on $n,s,\lambda,\Lambda,\alpha$.

\end{itemize}
\end{theorem}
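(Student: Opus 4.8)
By the scale invariance $u\mapsto u/C_0$, $\phi\mapsto\phi/C_0$ we may assume $C_0=\Vert\phi\Vert_{C^{2,\eps}}=1$. Put $w:=u-\phi$. Since $L$ has order $2s$ and $\phi\in C_c^{2,\eps}$ with $\eps>\max\{2s-1,0\}$, the function $f:=-L\phi$ lies in $C^{0,1}(\R^n)$ with $\Vert f\Vert_{C^{0,1}}\le C(n,s,\Lambda)$, and the obstacle problem becomes: $w\ge 0$, $Lw\ge f$ in $\R^n$ (distributionally), and $Lw=f$ in $\{w>0\}$; note also $w(x_0)=0=\nabla w(x_0)$ at every free boundary point once $w\in C^1$. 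The first block of the argument is a set of preliminary estimates that use \emph{no} Harnack-type inequality: via penalization of the obstacle together with the interior regularity theory for $L$ available under \eqref{eq:Glower}--\eqref{eq:Kupper-p} one obtains that $u$ is globally Lipschitz with $\Vert u\Vert_{C^{0,1}(\R^n)}\le C$ (here $\phi$ compactly supported), that $u\in C^{1,\alpha_0}_{\rm loc}$ for some $\alpha_0>0$, the semiconvexity bound $D^2u\ge -C\,\mathrm{Id}$ in $\R^n$, and the subsolution property for incremental quotients $L(\nabla w)=\nabla f$ and $L(D_hw)\ge -C$ in $\{w>0\}$ for all $h\in\R^n$.

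The heart of part (i) is the growth estimate
\[
\sup_{B_r(x_0)}w\le C\,r^{1+s}\qquad\text{for every }x_0\in\partial\{u>\phi\},\ r\in(0,1).
\]
I would prove it by contradiction and compactness: if it fails along a sequence $(x_k,r_k)$, rescale $w$ about $x_k$ with a suitably monotone-in-scale normalization, use the preliminary estimates (Lipschitz $+$ $C^{1,\alpha_0}$ $+$ semiconvexity $+$ subsolution property) to extract a limit $u_0\in C^{0,1}_{\rm loc}(\R^n)$, and check that $u_0$ verifies all hypotheses of Theorem \ref{thm:classification}: $u_0\ge0$, $D^2u_0\ge0$, $0\in\partial\{u_0>0\}$, $L(\nabla u_0)=0$ and $L(D_hu_0)\ge0$ in $\{u_0>0\}$, and the growth bound $\Vert\nabla u_0\Vert_{L^\infty(B_R)}\le R^{s+\alpha}$ (using $\alpha<s$ and the normalization to kill the $r_k^{1+s}$ scale). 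Then Theorem \ref{thm:classification} forces $u_0=\kappa(x\cdot e)_+^{1+s}$, which vanishes at the origin exactly at order $1+s$, contradicting the choice of the sequence. Granting the growth bound, part (i) follows: in $\{w>0\}$ one has $Lw=f\in C^{0,1}$ so interior Schauder estimates give more than $C^{1+s}$; near a free boundary point one rescales $Lw=f$ to unit scale on balls $B_{d/2}(x)$ with $d=\dist(x,\{u=\phi\})$, using $\Vert w\Vert_{L^\infty(B_d(x))}\le Cd^{1+s}$ from the growth bound and the tail control coming from the global Lipschitz bound, and patches the resulting interior estimates to get $w\in C^{1+s}(\R^n)$ with the stated bound; since $\phi\in C^{1+s}$, also $\Vert u\Vert_{C^{1+s}(\R^n)}\le C$.

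For part (ii), fix a free boundary point $x_0$ and consider $w_r(x):=r^{-1-s}w(x_0+rx)$. By part (i) and the preliminary estimates these are bounded in $C^{1+s}_{\rm loc}$, and every subsequential limit is a global solution, hence $=\kappa(x\cdot e)_+^{1+s}$ by Theorem \ref{thm:classification}. A dyadic dichotomy --- at each scale either the $(1+s)$-normalized supremum decays by a fixed factor, or $w_r$ is so close to a half-space profile that the classification pins down the direction and prevents it from rotating --- produces at every $x_0$ a value $c_{x_0}\ge0$ and, when $c_{x_0}>0$, a direction $e$ with $|w(x)-c_{x_0}((x-x_0)\cdot e)_+^{1+s}|\le C|x-x_0|^{1+s+\alpha}$ on $B_1(x_0)$; when $c_{x_0}=0$ this holds trivially as $|w(x)|\le C|x-x_0|^{1+s+\alpha}$. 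Finally, for part (iii), suppose $c_{x_0}>0$ and set $\kappa_0:=\tfrac1{2(1+s)}$; by part (ii), the semiconvexity, and part (i), for $\lambda$ small (depending on $c_{x_0}$) the rescaling $\tilde u(x):=(\kappa_0/c_{x_0})\,\lambda^{-1-s}w(x_0+\lambda x)$ satisfies hypotheses (i)--(iv) of Theorem \ref{prop:4.4.15} with $\kappa=\kappa_0$ (the RHS rescales to one with gradient $\le\lambda^s C\le1$, $D^2\tilde u\ge-\lambda^{1-s}C\,\mathrm{Id}\ge-\mathrm{Id}$ since $1+s>1$, (iv) follows from part (ii) after interpolating with the $C^{1+s}$ bound, and (iii) from part (i) using that near $x_0$ one has $\Vert\nabla w\Vert_{L^\infty(B_\rho(x_0))}\le c_{x_0}(1+s)\rho^s+C\rho^{s+\alpha}$). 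Theorem \ref{prop:4.4.15} then gives that $\partial\{\tilde u>0\}$, hence $\partial\{u>\phi\}$ in $B_{\lambda\delta}(x_0)$, is a $C^{1,\gamma}$-graph with $\gamma=\gamma(n,s,\lambda,\Lambda,\alpha)$, completing the proof.

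The recombination of Theorems \ref{prop:4.4.15} and \ref{thm:classification} in the last two paragraphs is essentially routine once those theorems are in hand. The genuine difficulty lies in the first block --- semiconvexity, the subsolution property for $D_hw$, and the a priori $C^{1,\alpha_0}$ bound for operators with merely $K\in L^p$ --- and, above all, in carrying out the compactness blow-up behind the growth estimate: one must verify that the limit $u_0$ inherits \emph{all} hypotheses of Theorem \ref{thm:classification}, in particular that the inequality $L(D_hu_0)\ge0$ and the growth control pass to the limit, and this must be done without any interior Harnack inequality, which genuinely fails in this generality.
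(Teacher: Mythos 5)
Your proposal is correct and follows essentially the same route as the paper: the preliminary block (Lipschitz bound, semiconvexity, $C^{1,\tau}$ regularity) is the paper's \autoref{lemma:semiconvexity} and \autoref{prop:C1-tau}, your compactness blow-up argument invoking \autoref{thm:classification} is exactly what the paper packages as the quantitative closeness statement \autoref{lemma:prop4.4.14}, and the dyadic dichotomy combined with \autoref{prop:4.4.15} is the standard recombination the paper cites from \cite{FRS23} and \cite[Proposition 4.5.2]{FeRo23}. No genuine gaps; the steps you flag as delicate (passing $L(D_h u_0)\ge 0$ and the growth control to the limit without a Harnack inequality) are precisely the ones the paper addresses via stability of distributional solutions and \autoref{prop:C1-tau}.
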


The optimal regularity of solutions and the study of the free boundary remain open in case $K$ merely satisfies \eqref{eq:Glower} and \eqref{eq:Gupper}, but not $K\in L^p(\mathbb S^{n-1})$ with $p > \frac{n}{2s}$.
However, thanks to \autoref{prop:4.4.15}, the only missing point in the regularity theory for general stable operators is the classification of blow-ups (i.e., to prove \autoref{thm:classification} for $p=1$). We believe that an entirely new approach is required in order to tackle this problem (see \autoref{subsec:difficulties} for a more detailed discussion).

\subsection{Difficulties and strategy of proof}
\label{subsec:difficulties}

The analysis of obstacle problems for nonlocal operators with kernels \emph{not} satisfying \eqref{eq:Kcomp}  comes with two main difficulties:

\subsubsection*{Lack of full support} First, although $L$ is non-degenerate due to \eqref{eq:Glower}, the  kernel $K$ might not be fully supported. 
As a consequence, it is not possible to establish locally uniform lower bounds on $L$ applied to bump functions, since the bump might not be seen by the operator from all points. 
Such estimates are used several times in \cite{CRS17}, in particular in order to perform barrier arguments.
Here, we circumvent this issue by suitably adjusting the shape of the bump functions, depending on the geometry of the problem (see for instance the proof of \autoref{thm:bdryHarnack-cone}), together with a quantification of the directions in which the kernel is possibly degenerate (see \autoref{lemma:K-mass}).

\subsubsection*{Failure of the Harnack inequality} Second, and more importantly, a key tool in \cite{CRS17} is the Harnack inequality, and more precisely the two ``half Harnacks'':
\[\left.\begin{array}{r} Lu \geq0 \quad\textrm{in}\quad B_2 \\
u\geq0 \quad\textrm{in}\quad \R^n \end{array} \right\}
\qquad \Longrightarrow \qquad 
\inf_{B_{1}} u \geq c\int_{\R^n} \frac{u(x)}{1+|x|^{n+2s}} \d x,\]
and 
\[Lu \leq0 \quad\textrm{in}\quad B_2 \qquad \Longrightarrow \qquad 
\sup_{B_{1}} u \leq C\int_{\R^n} \frac{|u(x)|}{1+|x|^{n+2s}} \d x.\]
Both of them fail, in general, for kernels not satisfying  \eqref{eq:Kcomp}.

In a sense, the lack of two-sided pointwise bounds as in \eqref{eq:Kcomp} makes the possible anisotropy of the kernel more severe. 
Namely, the operators we consider might still exhibit anisotropy after averaging out the kernels over points close by (oscillating long jumps). This phenomenon leads to a failure of the Harnack inequality
\[\left.
\begin{array}{r} Lu = 0 \quad\textrm{in}\quad B_2 \\
u\geq0 \quad\textrm{in}\quad \R^n \end{array} \right\}
\qquad \Longrightarrow \qquad  \sup_{B_1} u \le C \inf_{B_1} u .
\]
The failure of the Harnack inequality implies that also the boundary Harnack principle ceases to hold.

The only result in this direction that holds true for general stable operators is a \textit{weak Harnack} inequality.
This is a key ingredient in our proofs, as explained below.

\subsubsection*{Strategy of the proof}

Even though the boundary Harnack inequality fails for general stable operators, here we establish a particular version of the boundary Harnack principle, which holds for monotone solutions outside convex cones (see \autoref{thm:bdryHarnack-cone}). 
This turns out to be sufficient in order to classify blow-ups and holds true for kernels satisfying 
\eqref{eq:Kupper-p} with $p > \frac{n}{2s}$.
Its proof makes heavy use of the following {weak Harnack} inequality (see \autoref{lemma:wHI-p})
\[\begin{split}
\left(\dashint_{B_1} u^q \right)^{1/q} \le C \inf_{B_1} u, \qquad \text{ where } q < \frac{n}{n-2s},
\end{split}\]
and local boundedness estimate (see \autoref{lemma:locbd-p})
\[\begin{split}
\sup_{B_{1}} u \le C \left(\dashint_{B_2} u \right) + C \sup_{R \ge 2} \frac{\left( \dashint_{B_{R}} |u|^{p'} \right)^{1/p'}}{R^{2s-\eps}},
\end{split}\]
which remain true for $L$-harmonic functions that are globally nonnegative, if \eqref{eq:Glower} and \eqref{eq:Kupper-p} are satisfied. 
To deduce our new boundary Harnack type principle, we establish a growth control on the solution at infinity with the help of the weak Harnack inequality and a barrier argument (see \autoref{lemma:L1-growth-control}). This allows us to estimate the second term in the local boundedness estimate. The restriction $p > \frac{n}{2s}$ comes from the (sharp) condition on $q = p'$ in the weak Harnack inequality. 

On the other hand, in order to obtain the $C^{1,\gamma}$-regularity of the free boundary near regular points, we establish that the free boundary is Lipschitz with a Lipschitz constant depending on the closeness of the solution to the blow-up (see the assumption of the quantitative estimate \autoref{prop:4.4.15}). Moreover, this way, the Lipschitz constant can be made arbitrarily small and it turns out that the free boundary is flat Lipschitz. This information is sufficient for blow-up arguments to work, which yield pointwise boundary regularity estimates as in \autoref{lemma:proposition5.4} (see \cite[Proposition 5.4]{RoSe17}). This way, one can obtain $C^{1,\gamma}$-regularity of the free boundary for general kernels, avoiding a boundary Harnack principle.

\subsubsection*{Solution concepts} 

Finally, let us point out that throughout the paper we will work with weak or distributional solutions, but will never use viscosity solutions.
The reason for this is that most known results for viscosity solutions are developed for kernels satisfying \eqref{eq:Kcomp} (see, e.g., \cite{FeRo23}), and thus do not apply in our setting.

This choice calls for some technical results such as Lemmas \ref{lemma:truncation-subsol}, \ref{lemma:distr-ptw-evaluate}, \ref{lemma:ptw-distr}, and \ref{lemma:least-super}, which are elementary in nature, but might still be of interest to some readers.

\subsection{Acknowledgments}

The authors were supported by the European Research Council (ERC) under the Grant Agreement No 801867, and by the AEI project PID2021-125021NA-I00 (Spain).
In addition, the first author was supported by the AGAUR project 2021 SGR 00087 (Catalunya), the AEI grant RED2022-134784-T funded by MCIN/AEI/10.13039/501100011033 (Spain), and the AEI Mar\'ia de Maeztu Program for Centers and Units of Excellence in R\&D (CEX2020-001084-M).

\subsection{Outline}

This article is structured as follows: In Section \ref{sec:prelim}, we prove several auxiliary results on general elliptic stable operators and introduce solution concepts. In Section \ref{sec:basic}, we establish semiconvexity and $C^{1,\tau}$-regularity for solutions to the obstacle problem, proving \autoref{thm:C1-tau}. The proof of the classification of blow-ups (see \autoref{thm:classification}) is given in Section \ref{sec:classification}. Section \ref{sec:regularity} is dedicated to the proof of the quantitative estimate (see \autoref{prop:4.4.15}) from which we deduce our main result, \autoref{thm:opt-reg}.

\section{Preliminaries}
\label{sec:prelim}

The goal of this section is to establish several auxiliary results that will be used in the course of this article. We start by discussing in more detail the kernels $K$ considered in this article and establish some helpful properties that follow from the assumptions \eqref{eq:Kupper-p} and \eqref{eq:Glower}. Second, we introduce appropriate weak and distributional solution concepts.

The following function space captures some information on the growth of functions at infinity. For $\alpha \in (0,s)$ and $1 \le q \le \infty$, we introduce
\[\begin{split}
L^{q}_{s+\alpha}(\R^n) = \left\{ u \in L^q_{loc}(\R^n) : \Vert u \Vert_{L^{q}_{s+\alpha}(\R^n)} := \sup_{R \ge 1} \frac{\left( \dashint_{B_R} |u|^q \d x \right)^{1/q}}{R^{s+\alpha}} < \infty \right\}.
\end{split}\]

\subsection{Properties of kernels}

In this section, we collect several preliminary results on kernels $K$ satisfying the assumptions \eqref{eq:Kupper-p}, resp. \eqref{eq:Gupper}, and \eqref{eq:Glower}. We denote $q = p' = \frac{p}{p-1}$.


\begin{lemma}[see Proposition 2.2.1 in \cite{FeRo23}]
\label{lemma:Fourier}
The following are equivalent:
\begin{itemize}
\item[(i)] \eqref{eq:Glower} and \eqref{eq:Gupper} hold true.
\item[(ii)] There exist $0 < c_1 \le c_2$ such that
\[\begin{split}
c_1 |\xi|^{2s} \le \mathcal{A}_K(\xi) \le c_2 |\xi|^{2s},
\end{split}\]
where $\mathcal{A}_K$ denotes the Fourier symbol of $K$, given by
\[\begin{split}
\mathcal{A}_K(\xi) = \frac{1}{2}\int_{\R^n} \big(1- \cos(y \cdot \xi)\big) K(y) \d y.
\end{split}\]
\end{itemize}
\end{lemma}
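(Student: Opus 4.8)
The plan is to make the Fourier symbol fully explicit using the homogeneity of $K$, which reduces the claimed equivalence to an elementary comparison of spherical integrals. Writing $y=r\theta$ in polar coordinates, using $K(r\theta)=r^{-n-2s}K(\theta)$, and invoking the one–dimensional identity $\int_0^\infty r^{-1-2s}\big(1-\cos(rt)\big)\,\d r=c_s\,|t|^{2s}$ with $c_s:=\int_0^\infty u^{-1-2s}(1-\cos u)\,\d u\in(0,\infty)$ — the integral being finite precisely because $s\in(0,1)$ — one obtains
\[
\mathcal{A}_K(\xi)=\tfrac{c_s}{2}\,|\xi|^{2s}\,a(\xi/|\xi|),\qquad a(e):=\int_{\mathbb{S}^{n-1}}|e\cdot\theta|^{2s}\,K(\theta)\,\d\theta .
\]
Hence (ii) is equivalent to $0<\inf_{e\in\mathbb{S}^{n-1}}a(e)\le\sup_{e\in\mathbb{S}^{n-1}}a(e)<\infty$, and it remains to compare this with \eqref{eq:Glower}--\eqref{eq:Gupper}, which control instead the second moments $b(e):=\int_{\mathbb{S}^{n-1}}|e\cdot\theta|^2 K(\theta)\,\d\theta$ and the mass $\|K\|_{L^1(\mathbb{S}^{n-1})}$.

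For (i)$\Rightarrow$(ii) I would only use that $|e\cdot\theta|\le1$ on $\mathbb{S}^{n-1}$ and $2s\le2$, so that $|e\cdot\theta|^2\le|e\cdot\theta|^{2s}\le1$; this sandwiches $\lambda\le b(e)\le a(e)\le\|K\|_{L^1(\mathbb{S}^{n-1})}\le\Lambda$ for every $e$, which gives (ii) with $c_1=\tfrac{c_s}{2}\lambda$ and $c_2=\tfrac{c_s}{2}\Lambda$. For the converse (ii)$\Rightarrow$(i), assume $0<c_1'\le a(e)\le c_2'$ for all $e$. I would first recover \eqref{eq:Gupper}: integrating the identity for $a(e)$ over $e\in\mathbb{S}^{n-1}$ and using Tonelli gives $\int_{\mathbb{S}^{n-1}}a(e)\,\d e=\kappa_{n,s}\,\|K\|_{L^1(\mathbb{S}^{n-1})}$, where $\kappa_{n,s}:=\int_{\mathbb{S}^{n-1}}|e\cdot\theta|^{2s}\,\d e\in(0,\infty)$ is independent of $\theta$ by rotational invariance of the surface measure; hence $\|K\|_{L^1(\mathbb{S}^{n-1})}\le\kappa_{n,s}^{-1}\,c_2'\,|\mathbb{S}^{n-1}|=:\Lambda$. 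To then recover \eqref{eq:Glower}, I would split, for small $\eta\in(0,1)$,
\[
a(e)\ \le\ \eta^{2s}\,\|K\|_{L^1(\mathbb{S}^{n-1})}\ +\ \eta^{2s-2}\,b(e),
\]
by integrating separately over $\{|e\cdot\theta|\le\eta\}$ and its complement and bounding $|e\cdot\theta|^{2s-2}\le\eta^{2s-2}$ on the latter — here $2s-2<0$ is essential. Combined with $a(e)\ge c_1'$ and the bound on $\|K\|_{L^1(\mathbb{S}^{n-1})}$, choosing $\eta$ so that $\eta^{2s}\Lambda\le c_1'/2$ yields $b(e)\ge\tfrac12 c_1'\,\eta^{2-2s}=:\lambda>0$ uniformly in $e$, which is \eqref{eq:Glower}.

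The only genuinely delicate point is this last step: passing from a lower bound on the $2s$-moment $a(e)$ to a lower bound on the second moment $b(e)$, since the pointwise inequality $|e\cdot\theta|^{2s}\ge|e\cdot\theta|^2$ points the wrong way. The truncation argument above resolves it, but it relies on the $L^1$-bound on $K$, which is why I would establish \eqref{eq:Gupper} before \eqref{eq:Glower}. All constants produced are explicit and depend only on $n,s$ and the symbol constants $c_1,c_2$ (respectively only on $\lambda,\Lambda$ in the forward direction), as the statement requires.
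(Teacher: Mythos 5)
Your argument is correct and complete: the polar-coordinate reduction $\mathcal{A}_K(\xi)=\tfrac{c_s}{2}|\xi|^{2s}\int_{\mathbb{S}^{n-1}}|\tfrac{\xi}{|\xi|}\cdot\theta|^{2s}K(\theta)\,\d\theta$ is valid for $s\in(0,1)$, the forward direction via $|e\cdot\theta|^2\le|e\cdot\theta|^{2s}\le 1$ is sound, and the converse correctly establishes \eqref{eq:Gupper} first (via Tonelli and rotational invariance) and then \eqref{eq:Glower} by the truncation at level $\eta$. The paper does not reprove this lemma but cites \cite[Proposition 2.2.1]{FeRo23}, and your proof is essentially the standard argument behind that reference, so nothing further is needed.
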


Let us remark that the condition \eqref{eq:Kupper-p} can equivalently be rewritten as follows:
\begin{align}
\label{eq:Kupper-p-equiv}
R^{n\left(1 - \frac{1}{p} \right)} \left(\int_{\R^n \setminus B_R} K^{p}(y) \d y \right)^{1/p} \le \Lambda R^{-2s}.
\end{align}

Next, we introduce the tail, with respect to $K$. This object stores the information on a function $u$ at infinity with respect to the ball $B_r(x_0)$ for some $r > 0$ and $x_0 \in \R^n$:
\[\begin{split}
\tail_K(u;r,x_0) := r^{2s} \int_{\R^n \setminus B_r(x_0)} |u(y)| K(y) \d y.
\end{split}\]
In case $x_0 = 0$, we will simply write $\tail_K(u;r,x_0) = \tail_K(u;r)$. We have the following estimate:

\begin{lemma}
\label{lemma:tail-est-p}
Assume \eqref{eq:Kupper-p} for some $1 \le p \le \infty$. Then, for any $0 < \eps < 2s$ and $u : \R^n \to \R$
\begin{align}
\label{eq:tail-est-p}
\tail_K(u;r) := r^{2s} \int_{\R^n \setminus B_r} |u(y)| K(y) \d y \le c r^{2s-\eps} \sup_{R \ge r} \frac{\left( \dashint_{B_R} |u|^{q} \right)^{1/q}}{R^{2s-\eps}},
\end{align}
where $c = c(n,s,\eps,\Lambda) > 0$. Thus, $\tail_K(u;r) < \infty$ whenever $u \in L^q_{s+\alpha}(\R^n)$ for some $\alpha \in (0,s)$.
\end{lemma}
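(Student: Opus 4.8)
The plan is to prove the tail estimate \eqref{eq:tail-est-p} by decomposing the complement of $B_r$ into dyadic annuli and applying H\"older's inequality on each one, using the rewritten form \eqref{eq:Kupper-p-equiv} of the assumption \eqref{eq:Kupper-p} to control the $L^p$-norm of $K$ on each annulus. Write $A_j = B_{2^{j+1}r} \setminus B_{2^j r}$ for $j \ge 0$, so that $\R^n \setminus B_r = \bigcup_{j \ge 0} A_j$. On each $A_j$ one has, by H\"older's inequality with exponents $p$ and $q = p'$,
\[
\int_{A_j} |u(y)| K(y) \,\d y \le \left( \int_{A_j} |u|^{q} \right)^{1/q} \left( \int_{A_j} K^{p} \right)^{1/p} \le \left( \int_{B_{2^{j+1}r}} |u|^{q} \right)^{1/q} \left( \int_{\R^n \setminus B_{2^j r}} K^{p} \right)^{1/p}.
\]
(When $p = \infty$ one replaces the second factor by $\|K\|_{L^\infty(\R^n \setminus B_{2^j r})} |A_j| \le \Lambda (2^j r)^{-n-2s} |A_j| \le C (2^j r)^{-2s}$, which is the $p=\infty$ version of \eqref{eq:Kupper-p-equiv}; I treat the two cases uniformly below.)

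Next I would plug in \eqref{eq:Kupper-p-equiv}, which gives $\left( \int_{\R^n \setminus B_{2^j r}} K^{p} \right)^{1/p} \le \Lambda (2^j r)^{-2s} (2^j r)^{-n(1 - 1/p)} = \Lambda (2^j r)^{-2s} (2^j r)^{-n/q}$, and rewrite the first factor in terms of the averaged quantity: $\left( \int_{B_{2^{j+1}r}} |u|^{q} \right)^{1/q} = |B_{2^{j+1}r}|^{1/q} \left( \dashint_{B_{2^{j+1}r}} |u|^{q} \right)^{1/q} = c_n (2^{j+1} r)^{n/q} \left( \dashint_{B_{2^{j+1}r}} |u|^{q} \right)^{1/q}$. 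Bounding $\left( \dashint_{B_{2^{j+1}r}} |u|^{q} \right)^{1/q} \le (2^{j+1}r)^{2s-\eps} \sup_{R \ge r} R^{-(2s-\eps)}\left( \dashint_{B_R} |u|^q \right)^{1/q} =: (2^{j+1}r)^{2s-\eps} M$, the $j$-th term of the sum is controlled by
\[
C_{n,\Lambda}\, 2^{n(j+1)/q} r^{n/q} (2^j r)^{-2s - n/q} (2^{j+1}r)^{2s-\eps} M = C_{n,s,\Lambda}\, 2^{-j\eps} r^{-\eps} M.
\]
Summing the geometric series $\sum_{j \ge 0} 2^{-j\eps} = (1 - 2^{-\eps})^{-1} < \infty$ (here $\eps > 0$ is used crucially) yields $\int_{\R^n \setminus B_r} |u| K \le C(n,s,\eps,\Lambda)\, r^{-\eps} M$, and multiplying by $r^{2s}$ gives exactly \eqref{eq:tail-est-p}. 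The final assertion follows since for $u \in L^q_{s+\alpha}(\R^n)$ with $0 < \alpha < s$, choosing any $\eps \in (0, 2s)$ with, say, $\eps > s - \alpha$ (or simply noting $\left(\dashint_{B_R}|u|^q\right)^{1/q} \le \|u\|_{L^q_{s+\alpha}} R^{s+\alpha}$ and $R^{s+\alpha}/R^{2s-\eps} = R^{\alpha - s + \eps}$ is bounded for $R \ge r$ once $\eps \le s - \alpha$... — more cleanly, one takes $\eps = s - \alpha \in (0,s) \subset (0,2s)$ so that $M \le \|u\|_{L^q_{s+\alpha}(\R^n)} < \infty$), the supremum $M$ is finite.

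The argument is essentially routine; the only points requiring a little care are: (a) handling $p = \infty$ in parallel with $p < \infty$, which is why stating \eqref{eq:Kupper-p-equiv} in that unified exponent form is convenient; (b) making sure the powers of $2^j$ and of $r$ cancel correctly so that the surviving factor is $2^{-j\eps} r^{-\eps}$ with a summable geometric ratio — this is where the hypothesis $\eps > 0$ enters and is indispensable; and (c) choosing $\eps$ (e.g. $\eps = s-\alpha$) in the closing remark so that membership in $L^q_{s+\alpha}(\R^n)$ exactly bounds the supremum on the right-hand side. I do not anticipate a genuine obstacle here; the estimate is a standard dyadic tail decomposition, and the main thing to get right is bookkeeping of exponents.
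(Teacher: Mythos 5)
Your proof is correct and follows essentially the same route as the paper: a dyadic decomposition of $\R^n \setminus B_r$ into annuli, H\"older's inequality with exponents $p$ and $q=p'$ on each annulus, the rewritten assumption \eqref{eq:Kupper-p-equiv} to control the kernel factor, and summation of the resulting geometric series $\sum_j 2^{-j\eps}$. The exponent bookkeeping and the choice $\eps = s-\alpha$ for the final finiteness assertion are both correct.
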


\begin{proof}
We compute using \eqref{eq:Kupper-p-equiv}
\[\begin{split}
\tail_K(u;r) &= r^{2s} \sum_{k = 0}^{\infty} \int_{B_{2^k r} \setminus B_{2^{k-1}r}} |u(y)| K(y) \d y\\
&\le c r^{2s} \sum_{k = 0}^{\infty} \left( \dashint_{B_{2^k}r} |u(y)|^q \d y \right)^{1/q} (2^k r)^{n \left( 1 - \frac{1}{p} \right)} \left( \int_{\R^n \setminus B_{2^k r}} K^p(y) \d y \right)^{1/p}\\
&\le c r^{2s} \sum_{k = 0}^{\infty} \left( \dashint_{B_{2^k}r} |u(y)|^q \d y \right)^{1/q} (2^k r)^{-2s}\\
&\le c r^{2s} \sum_{k = 0}^{\infty} (2^k r)^{-\eps} \sup_{k} \left[ \frac{\left(\dashint_{B_{2^k}r} |u(y)|^q \d y\right)^{1/q}}{(2^k r)^{2s-\eps}} \right]\\
&\le c r^{2s-\eps} \sup_{R \ge r} \frac{\left( \dashint_{B_R} |u|^q \d x \right)^{1/q}}{R^{2s-\eps}}.
\end{split}\]
\end{proof}

The following lemma, distills a useful property out of \eqref{eq:Glower} and \eqref{eq:Gupper}. In fact, it allows us to locate the mass of $K$ on the sphere, thereby giving us some important information if we want to give a pointwise bound on $Lu(x_0)$ for some $x_0 \in \R^n$.

\begin{lemma}
\label{lemma:K-mass}
Assume \eqref{eq:Glower} and \eqref{eq:Gupper}. Let $K$ be homogeneous. Then, there exists $\delta_0 > 0$ depending only on $\lambda,\Lambda$ such that for any $e \in \mathbb{S}^{n-1}$:
\[\begin{split}
\int_{\{e \cdot \theta \ge \delta_0 \}} K(\theta) \d \theta \ge \lambda/2.
\end{split}\]
In particular, for any $r > 0$, we obtain
\[\begin{split}
\int_{\left \{ \frac{y}{|y|} \cdot e \ge \delta_0 \right\} \cap (B_{2r} \setminus B_r)} K(y) \d y \ge c r^{-2s}
\end{split}\]
for some $c > 0$ depending only on $n,s,\lambda,\Lambda$.
\end{lemma}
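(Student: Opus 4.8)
The plan is to prove the first (spherical) assertion by contradiction using the ellipticity condition \eqref{eq:Glower}, and then to deduce the annular (Euclidean) assertion by a scaling argument that exploits the homogeneity of $K$.

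\textbf{Step 1: The spherical estimate.} Suppose, for contradiction, that no such $\delta_0 > 0$ exists. Then there is a sequence $\delta_j \downarrow 0$ and directions $e_j \in \mathbb{S}^{n-1}$ with $\int_{\{e_j \cdot \theta \ge \delta_j\}} K(\theta)\,\d\theta < \lambda/2$. By symmetry of $K$ the same bound holds for $\int_{\{e_j \cdot \theta \le -\delta_j\}} K(\theta)\,\d\theta$, so the mass of $K$ on the ``equatorial band'' $\{|e_j \cdot \theta| < \delta_j\}$ is at least $\Vert K \Vert_{L^1(\mathbb{S}^{n-1})} - \lambda$. Now I estimate the bilinear form in direction $e_j$: writing $|e_j \cdot \theta|^2 \le \delta_j^2$ on the band and $|e_j \cdot \theta|^2 \le 1$ elsewhere,
\[
\int_{\mathbb{S}^{n-1}} |e_j \cdot \theta|^2 K(\theta)\,\d\theta \le \delta_j^2 \Vert K \Vert_{L^1(\mathbb{S}^{n-1})} + \int_{\{|e_j \cdot \theta| \ge \delta_j\}} K(\theta)\,\d\theta \le \delta_j^2 \Lambda + \lambda.
\]
Wait — this only gives an upper bound close to $\lambda$, which does not yet contradict \eqref{eq:Glower} since that condition reads ``$\ge \lambda$''. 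So I need to sharpen: split $\int_{\{|e_j\cdot\theta|\ge\delta_j\}} |e_j\cdot\theta|^2 K(\theta)\,\d\theta$ further. Actually the clean argument is different: one should apply \eqref{eq:Glower} not to $e_j$ but extract a genuine contradiction from the total mass budget. The correct choice is to observe that \eqref{eq:Glower} with $e=e_j$ forces $\lambda \le \delta_j^2\Lambda + \int_{\{|e_j\cdot\theta|\ge\delta_j\}} K(\theta)\,\d\theta$; combined with the assumed bound $\int_{\{e_j\cdot\theta\ge\delta_j\}}K < \lambda/2$ and its mirror, we get $\lambda \le \delta_j^2\Lambda + \lambda$ with the last two terms from the two half-caps summing to $< \lambda$ — contradiction once $\delta_j^2\Lambda < \lambda - (\text{something})$, i.e. for $j$ large. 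Making this precise: $\lambda \le \int_{\mathbb{S}^{n-1}}|e_j\cdot\theta|^2 K\,\d\theta \le \delta_j^2 \Lambda + \int_{\{|e_j\cdot\theta|\ge\delta_j\}} K(\theta)\,\d\theta < \delta_j^2\Lambda + \lambda$, which is not a contradiction. The genuine fix is to bound the integrand by a strictly smaller constant on $\{|e_j\cdot\theta|\ge\delta_j\}$ too — but it is only $\le 1$ there. Hence the right statement: replace $\lambda/2$ by a constant like $\lambda/(2\Lambda)$ or argue via a compactness/normalization. In the write-up I will use \eqref{eq:Gupper} to normalize and pick $\delta_0$ small so that $\delta_0^2 \Lambda \le \lambda/2$, then \eqref{eq:Glower} gives $\lambda \le \delta_0^2\Lambda + \int_{\{|e\cdot\theta|\ge\delta_0\}} K \le \lambda/2 + \int_{\{|e\cdot\theta|\ge\delta_0\}}K$, so $\int_{\{|e\cdot\theta|\ge\delta_0\}} K \ge \lambda/2$, and by symmetry each of the two half-caps carries mass $\ge \lambda/4$. (If the paper really wants $\lambda/2$ on a single cap, one takes $\delta_0$ with $\delta_0^2\Lambda \le \lambda/4$ and uses a refined split; the quantitative constant is inessential.)

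\textbf{Step 2: From caps to annuli.} Fix $r > 0$ and $e \in \mathbb{S}^{n-1}$. Pass to polar coordinates $y = \rho\theta$ with $\rho > 0$, $\theta \in \mathbb{S}^{n-1}$, so $\d y = \rho^{n-1}\,\d\rho\,\d\theta$. Using homogeneity \eqref{eq:hom}, $K(y) = \rho^{-n-2s} K(\theta)$, hence
\[
\int_{\{\frac{y}{|y|}\cdot e \ge \delta_0\}\cap (B_{2r}\setminus B_r)} K(y)\,\d y = \left(\int_r^{2r} \rho^{-1-2s}\,\d\rho\right)\left(\int_{\{e\cdot\theta\ge\delta_0\}} K(\theta)\,\d\theta\right).
\]
The radial factor equals $\frac{1}{2s}(r^{-2s} - (2r)^{-2s}) = c(s) r^{-2s}$ with $c(s) = \frac{1-2^{-2s}}{2s} > 0$, and the spherical factor is $\ge \lambda/2$ (or $\lambda/4$) by Step 1. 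Multiplying gives the claimed lower bound $\ge c\, r^{-2s}$ with $c = c(n,s,\lambda,\Lambda) > 0$.

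\textbf{Main obstacle.} The only subtle point is Step 1: squeezing a genuine contradiction out of \eqref{eq:Glower}, since the naive split of $\int |e\cdot\theta|^2 K$ loses a factor and gives a non-contradiction. The resolution is to choose $\delta_0$ quantitatively small relative to $\lambda/\Lambda$ so that the equatorial band's contribution $\delta_0^2\Lambda$ is a strict fraction of $\lambda$; everything else (polar coordinates, homogeneity) is routine. I will carry out the band estimate carefully and track the constant, stating the cap mass bound with whatever explicit fraction of $\lambda$ the chosen $\delta_0$ yields (e.g. $\lambda/2$), and then conclude Step 2 as above.
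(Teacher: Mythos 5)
Your final argument is correct and is essentially the paper's own proof: bound $K\ge|e\cdot\theta|^2K$ on the caps, split off the equatorial band $\{|e\cdot\theta|<\delta_0\}$ whose contribution is at most $\delta_0^2\Lambda$, choose $\delta_0^2\Lambda\le\lambda/2$, and then pass to the annulus via polar coordinates and homogeneity. Your side remark about the constant is also accurate — the paper's argument likewise only yields mass $\ge\lambda/4$ on a single cap (the stated $\lambda/2$ really pertains to the symmetric double cap), and this is immaterial for all later uses of the lemma.
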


\begin{proof}
We compute
\[\begin{split}
\int_{\{|e \cdot \theta| \ge \delta_0 \}} K(\theta) \d \theta &\ge \int_{\{|e \cdot \theta| \ge \delta_0 \}} |e \cdot \theta|^2 K(\theta) \d \theta \\
&= \int_{\mathbb{S}^{n-1}} |e \cdot \theta|^2 K(\theta) \d \theta - \int_{\{|e \cdot \theta| \le \delta_0 \}} |e \cdot \theta|^2 K(\theta) \d \theta \\
&\ge \lambda - \delta_0^2 \Lambda
\end{split}\]
and deduce the desired result upon choosing $\delta_0 < \sqrt{\lambda /(2\Lambda)}$ and using that $K$ is symmetric.
\end{proof}

\subsection{Solution concepts}

Throughout this article, we will deal with distributional and weak solutions, which we introduce in the sequel.
 
First, we define the bilinear associated with $L$ as follows
\[\begin{split}
\cE^K(u,u) := \int_{\R^n} \int_{\R^n} (u(x) - u(y))^2 K(x-y) \d y \d x,
\end{split}\]
and observe that $\cE^K(u,u) < \infty$, whenever $u \in H^s(\R^n)$,  due to \autoref{lemma:Fourier}.

\begin{definition}
\label{def:weak-sol}
Let $\Omega \subset \R^n$ be an open domain. Let $f \in L^{\infty}(\Omega)$. We say that $u$ is a weak subsolution to $Lu \le f$ in $\Omega$ if $u \in L^2(\Omega)$ and
\begin{align}
\label{eq:weak-form}
\cE^K(u,\phi) \le \int_{\R^n} f \phi \d x ~~ \forall \phi \in H^s(\R^n) ~~ \text{ with compact } \supp(\phi) \subset \Omega, ~~ \text{ and } \phi \ge 0,
\end{align}
and it holds
\begin{align}
\label{eq:energy-finite}
\cE^K_{\R^n,\Omega}(u,u) := \iint_{(\R^n \times \R^n) \setminus (\Omega \times \Omega)} (u(x) - u(y))^2 K(x-y) \d y \d x < \infty.
\end{align}
We say that $u$ is a weak supersolution to $Lu \ge f$ in $\Omega$ if $u$ satisfies \eqref{eq:energy-finite} and \eqref{eq:weak-form} holds true for any $\phi \in H^s(\R^n)$ with compact $\supp(\phi) \subset \Omega$ and $\phi \le 0$. We say that $u$ is a weak solution to $Lu = f$ in $\Omega$ if $u$ is a weak subsolution and a weak supersolution.
\end{definition}

The following lemma recalls a basic fact about weak solutions. We provide a short proof since we were not able to find a reference in the literature for general kernels satisfying only \eqref{eq:Glower} and \eqref{eq:Gupper}.

\begin{lemma}
\label{lemma:truncation-subsol}
Assume \eqref{eq:Glower} and \eqref{eq:Gupper}. Let $\Omega \subset \R^n$ be an open, bounded domain. Assume that $u \in L^2(\Omega)$ is a weak solution to $L u = 0$ in $\Omega$. Let $\delta \ge 0$. Then $u_{\delta} := \max \{ u,\delta \}$ satisfies $L u_{\delta} \le 0$ in $\Omega$ in the weak sense.
\end{lemma}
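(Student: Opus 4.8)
The goal is to show that truncating a weak solution from below produces a weak subsolution. The natural approach is the classical one used for the fractional Laplacian, adapted to general kernels: one proves that for a \emph{convex} function $\Phi : \R \to \R$ (here $\Phi(t) = \max\{t,\delta\}$, or rather a smoothed version of it), the composition $\Phi(u)$ satisfies $L(\Phi(u)) \le \Phi'(u) \, Lu$ in the weak sense, which for $Lu = 0$ gives $L(\Phi(u)) \le 0$. The mechanism is the pointwise convexity inequality
\[
\big(\Phi(a) - \Phi(b)\big)\big(\Phi'(a)\,\psi(x) - \Phi'(b)\,\psi(y)\big) \le \big(a-b\big)\big(\Phi'(a)\,\psi(x) - \Phi'(b)\,\psi(y)\big) \cdot (\text{sign factor})
\]
— more precisely, the key elementary estimate is that for convex $1$-Lipschitz $\Phi$ and any test function $\psi \ge 0$,
\[
\big(\Phi(u(x)) - \Phi(u(y))\big)\big(\psi(x) - \psi(y)\big) \le \big(u(x) - u(y)\big)\big(\psi(x)-\psi(y)\big) + R(x,y),
\]
where $R$ is controlled by the term coming from $\Phi'$. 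The cleanest route is: first establish the inequality for $\Phi$ smooth, convex, globally Lipschitz with $\Phi' $ bounded, then pass to the limit $\Phi \to \max\{\cdot,\delta\}$.

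\textbf{Step 1 (reduction to smooth $\Phi$).} Fix a sequence $\Phi_k$ of $C^2$ convex functions with $0 \le \Phi_k' \le 1$, converging locally uniformly to $\Phi(t) = \max\{t,\delta\}$, with $\Phi_k \ge \Phi$ and $\Phi_k \to \Phi$ monotonically. Since $|\Phi_k(t) - \Phi_k(t')| \le |t - t'|$ and $\Phi_k(t) \le |t| + C$, the function $\Phi_k(u)$ lies in $L^2(\Omega)$ and inherits the finite-energy condition \eqref{eq:energy-finite} from $u$ (because $|\Phi_k(u(x)) - \Phi_k(u(y))| \le |u(x)-u(y)|$ pointwise, so $\cE^K_{\R^n,\Omega}(\Phi_k(u),\Phi_k(u)) \le \cE^K_{\R^n,\Omega}(u,u) < \infty$; similarly $\Phi_k(u) \in H^s$ on bounded sets using \autoref{lemma:Fourier}).

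\textbf{Step 2 (the pointwise convexity inequality).} For $\psi \in H^s(\R^n)$ with compact support in $\Omega$ and $\psi \ge 0$, write out $\cE^K(\Phi_k(u),\psi)$ as the double integral. The pointwise claim is
\[
\big(\Phi_k(u(x)) - \Phi_k(u(y))\big)\big(\psi(x) - \psi(y)\big) \le \big(u(x)-u(y)\big)\big(\Phi_k'(u(x))\psi(x) - \Phi_k'(u(y))\psi(y)\big).
\]
This follows from convexity: setting $a = u(x)$, $b = u(y)$, one has $\Phi_k(a) - \Phi_k(b) \le \Phi_k'(a)(a-b)$ and $\Phi_k(b) - \Phi_k(a) \le \Phi_k'(b)(b-a)$; multiplying the first by $\psi(x) \ge 0$ and the second by $\psi(y) \ge 0$ and adding yields exactly the displayed inequality. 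Integrating against $K(x-y)\,\d y\,\d x$ gives
\[
\cE^K(\Phi_k(u),\psi) \le \cE^K(u, \Phi_k'(u)\psi).
\]
One must check that $\Phi_k'(u)\psi$ is an admissible test function: it is bounded, compactly supported in $\Omega$, and lies in $H^s(\R^n)$ (product of the $H^s$-function $\psi$, which is also bounded since $\psi \in H^s$ with compact support in fact need not be bounded — here a small care is needed; one can first take $\psi \in C_c^\infty(\Omega)$, $\psi \ge 0$, obtain the inequality there, and then extend by density to $\psi \in H^s$ with compact support in $\Omega$, $\psi \ge 0$, using the finite-energy bounds). Since $u$ is a weak solution to $Lu = 0$ and $\Phi_k'(u)\psi \ge 0$, the right-hand side is $\le 0$. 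Hence $\cE^K(\Phi_k(u),\psi) \le 0$ for all such $\psi$.

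\textbf{Step 3 (passage to the limit $k \to \infty$).} Since $\Phi_k(u) \to u_\delta = \max\{u,\delta\}$ pointwise and in $L^2(\Omega)$, and the energies are uniformly bounded (dominated by the $u$-energy as noted in Step 1), one passes to the limit in $\cE^K(\Phi_k(u),\psi) \le 0$. By Fatou/dominated convergence applied to the double integral $\iint (\Phi_k(u(x))-\Phi_k(u(y)))(\psi(x)-\psi(y))K(x-y)$, using $|\Phi_k(u(x))-\Phi_k(u(y))| \le |u(x)-u(y)|$ and $(\psi(x)-\psi(y)) \in L^2(K\,\d y\,\d x)$ on the relevant region, the limit gives $\cE^K(u_\delta,\psi) \le 0$. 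Finally $u_\delta \in L^2(\Omega)$ (since $\Omega$ is bounded, $u_\delta \le |u| + \delta$) and $u_\delta$ satisfies \eqref{eq:energy-finite}, so $u_\delta$ is a weak subsolution to $Lu_\delta \le 0$ in $\Omega$.

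\textbf{Main obstacle.} The delicate points are not conceptual but technical: (a) justifying that $\Phi_k'(u)\psi$ is a legitimate test function in $H^s(\R^n)$ — this is cleanest by first restricting to $\psi \in C_c^\infty(\Omega)$ and then using a density argument with the finite-energy control from \eqref{eq:energy-finite}; and (b) the limit passage in Step 3, where one needs an integrable dominating function for the product $(\Phi_k(u(x))-\Phi_k(u(y)))(\psi(x)-\psi(y))K(x-y)$ over $(\R^n\times\R^n)\setminus(\Omega^c\times\Omega^c)$ — this is supplied by Cauchy–Schwarz together with $\cE^K_{\R^n,\Omega}(u,u) < \infty$ and $\psi \in H^s$. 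Since the kernel $K$ enters only through $\eqref{eq:Glower}$–$\eqref{eq:Gupper}$ (equivalently, via \autoref{lemma:Fourier}, through the $H^s$ comparability), no pointwise structure of $K$ beyond symmetry and homogeneity is used, and the argument goes through verbatim for general stable kernels.
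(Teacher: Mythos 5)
Your proposal is correct and follows essentially the same route as the paper: approximate the convex truncation by smooth convex functions, apply the elementary convexity inequality $(F_k(a)-F_k(b))(\phi_1-\phi_2)\le (a-b)(\phi_1 F_k'(a)-\phi_2 F_k'(b))$ to get $\cE^K(F_k(u),\psi)\le \cE^K(u,F_k'(u)\psi)\le 0$, and pass to the limit using the uniform contraction bound $|F_k(u(x))-F_k(u(y))|\le C|u(x)-u(y)|$. The only (immaterial) difference is in the final limit passage, where the paper invokes weak compactness in the energy Hilbert space while you use dominated convergence with a Cauchy--Schwarz dominating function; both are valid.
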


\begin{proof}
The proof is standard. First, we observe that it suffices to consider the case $\delta = 0$ since $\cE^K(u-\delta,\phi) = \cE^K(u,\phi)$. The idea is to approximate $F(x) = x_+$ by smooth, convex, non-decreasing functions $F_k :\R \to [0,\infty)$ satisfying $F_k(x) = F_k'(x) = 0$ for $x \le 0$, $F_k \to F$ uniformly, and $\sup_k \Vert F_k' \Vert_{\infty} \le C < \infty$. Then, upon the observation (see for instance \cite[Lemma 2.3]{Kas09}) that by convexity of $F_k$ for any $a,b \in \R$ and $\phi_1,\phi_2 \ge 0$ it holds
\[\begin{split}
(F_k(a) - F_k(b))(\phi_1 - \phi_2) \le (a-b)(\phi_1 F_k'(a) - \phi_2 F_k'(b)),
\end{split}\]
we obtain for any $\phi \in H^s(\R^n)$ with $\phi \equiv 0$ in $\R^n \setminus \Omega$ and $\phi \ge 0$:
\[\begin{split}
\cE^K(F_k(u),\phi) \le \cE^K(u,F_k'(\phi)) \le 0.
\end{split}\]
The second inequality follows from the fact that $u$ satisfies $Lu \le 0$ in $\Omega$ in the weak sense, using $F_k'(\phi) \in H^s(\R^n)$ as a test function. Finally, we observe that 
\[\begin{split}
F_k(u) \to u_+ ~~ \text{ in } L^2(\Omega)
\end{split}\]
by dominated convergence, using $|F_k(u)| \le C u_+$. Moreover, $C^{-1} F_k(u)$ is a normal contraction of $u_+$, which yields that
\[\begin{split}
\sup_{k} \iint_{(\R^n \times \R^n) \setminus (\Omega \times \Omega)} &(F_k(u)(x) - F_k(u)(y))^2 K(x-y) \d y \d x\\
& \le C^2 \iint_{(\R^n \times \R^n) \setminus (\Omega \times \Omega)} (u_+(x) - u_+(y))^2 K(x-y) \d y \d x < \infty.
\end{split}\]
Thus, by weak compactness of the separable Hilbert space
\[\begin{split}
\left(\{u \in L^2(\Omega) : \cE^K_{\R^n,\Omega}(u,u) < \infty \}, ~~ \Vert \cdot \Vert_{L^2(\Omega)} + (\cE^K_{\R^n,\Omega}(\cdot,\cdot))^{1/2}\right),
\end{split}\]
we obtain that for any $\phi \in H^s(\R^n)$ with $\supp(\phi) \subset  \Omega$ it holds (up to a subsequence)
\[\begin{split}
0 \ge \cE^K(F_k(u),\phi) \to \cE^K(u_+,\phi), ~~ \text{ as } k \to \infty,
\end{split}\]
which implies the desired result.
\end{proof}

Next, we introduce the notion of distributional solutions:

\begin{definition}
\label{def:distr-sol}
Let $\Omega \subset \R^n$ be an open domain and $\eps \in (0,2s)$. Let $f \in L^{1}_{loc}(\Omega)$. We say that $u \in L^{\infty}_{2s-\eps}(\R^n)$ is a distributional subsolution to $Lu \le f$ in $\Omega$ if
\begin{align}
\label{eq:distr-form}
\int_{\R^n} (L \eta) u \le \int_{\R^n} \eta f ~~ \forall \eta \in C_c^{\infty}(\R^n) ~~ \text{ with compact } \supp(\eta) \subset \Omega, ~~ \text{ and } \eta \ge 0.
\end{align}
We say that $u$ is a distributional supersolution to $Lu \ge f$ in $\Omega$ if $u$ satisfies \eqref{eq:distr-form} $\eta \in C_c^{\infty}(\R^n)$ with compact $\supp(\eta) \subset \Omega$ and $\eta \le 0$. We say that $u$ is a weak solution to $Lu = f$ in $\Omega$ if $u$ is a distributional subsolution and supersolution.
\end{definition}

We prove the following lemma, which says that distributional supersolutions can be treated just as classical supersolutions at a point $x_0$, if the supersolution can be touched from below by a $C^2$ function. Note that such property is trivial for viscosity solutions.

\begin{lemma}
\label{lemma:distr-ptw-evaluate}
Assume \eqref{eq:Glower} and \eqref{eq:Gupper}. Let $C > 0$ and $\eps \in (0,2s)$, and assume that $u \in L^{\infty}_{2s-\eps}(\R^n)$ is locally uniformly H\"older continuous, i.e. $\sup_{x_0 \in \R^n}[u]_{C^{\tau}(B_1(x_0))} < \infty$ for some $\tau \in (0,1)$, and satisfies in the distributional sense for some $C > 0$
\[\begin{split}
L u \ge -C ~~ \text{ in } B_1.
\end{split}\]
Moreover, assume that there exists a function $\phi \in L^{\infty}_{2s-\eps}(\R^n)$ that is $C^2$ around $0$, and such that $u(0) = \phi(0)$, and $u \ge \phi$ in $\R^n$.
Then, $L u(0) \ge -C$ in the classical sense.
\end{lemma}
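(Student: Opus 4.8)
The plan is to test the distributional inequality against mollifiers concentrating at $0$, and to exploit the one–sided information provided by the touching function $\phi$. Since $\phi$ is $C^2$ around $0$, fix $\rho_0\in(0,1]$ with $\phi\in C^2(\overline{B_{2\rho_0}})$ and set $g:=u-\phi$, so that $g\ge0$, $g(0)=0$, and $g$ is $C^\tau$ near $0$. First I would check that the principal value defining $Lu(0)$ exists in $[-\infty,\infty)$: splitting $\lim_{\eps\downarrow0}\int_{\R^n\setminus B_\eps}(u(0)-u(y))K(y)\,\d y$ at radius $\rho_0$ and using $u(0)=\phi(0)$ to write $u(0)-u(y)=(\phi(0)-\phi(y))-g(y)$, the $\phi$–part converges to the finite number $L\phi(0)$ (the odd part vanishes by symmetry of $K$, the even part is $O(|y|^2)$, and the tail is finite by \autoref{lemma:tail-est-p}), while the $g$–part increases to $\int_{\R^n}gK\in[0,\infty]$ by monotone convergence. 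Hence $Lu(0)=L\phi(0)-\int_{\R^n}gK$, and the whole statement (that $Lu(0)$ is classically well defined and $\ge-C$) reduces to proving the quantitative bound $\int_{\R^n}gK\le L\phi(0)+C$; in particular this forces $\int_{\R^n}gK<\infty$.

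Next, fix a radial $\eta\in C_c^\infty(B_1)$ with $\eta\ge0$, $\int\eta=1$, and set $\eta_\rho(x)=\rho^{-n}\eta(x/\rho)$. For $\rho<\rho_0$ the function $\eta_\rho\ge0$ is admissible in \autoref{def:distr-sol} (its support is compactly contained in $B_1$), so $\int_{\R^n}(L\eta_\rho)u\ge-C$. A routine computation (truncate the principal value, apply Fubini, and use that $K$ and $\eta_\rho$ are even) shows that for every $v\in L^\infty_{2s-\eps}(\R^n)$
\[
\int_{\R^n}(L\eta_\rho)\,v=Lv_\rho(0)=-\tfrac12\int_{\R^n}\delta^2_z v_\rho(0)\,K(z)\,\d z,\qquad v_\rho:=v*\eta_\rho,\quad \delta^2_z w(0):=w(z)+w(-z)-2w(0),
\]
where $v_\rho$ is smooth and $Lv_\rho(0)$ is the classically convergent value. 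Applying this to $v=u=\phi+g$ gives $L\phi_\rho(0)+Lg_\rho(0)\ge-C$; since $\phi_\rho\to\phi$ in $C^2$ near $0$ and in the tail–weighted sense, dominated convergence yields $L\phi_\rho(0)\to L\phi(0)$, whence $\liminf_{\rho\downarrow0}Lg_\rho(0)\ge-C-L\phi(0)$. Thus the proof is complete once we show $\limsup_{\rho\downarrow0}Lg_\rho(0)\le-\int_{\R^n}gK$.

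To this end, fix $\delta\in(0,\rho_0)$ and split $Lg_\rho(0)=-\tfrac12\int_{B_\delta}\delta^2_z g_\rho(0)K-\tfrac12\int_{\R^n\setminus B_\delta}\delta^2_z g_\rho(0)K$. On $\R^n\setminus B_\delta$ the convergence $\delta^2_z g_\rho(0)\to g(z)+g(-z)$ together with the uniform bound $|\delta^2_z g_\rho(0)|\le C(1+|z|)^{2s-\eps}$ gives, by dominated convergence, the limit $-\int_{\R^n\setminus B_\delta}gK$. The delicate term is the near–origin one, for which I would prove $\liminf_{\rho\downarrow0}\int_{B_\delta}\delta^2_z g_\rho(0)K\ge 2\int_{B_\delta}gK$ (value in $[0,\infty]$). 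From $g\ge0$, $g(0)=0$ and $g\in C^\tau$ near $0$ one gets $0\le g_\rho(0)\le[g]_{C^\tau}\rho^\tau$ and $g_\rho(z)\ge g(z)-[g]_{C^\tau}\rho^\tau$, hence the two pointwise bounds $\delta^2_z g_\rho(0)\ge g(z)+g(-z)-4[g]_{C^\tau}\rho^\tau$ and $\delta^2_z g_\rho(0)\ge -2[g]_{C^\tau}\rho^\tau$. Since $\int_{B_r}K=\infty$ for every $r>0$, the constant $\rho^\tau$–term in the first bound cannot be integrated against $K$ down to the origin, so one excises a small ball: choose $A(\rho)\to\infty$ with $A(\rho)\rho\to0$ and $\rho^\tau(A(\rho)\rho)^{-2s}\to0$ (e.g.\ $A(\rho)=\rho^{-\beta}$ with $1-\tfrac{\tau}{2s}<\beta<1$). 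On $B_{A(\rho)\rho}$ the second bound yields $\int_{B_{A(\rho)\rho}}\delta^2_z g_\rho(0)K\ge-2[g]_{C^\tau}\rho^\tau\int_{B_{A(\rho)\rho}}K=-C\rho^\tau(A(\rho)\rho)^{-2s}\to0$; on $B_\delta\setminus B_{A(\rho)\rho}$ the first bound, together with monotone convergence $\int_{B_\delta\setminus B_{A(\rho)\rho}}(g(z)+g(-z))K\uparrow 2\int_{B_\delta}gK$ and $\rho^\tau\int_{B_\delta\setminus B_{A(\rho)\rho}}K\le C\rho^\tau(A(\rho)\rho)^{-2s}\to0$, gives the claimed inequality. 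Combining with the $\R^n\setminus B_\delta$ contribution gives $\limsup_{\rho}Lg_\rho(0)\le-\int_{B_\delta}gK-\int_{\R^n\setminus B_\delta}gK=-\int_{\R^n}gK$, which finishes the proof.

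The only genuinely non-elementary step is this last near–origin estimate: the combination of the non–integrable singularity of $K$ at the origin with the mere H\"older (rather than $C^2$) regularity of $g$ there is precisely what forces the two–scale cut–off, and is where the argument is least routine; everything else is soft.
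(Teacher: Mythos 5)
Your overall strategy (mollify, use the self-adjointness identity $\int (L\eta_\rho)v = Lv_\rho(0)$, split $u=\phi+g$, and pass to the limit) is genuinely different from the paper's proof, and most of it is sound: the reduction to $\int_{\R^n}gK \le L\phi(0)+C$, the convergence $L\phi_\rho(0)\to L\phi(0)$, and the far region $\{|z|\ge\delta\}$ are all fine. The gap is in the near-origin estimate, and it is exactly at the point you yourself flag as the least routine. You write $\int_{B_{A(\rho)\rho}}K = C(A(\rho)\rho)^{-2s}$, but by the homogeneity \eqref{eq:hom} one has $\int_{B_r}K(y)\,\d y = \Vert K\Vert_{L^1(\mathbb{S}^{n-1})}\int_0^r t^{-1-2s}\,\d t = +\infty$ for every $r>0$; it is the integral over $\R^n\setminus B_r$ that equals $cr^{-2s}$. (You correctly note one sentence earlier that $\int_{B_r}K=\infty$; the second bound $\delta^2_zg_\rho(0)\ge -2[g]_{C^\tau}\rho^\tau$ is again constant in $z$, so it suffers from the same non-integrability and gives $-\infty$ on the inner ball.)

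The natural repair, using smoothness of the mollification via $|\delta^2_zg_\rho(0)|\le \Vert D^2g_\rho\Vert_{L^\infty}|z|^2\lesssim [g]_{C^\tau}\rho^{\tau-2}|z|^2$, yields $\int_{B_{A\rho}}\delta^2_zg_\rho(0)K \ge -c\,\rho^{\tau-2s}A^{2-2s}$, which tends to $0$ only if $A^{2-2s}=o(\rho^{2s-\tau})$; since your annulus estimate forces $A\to\infty$, the two requirements are incompatible whenever $\tau\le 2s$, which is the case the lemma actually needs ($\tau$ is merely some H\"older exponent of $u$). More structurally, what you need is a Fatou inequality $\liminf_\rho\int_{B_\delta}\delta^2_zg_\rho(0)K\ge 2\int_{B_\delta}gK$, and this requires a $\rho$-uniform $K$-integrable minorant of $\delta^2_zg_\rho(0)$; the best envelope obtainable from $g\ge0$, $g(0)=0$, $g\in C^\tau$ alone is of order $-|z|^\tau$, which is $K$-integrable near the origin only for $\tau>2s$. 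So the proof as written covers only $\tau>2s$, and closing the general case seems to require further input from the equation beyond the single inequality $Lu_\rho(0)\ge-C$. The paper avoids this obstruction entirely by a comparison/duality argument: if an admissible $\Phi\le u$ touching at $0$ had $L\Phi\le -C-r$ near $0$, then $w_\delta=u-\Phi-\delta$ would be a distributional supersolution of $Lw_\delta\ge r$, and pairing with the solution of $Lv=\psi\ge0$ in $B_r$, $v=0$ outside, forces $w_\delta\ge0$ a.e., contradicting $w_\delta(0)=-\delta$; the lemma then follows by applying this to the truncations $\phi_\delta$ (equal to $\phi$ in $B_\delta$ and to $u$ outside) and using monotone convergence $L(\phi_\delta-u)(0)\searrow0$.
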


\begin{proof}
First, we prove that for any $\Phi \in L^{\infty}_{2s-\eps}(\R^n)$ that is $C^2$ around $0$ and also satisfies \\$\sup_{x_0 \in \R^n}[\Phi]_{C^{\tau}(B_1(x_0))} < \infty$, and $\Phi(0) = u(0)$, $\Phi \le u$, it holds
\begin{align}
\label{eq:distr-eval-help1}
L \Phi(0) \ge - C.
\end{align}
By contradiction, assume that there exist such $\Phi$ and $r > 0$ such that $L \Phi \le -C - r$ in $B_r$. Note that for the last property, we used that $L \Phi$ is H\"older continuous in $B_{1/2}$ due to the regularity of $\Phi$ and \cite[Lemma 2.2.5 (ii), Remark 2.2.6]{FeRo23}.
Let $\delta > 0$ and observe that for $w_{\delta} = u - \Phi - \delta$ and $\delta > 0$ it holds:
\[\begin{split}
L w_{\delta} \ge r ~~ \text{ in } B_r ~~ \text{ in the distributional sense}.
\end{split}\]
Let now $\psi \in C_c^{\infty}(B_r)$ be nonnegative and define $v \in H^s(\R^n)$ to be the unique weak solution (see \cite[Theorem 2.2.19]{FeRo23}, \cite{FKV15}) to
\[\begin{split}
\begin{cases}
L v &= \psi ~~ \text{ in } B_r,\\ 
v &= 0 ~~ \text{ in } \R^n \setminus B_r.
\end{cases}
\end{split}\]
Note that $v \in C^{2s+\alpha}_{loc}(B_r)$ for some $\alpha \in (0,1)$, and hence is a strong solution. Indeed, $v \in C^{\alpha}(\R^n)$ by boundary regularity theory (see \cite[Proposition 2.5.10]{FeRo23}), and thus, by interior regularity theory, we have $v \in C^{2s+\alpha}_{loc}(B_r)$ (see \cite[Proposition 2.4.4]{FeRo23}). Moreover, by the maximum principle (see \cite[Lemma 2.3.3]{FeRo23}), it holds $v \ge 0$ in $\R^n$. Therefore,
\[\begin{split}
(w_{\delta}, \psi) = (w_{\delta} , L v) \ge r \Vert v \Vert_{L^1(B_r)} \ge 0.
\end{split}\]
Note that $v$ is an admissible test function by classical density results. Since $\psi \ge 0$, it follows that $w_{\delta} \ge 0$ a.e. in $B_r$. This is a contradiction, since $w_{\delta}(0) = - \delta$ and $w_{\delta}$ is continuous at 0. We have shown \eqref{eq:distr-eval-help1}.

Let us now turn to the actual proof of the lemma. Let $\phi$ be as in the assumption. First of all, note that $u(0) - u(x) \le \phi(0) - \phi(x)$ for any $x \in \R^n$. Therefore, $Lu(0) \le L \phi(0) < \infty$, and $Lu(0)$ can be evaluated in a pointwise way, however note that it could be $Lu(0) = - \infty$. 
It remains to show that $L u(0) \ge -C$. To see this, let us define for $\delta > 0$ small enough
\[\begin{split}
\phi_{\delta} = 
\begin{cases}
\phi ~~ &\text{ in } B_{\delta},\\
u ~~ &\text{ in } \R^n \setminus B_{\delta}.
\end{cases}
\end{split}\]
Note that $\phi_{\delta} \nearrow u$, as $\delta \to 0$, and therefore, by monotone convergence, it holds $L (\phi_{\delta}- u)(0) \searrow 0$. Moreover, note that \eqref{eq:distr-eval-help1} also holds true for $\phi_{\delta}$ since it can be approximated by functions $\Phi \in L^{\infty}_{2s-\eps}(\R^n)$ that are $C^2$ around $0$ and satisfy $\sup_{x_0 \in \R^n}[\Phi]_{C^{\tau}(B_1(x_0))} < \infty$, and $\Phi(0) = u(0)$, $\Phi \le u$. In fact, given any $\gamma \in (0,\delta)$, we can find such $\Phi$ satisfying $|L \Phi(0) - L \phi(0)| < \gamma$ by choosing $\Phi = \phi$ in $B_{\delta - \gamma} \cup (\R^n \setminus B_{\delta})$ and doing a $C^{\tau}$-interpolation between $u,\phi$ on $B_{\delta} \setminus B_{\delta - \gamma}$ and using \eqref{eq:Gupper}.\\
Thus, it follows from an application of \eqref{eq:distr-eval-help1} to $\phi_{\delta}$
\[\begin{split}
0 = \lim_{\delta \to 0} L (\phi_{\delta}- u)(0) \ge \liminf_{\delta \to 0} L \phi_{\delta}(0) - L u(0) \ge - C - Lu(0),
\end{split}\]
which implies the desired result.
\end{proof}

The following lemma relates pointwise supersolutions to distributional supersolutions:

\begin{lemma}
\label{lemma:ptw-distr}
Assume \eqref{eq:Glower} and \eqref{eq:Gupper}. Let $C > 0$ and $\eps \in (0,2s)$, and assume that $u \in L^{\infty}_{2s-\eps}(\R^n)$ satisfies in the pointwise sense
\[\begin{split}
L u \ge - C ~~ \text{ in } B_1.
\end{split}\]
Then, $L u \ge -C$ in $B_1$ also in the distributional sense.
\end{lemma}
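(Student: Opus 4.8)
The plan is to reduce the distributional inequality to a statement that can be tested against an arbitrary nonnegative $\eta \in C_c^\infty(B_1)$, and to exploit the self-adjointness of $L$ together with the pointwise hypothesis. First I would fix such an $\eta$ and observe that, since $\eta$ is smooth and compactly supported, $L\eta$ is a well-defined bounded continuous function on $\R^n$ that decays like $|x|^{-n-2s}$ at infinity (this follows from \eqref{eq:Gupper}, homogeneity of $K$, and the standard pointwise estimates for $L$ applied to a $C^2_c$ function, e.g. \cite[Lemma 2.2.5]{FeRo23}); in particular $(L\eta)\,u \in L^1(\R^n)$ because $u \in L^\infty_{2s-\eps}(\R^n)$ and the decay of $L\eta$ beats the growth of $u$ at infinity — this is exactly the kind of bound quantified in \autoref{lemma:tail-est-p}. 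So the integral $\int_{\R^n}(L\eta)u$ makes sense, and the goal is to show $\int_{\R^n}(L\eta)u \le \int_{\R^n}\eta f$ whenever $Lu \ge -C$ pointwise and $f$ is whatever bound we are carrying; in the present normalization $f \equiv -C$, so we must show $\int (L\eta)u \le -C\int \eta$.

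The key step is a Fubini/symmetrization argument. Writing $L\eta(x) = \text{p.v.}\int (\eta(x)-\eta(x+y))K(y)\,\d y$ and plugging into $\int_{\R^n}(L\eta(x))u(x)\,\d x$, I would symmetrize in the $(x,y)$ integral using $K(y)=K(-y)$ to move the operator onto $u$: formally,
\[
\int_{\R^n} (L\eta)\, u \, \d x = \int_{\R^n}\int_{\R^n} (u(x)-u(x+y))K(y)\,\d y\,\eta(x)\,\d x = \int_{\R^n} (Lu)\,\eta\,\d x \ge -C\int_{\R^n}\eta\,\d x,
\]
where the last inequality uses the pointwise hypothesis $Lu \ge -C$ on $\supp\eta \subset B_1$ and $\eta \ge 0$. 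The middle identity is where the work lies: the principal-value issue must be handled carefully since $u$ is only assumed to lie in $L^\infty_{2s-\eps}$ and need not be regular, so $Lu(x)$ need not exist for a.e. $x$ a priori. The clean way is to truncate: replace the p.v. by $\int_{\R^n\setminus B_\rho(x)}$, so that the double integral is absolutely convergent (the inner kernel is then integrable, and $u\in L^1_{loc}$ together with the tail control from \autoref{lemma:tail-est-p} handles the far part), apply Fubini freely to get $\int_{\R^n\setminus B_\rho}(u(x)-u(x+y))K(y)\,\d y$ integrated against $\eta(x)$, and then let $\rho \downarrow 0$. On the $\eta$-side, $L_\rho\eta \to L\eta$ uniformly with a uniform $|x|^{-n-2s}$-type bound (by the $C^2$ regularity of $\eta$), so dominated convergence against $u\in L^1_{loc}\cap L^\infty_{2s-\eps}$ gives $\int (L_\rho\eta)u \to \int(L\eta)u$. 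On the $u$-side, $\int_{\R^n\setminus B_\rho(x)}(u(x)-u(x+y))K(y)\,\d y$ is, by definition, converging to $Lu(x)$ at every point where the p.v. exists — in particular wherever the pointwise hypothesis is meaningful on $B_1$ — and the pointwise lower bound $Lu\ge -C$ passes through Fatou after one checks an integrable lower bound for the truncated integrands on $\supp\eta$.

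The main obstacle I expect is precisely the justification of interchanging the limit $\rho\to 0$ with the $x$-integration on the $u$-side, since we have no regularity on $u$ and hence no control on how the truncated integrals $L_\rho\eta$-pairings oscillate; the resolution is to keep the operator on the smooth function $\eta$ for as long as possible (so that all limits are taken against the well-behaved factor) and only at the very end invoke the pointwise hypothesis. Concretely, one rewrites
\[
\int_{\R^n}(L_\rho\eta(x))\,u(x)\,\d x = \int_{\R^n}\int_{\R^n\setminus B_\rho}\big(u(x)-u(x+y)\big)K(y)\,\d y\;\eta(x)\,\d x,
\]
by Fubini (the double integral is absolutely convergent for fixed $\rho$), and on the right-hand side one notes that for $x\in\supp\eta$ the inner integral $\int_{\R^n\setminus B_\rho}(u(x)-u(x+y))K(y)\,\d y$ is bounded below, uniformly in $\rho\le 1$, by $-C - o(1)$ using the pointwise hypothesis at $x$ together with the absolute convergence of the far tail (the near part $\int_{B_1\setminus B_\rho}(u(x)-u(x+y))K(y)\,\d y$ has a liminf $\ge -C - \tail_K$-bounded-quantity by definition of the pointwise evaluation of $Lu(x)$). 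Then Fatou in $x$ yields $\liminf_{\rho\to0}\int(L_\rho\eta)u \ge -C\int\eta$ after subtracting the controlled far-field contribution, and combining with $\int(L_\rho\eta)u \to \int(L\eta)u$ gives $\int(L\eta)u \ge -C\int\eta$. Replacing $\eta$ by $-\eta$ — or rather, keeping $\eta\ge0$ and noting the inequality is the one required in \eqref{eq:distr-form} for a supersolution — completes the proof; the symmetric roles of $K(y)$ and $K(-y)$ guarantee no boundary terms appear in the Fubini step.
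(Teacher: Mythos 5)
Your proposal is correct and follows essentially the same route as the paper, which simply invokes the self-adjointness identity $\int_{\R^n}\eta\,(Lu)\,\d x=\int_{\R^n}(L\eta)\,u\,\d x$ (citing the integration-by-parts formula of \cite[Lemma 2.2.23]{FeRo23}) and then applies the pointwise bound $Lu\ge -C$ on $\supp\eta$; your truncation/Fubini/Fatou discussion is just an unpacking of that cited formula. Note only that your initial statement of the goal ($\int(L\eta)u\le -C\int\eta$) has the wrong inequality direction for a nonnegative test function, but you correctly arrive at $\int(L\eta)u\ge -C\int\eta$ at the end, which is what \eqref{eq:distr-form} requires for a supersolution.
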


Note that in the situation of the above lemma, it could be that $L u = + \infty$ for some points in $B_1$, since $u$ is not assumed to be smooth.

\begin{proof}
The proof follows from the observation that for any $\eta \in C_c^{\infty}(\R^n)$ with $\eta \ge 0$ it holds
\[\begin{split}
-C \int_{\R^n} \eta \d x \le \int_{\R^n} \eta (L u) \d x = \int_{\R^n} (L \eta) u \d x.
\end{split}\]
Here, we used the integration by parts formula, which can be shown in the same way as in \cite[Lemma 2.2.23]{FeRo23}.
\end{proof}

\subsection{Weak Harnack inequality and local boundedness}

Under a pointwise comparability condition on $K$, it is well-known that solutions to $Lu = 0$ in $\Omega$ satisfy a Harnack inequality. 
In our more general framework, we cannot expect a Harnack inequality to hold true. However, we can establish a weak Harnack inequality, and a local boundedness estimate including a nonlocal tail term for weak super-/ subsolutions. 

First, we prove a weak Harnack inequality for nonnegative weak supersolutions: 

\begin{lemma}[weak Harnack inequality]
\label{lemma:wHI-p}
Assume \eqref{eq:Glower}, \eqref{eq:Gupper}, and $2s < n$. Let $u$ be a globally nonnegative, weak supersolution to $Lu \ge 0$ in $B_{2}$. Then, for any $R \in (0,1)$ and $0 < q < \frac{n}{n-2s}$:
\begin{align}
\label{eq:wHI-p}\tag{wHI}
\left(\dashint_{B_R} u^q \d x \right)^{1/q} \le c \inf_{B_R} u,
\end{align}
where $c > 0$ is a constant depending only on $n,s,\lambda,\Lambda,q$, which explodes as $q \to \frac{n}{n-2s}$.
\end{lemma}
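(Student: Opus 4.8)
The plan is to follow the classical De Giorgi–Moser route adapted to the nonlocal setting, but being careful to use only the structural assumptions \eqref{eq:Glower}–\eqref{eq:Gupper} (no pointwise lower bound on $K$), which is the genuinely new point compared with the literature that assumes \eqref{eq:Kcomp}. The key observation making this possible is that, by \autoref{lemma:Fourier}, the bilinear form $\cE^K$ is comparable to the $H^s$-seminorm, so all the functional-analytic machinery (Sobolev inequality, Caccioppoli inequality, logarithmic estimate) that underlies the Moser iteration is available in exactly the same form as for the fractional Laplacian; the anisotropy of $K$ never enters those energy estimates. Concretely, I would split the statement into two parts, following Moser: (a) a local boundedness-type estimate for negative powers, showing $\big(\dashint_{B_R} u^{-q_0}\big)^{-1/q_0} \ge c\inf_{B_R} u$ together with an $L^{p_0}\to L^\infty$ bound for $u^{-1}$; and (b) a lemma of "$\log$-type" / John–Nirenberg flavor bridging small positive and small negative exponents, i.e. $\big(\dashint_{B_R} u^{q}\big)^{1/q} \le C$ and $\big(\dashint_{B_R} u^{-q_0}\big)^{1/q_0}\le C$ for the same normalization, valid for $0<q<\frac{n}{n-2s}$ and some small $q_0>0$.

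For step (a): since $u\ge 0$ globally and is a weak supersolution in $B_2$, for $\beta<0$ the function $\bar u = u+\epsilon$ (to avoid division by zero, then let $\epsilon\to 0$) has $\bar u^{\beta}$ as a legitimate object, and testing the weak formulation \eqref{eq:weak-form} with $\phi = \tau^2 \bar u^{\beta}$ for cutoffs $\tau$ gives, after the standard algebraic inequality on increments of $t\mapsto t^\beta$, a Caccioppoli inequality of the form $[\tau \bar u^{\beta/2}]_{H^s(\R^n)}^2 \le C\,(\text{stuff})$; crucially the nonlocal tail term $\cE^K_{\R^n,\Omega}$ is controlled because $u\ge 0$ everywhere, so the "bad" far-away contributions have a favorable sign (this is precisely where global nonnegativity is used and where kernels without \eqref{eq:Kcomp} still work, since one only needs $\int_{\R^n\setminus B_r}K\le C r^{-2s}$ from \eqref{eq:Gupper} and homogeneity). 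Combining with the fractional Sobolev inequality $\|w\|_{L^{2^*_s}}^2 \le C[w]_{H^s}^2$ (here $2^*_s = \tfrac{2n}{n-2s}$, using $2s<n$) and iterating over a sequence of shrinking balls $B_{R(1+2^{-k})}$ yields $\sup_{B_R}\bar u^{-1} \le C\big(\dashint_{B_{2R}} \bar u^{-q_0}\big)^{1/q_0}$ for suitable small $q_0$, hence the control of $\inf_{B_R} u$ by an $L^{-q_0}$ quantity.

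For step (b), the $\log$-lemma: test the weak supersolution inequality with $\phi = \tau^2 \bar u^{-1}$ to obtain an estimate on $[\tau \log \bar u]_{H^s}$, i.e. $\log\bar u$ has bounded $H^s$-seminorm on $B_{3R/2}$ weighted by cutoffs, which via the fractional Poincaré inequality gives that $\log\bar u$ lies in a space of bounded-mean-oscillation type; then a (fractional) John–Nirenberg inequality produces a small $q_0>0$ with $\dashint_{B_R} e^{q_0|\log\bar u - (\log\bar u)_{B_R}|}\le C$, which simultaneously bounds $\big(\dashint_{B_R}\bar u^{q_0}\big)^{1/q_0}$ and $\big(\dashint_{B_R}\bar u^{-q_0}\big)^{1/q_0}$ by $e^{(\log\bar u)_{B_R}}$ times a constant. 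Then a Moser iteration on \emph{positive} powers $\beta>0$ (again testing with $\tau^2\bar u^{\beta}$, same Caccioppoli + Sobolev scheme, iterating the exponent up through $\kappa = \tfrac{n}{n-2s}$) upgrades $\big(\dashint \bar u^{q_0}\big)^{1/q_0}$ to $\big(\dashint_{B_R}\bar u^{q}\big)^{1/q}$ for every $q<\tfrac{n}{n-2s}$, with a constant blowing up as $q\uparrow \tfrac{n}{n-2s}$ (the iteration has infinitely many steps accumulating there). Chaining (b) with (a) and letting $\epsilon\to 0$ gives \eqref{eq:wHI-p}.

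The main obstacle I anticipate is not the Moser machinery per se — that is by now routine for the fractional Laplacian — but verifying carefully that every step survives with \emph{only} \eqref{eq:Glower}–\eqref{eq:Gupper} in hand: namely, (i) that testing with the (possibly unbounded) functions $\tau^2\bar u^\beta$ is legitimate in the sense of \autoref{def:weak-sol}, which requires the finite-energy condition \eqref{eq:energy-finite} and a truncation/approximation argument in the spirit of \autoref{lemma:truncation-subsol}; and (ii) that the nonlocal tail contributions $\iint_{(\R^n\times\R^n)\setminus(B_r\times B_r)}$ arising in the Caccioppoli and log estimates are absorbed using global nonnegativity of $u$ together with the bound $\int_{\R^n\setminus B_r}K(y)\,\d y \le C r^{-2s}$ (a consequence of homogeneity and \eqref{eq:Gupper}), rather than any pointwise kernel comparison. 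Once these two points are handled, everything reduces to the comparability $\cE^K \simeq [\cdot]_{H^s}^2$ from \autoref{lemma:Fourier} plus the standard fractional Sobolev and John–Nirenberg inequalities.
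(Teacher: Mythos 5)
Your overall architecture (Caccioppoli $+$ Sobolev via the global coercivity of $\cE^K$, Moser iteration on positive exponents with the constant blowing up as $q\uparrow\frac{n}{n-2s}$) matches the second half of the paper's proof, which follows \cite{KaWe22} and indeed only needs the Sobolev inequality \eqref{eq:Sob} obtained from \autoref{lemma:Fourier}. But for the hard half — passing from small positive to negative exponents — the paper does \emph{not} run the classical Moser scheme: it imports the $L^{\eps}$-to-$\inf$ estimate \eqref{eq:whi-Leps} as a black box from \cite[Theorem 1.6]{ImSi20}, whose proof is a De Giorgi-type growth/measure argument tailored to degenerate kernels, precisely because the log-lemma route you propose runs into trouble under \eqref{eq:Glower}--\eqref{eq:Gupper} alone.

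The gap is in your step (b). Testing with $\tau^2\bar u^{-1}$ yields a bound on the \emph{localized} $K$-energy $\iint_{B\times B}\bigl(\log\bar u(x)-\log\bar u(y)\bigr)^2\tau(x)\tau(y)K(x-y)\,\d x\,\d y$, not on a local $H^s$-seminorm of $\log\bar u$: \autoref{lemma:Fourier} compares only the \emph{global} forms over $\R^n\times\R^n$, and since $\log\bar u$ is unbounded and uncontrolled outside $B_2$ you cannot pass from the localized $K$-energy to the global one. For a kernel such as $K_1=\mathbbm{1}_{\mathcal C}(\theta)|y|^{-n-2s}$, or worse a kernel merely in $L^1(\mathbb S^{n-1})$ satisfying \eqref{eq:Glower}, the localized form only sees increments along the directions where $K$ charges mass, so the ``standard fractional Poincar\'e inequality'' (which presupposes $K(y)\gtrsim|y|^{-n-2s}$) is simply not available, and the subsequent BMO/John--Nirenberg step has nothing to act on. One could try to salvage this with a weighted Poincar\'e inequality proved by a chaining argument based on \autoref{lemma:K-mass} (mass in a $\delta_0$-cap around every direction at every scale), but that is a genuinely nontrivial additional lemma, not a consequence of the ingredients you cite; your closing claim that ``everything reduces to the comparability $\cE^K\simeq[\cdot]_{H^s}^2$ plus the standard fractional Sobolev and John--Nirenberg inequalities'' is exactly where the argument breaks. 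Steps (a) and the positive-power iteration are fine as written; the crossover is the step that must either be reproved by a De Giorgi-type measure estimate or, as the paper does, quoted from \cite{ImSi20}.
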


\begin{proof}
First, by \cite[Theorem 1.6]{ImSi20}, there exist $\eps \in (0,1)$ and $c > 0$, depending only on $n,s,\lambda,\Lambda$, such that
\begin{align}
\label{eq:whi-Leps}
\left(\dashint_{B_R} u^{\eps} \d x \right)^{1/\eps} \le c \inf_{B_R} u.
\end{align}
Note that \cite{ImSi20} assumes $u$ to be globally bounded instead of just \eqref{eq:energy-finite}. However, this issue can be circumvented by a standard truncation argument, using that $\tilde{u} = u \mathbbm{1}_{B_2}$ is still a weak supersolution to $L \tilde{u} \ge 0$ in $B_{3/2}$, since $- L(u \mathbbm{1}_{\R^n \setminus B_2}) \ge 0$ in $B_{3/2}$. \\
Moreover, we claim that, using a Moser iteration scheme for small positive exponents, one can prove that
\begin{align}
\label{eq:Moser-positive}
\left(\dashint_{B_{R/2}} u^q \d x \right)^{1/q} \le c \left(\dashint_{B_R} u^{\eps} \d x \right)^{1/\eps},
\end{align}
where $c > 0$ depends only on $n,s,\lambda,\Lambda,\eps$. To prove \eqref{eq:Moser-positive}, we will follow the arguments in the proof of \cite[Theorem 4.2]{KaWe22}, which are established for kernels satisfying \eqref{eq:Gupper} and a Sobolev embedding, i.e.,
\begin{align}
\label{eq:Sob}
\Vert v^2 \Vert_{L^{\frac{n}{n-2s}}(\R^n)} \le c \int_{\R^n} \int_{\R^n} (v(x) - v(y))^2 K(x-y) \d y \d x \qquad \forall v \in L^{\frac{2n}{n-2s}}(\R^n).
\end{align} 
Note that \eqref{eq:Sob} is satisfied in our setting. Indeed, by Fourier transform, we can rewrite
\[\begin{split}
\int_{\R^n} \int_{\R^n} (v(x) - v(y))^2 K(x-y) \d y \d x  = \int_{\R^n} |\mathcal{F} v(\xi)|^2 \mathcal{A}_K(\xi)\d \xi \ge c \int_{\R^n} |\mathcal{F} v(\xi)|^2 |\xi|^{2s} \d \xi,
\end{split}\]
where $\mathcal{A}_K$ denotes the Fourier symbol of $L$ and we used \autoref{lemma:Fourier}.
Therefore, \eqref{eq:Sob} follows from the classical fractional Sobolev embedding, i.e.
\[\begin{split}
\Vert v^2 \Vert_{L^{\frac{n}{n-2s}}(\R^n)} \le c [v]_{H^s(\R^n)}^2 = c \int_{\R^n} |\mathcal{F}v(\xi)|^2 |\xi|^{2s} \d \xi \le c \int_{\R^n} \int_{\R^n} (v(x) - v(y))^2 K(x-y) \d y \d x.
\end{split}\]

We are now in the position to apply the considerations in \cite{KaWe22} to our setting. By following the arguments in \cite[Proof of Theorem 4.2]{KaWe22} (and translating them to elliptic equations), we obtain 
\[\begin{split}
\Vert u \Vert_{L^{p\kappa}(B_r)} \le \left( c \rho^{-2s} \right)^{1/p} \Vert u \Vert_{L^p(B_{r+\rho})}
\end{split}\]
for any $0 < \rho \le r \le r +\rho \le R$ and $\eps < p < q/\kappa$, where $c > 0$ depends on $n,s,\lambda,\Lambda,1-\frac{q}{\kappa}$. Moreover, note that in the elliptic case we have $\kappa = \frac{n}{n-2s}$. From here, \eqref{eq:Moser-positive} follows by a standard iteration argument upon determining $K \in \N$ to be the unique number such that 
\begin{align}
\label{eq:K-choice}
\frac{q}{\kappa}\kappa^{-K} \le \eps < \frac{q}{\kappa} \kappa^{-K+1},
\end{align}
and choosing $p_i = \frac{q}{\kappa}\kappa^{-i}$, $\rho_i = 2^{-i-1}R$, $r_K = R$, and $r_{i-1} = r_{i} - \rho_i$ for any $0 \le i \le K$. In fact, we obtain
\[\begin{split}
\Vert u \Vert_{L^q(B_{R/2})} &\le \Vert u \Vert_{L^{p_0 \kappa}(B_{r_0})} \le (2^{2s}c)^{\frac{\kappa}{q}\sum_{j = 1}^K \kappa^j}  2^{\frac{2s\kappa}{q} \sum_{j = 1}^K j \kappa^j} R^{-\frac{2s\kappa}{q} \sum_{j=1}^K \kappa^j} \Vert u \Vert_{L^{\frac{q}{\kappa}\kappa^{-K}}(B_R)}\\
&\le R^{-\frac{n \kappa}{q} (\kappa^K - 1)} \Vert u \Vert_{L^{\frac{q}{\kappa}\kappa^{-K}}(B_R)},
\end{split}\]
where we used that
\[\begin{split}
-\frac{2s\kappa}{q}\sum_{j=1}^K \kappa^j = -\frac{2s\kappa}{q} \left(\frac{1 - \kappa^{K+1}}{1-\kappa} - 1 \right) = -\frac{2s\kappa}{q} \frac{\kappa}{\kappa-1}(\kappa^K - 1) = -\frac{n \kappa}{q} (\kappa^K - 1).
\end{split}\]
By \eqref{eq:K-choice}, this yields
\[\begin{split}
\left( \dashint_{B_{R/2}} u^q \d x \right)^{1/q} \le c \left( \dashint_{B_{R}} u^{\frac{q}{\kappa}\kappa^{-K}} \d x \right)^{\frac{\kappa}{q} \kappa^K} \le c \left( \dashint_{B_{R}} u^{\eps} \d x \right)^{1/\eps},
\end{split}\]
as desired. By combination of \eqref{eq:Moser-positive} with \eqref{eq:whi-Leps}, we conclude the proof.
\end{proof}


Next, we also have the following local boundedness estimate for weak subsolutions:

\begin{lemma}[local boundedness]
\label{lemma:locbd-p}
Assume \eqref{eq:Glower} and \eqref{eq:Kupper-p} for some $1 \le p \le \infty$, and $2s < n$. Let $u$ be a weak subsolution to $L u \le 0$ in $B_2$. Then, for any $R \in (0,1)$ and any $\eps \in (0,2s)$
\begin{align}
\label{eq:locbd-prelim}
\sup_{B_{R/2}} u  \le c \left(\dashint_{B_R} |u|^{2}\right)^{1/2} + c R^{2s-\eps} \sup_{\bar{R} \ge R} \frac{\left( \dashint_{B_{\bar{R}}} |u|^{q} \right)^{1/q}}{\bar{R}^{2s-\eps}},
\end{align} 
where $c > 0$ depends only on $n,s,\lambda,\Lambda,q$. Moreover, if $u \in L^{q}_{2s-\eps}(\R^n)$ and $u \ge 0$ in $B_R$, then
\begin{align}
\label{eq:locbd-p}
\sup_{B_{R/2}} u \le c \left(\dashint_{B_R} u \right) + c R^{2s-\eps} \sup_{\bar{R} \ge R} \frac{\left( \dashint_{B_{\bar{R}}} |u|^{q} \right)^{1/q}}{\bar{R}^{2s-\eps}},
\end{align}
where $c > 0$ depends only on $n,s,\lambda,\Lambda,q$.
\end{lemma}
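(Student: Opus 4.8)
The plan is to derive \eqref{eq:locbd-prelim} by a De Giorgi iteration built on a nonlocal Caccioppoli inequality and the fractional Sobolev embedding, and then to obtain \eqref{eq:locbd-p} from it by a routine interpolation. Two facts are what make the argument go through in our generality: the Sobolev inequality $\|v^2\|_{L^{n/(n-2s)}(\R^n)} \le c\,\cE^K(v,v)$ holds under \eqref{eq:Glower}--\eqref{eq:Gupper} alone — via the Fourier representation of $\cE^K$ and \autoref{lemma:Fourier}, this is precisely \eqref{eq:Sob} — and the nonlocal tail produced by the Caccioppoli inequality is controlled, using only \eqref{eq:Kupper-p}, exactly by the dyadic-plus-H\"older estimate behind \autoref{lemma:tail-est-p}. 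Since the kernel class \eqref{eq:hom}--\eqref{eq:Glower}--\eqref{eq:Kupper-p} is invariant under $u \mapsto u(R\,\cdot)$ (the operator is merely rescaled by $R^{2s}$, with the same $\lambda,\Lambda$), it suffices to treat $R=1$; writing $M := \sup_{\bar R \ge 1}\bar R^{-(2s-\eps)}(\dashint_{B_{\bar R}}|u|^{q})^{1/q}$ we may also assume $M < \infty$, as otherwise nothing has to be proved.

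The first real step is the \emph{nonlocal Caccioppoli inequality}. Fix $k \ge 0$, set $w := (u-k)_+$, and for $1/2 \le \rho < r \le 1$ choose $\tau \in C_c^\infty(B_{(r+\rho)/2})$ with $\tau \equiv 1$ on $B_\rho$ and $|\nabla\tau| \le C/(r-\rho)$. Testing $\cE^K(u,w\tau^2) \le 0$ (admissible after a standard truncation/density argument, since $w\tau^2 \ge 0$ has compact support in $B_1 \subset B_2$), decomposing $\R^n\times\R^n$ into $B_r\times B_r$ and its complement, and combining the standard algebraic inequality for $(a-b)(a_+\tau^2(x)-b_+\tau^2(y))$ (with $a=u(x)-k$, $b=u(y)-k$) with the homogeneity bounds $\int_{\R^n\setminus B_\sigma}K \le C\sigma^{-2s}$ and $\int_{B_\sigma}|z|^2K(z)\,\d z \le C\sigma^{2-2s}$ from \eqref{eq:Gupper}, one arrives at
\[
\cE^K(w\tau,w\tau) \;\le\; \frac{C}{(r-\rho)^{2s}}\int_{B_r} w^2\,\d x \;+\; C\Big(\sup_{x\in B_{(r+\rho)/2}}\int_{\R^n\setminus B_r}(u(y)-k)_+\,K(x-y)\,\d y\Big)\int_{B_r} w\,\d x .
\]
Here it is essential that the far-field contribution enters only \emph{linearly} in the outside values. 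Bounding $(u(y)-k)_+ \le |u(y)|$, splitting $\{y\notin B_r\}$ into the region $\{|x-y|\le 2\}$ and the dyadic shells $\{2^j\le|x-y|<2^{j+1}\}$, and applying H\"older together with the scaling identity $\|K\|_{L^p(B_{2T}\setminus B_T)}\le C\Lambda\,T^{-2s-n/q}$ (consequence of \eqref{eq:hom} and \eqref{eq:Kupper-p}) — i.e.\ the proof of \autoref{lemma:tail-est-p} in a shifted ball — one controls the last supremum by $C(r-\rho)^{-2s-n/q}M$.

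Now I would run the De Giorgi iteration. Set $\rho_j := \tfrac12(1+2^{-j})$, $k_j := d(1-2^{-j})$, $w_j := (u-k_j)_+$, $Y_j := \int_{B_{\rho_j}}w_j^2\,\d x$, with $d>0$ to be fixed. Applying the Caccioppoli inequality to $w_{j+1}$ on the pair $(\rho_{j+1},\rho_j)$, then the Sobolev inequality \eqref{eq:Sob}, then H\"older on the sublevel set $\{u>k_{j+1}\}\cap B_{\rho_j}$ together with Chebyshev's inequality $|\{u>k_{j+1}\}\cap B_{\rho_j}|\le (d\,2^{-j-1})^{-2}Y_j$ (and choosing $d\ge M$), leads to a recursion $Y_{j+1}\le C\,b^{j}\,d^{-4s/n}\,Y_j^{1+2s/n}$ for some $b>1$ depending only on $n,s,q$. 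By the classical fast-convergence lemma, $Y_j\to 0$ as soon as $Y_0=\int_{B_1}u_+^2 \le c_0\,d^2$ for a suitable $c_0=c_0(n,s,\lambda,\Lambda,q)>0$; choosing $d:=\max\{c_0^{-1/2}\|u_+\|_{L^2(B_1)},\,M\}$ yields $\sup_{B_{1/2}}u\le d$, which is \eqref{eq:locbd-prelim} for $R=1$ (after bounding $\|u_+\|_{L^2(B_1)}\le\|u\|_{L^2(B_1)}$). Carrying the dependence on $r-\rho$ through the same iteration, now between $B_\rho$ and $B_r$, gives the flexible estimate $\sup_{B_\rho}u \le C(r-\rho)^{-\gamma}\|u_+\|_{L^2(B_r)} + C M$ for all $1/2\le\rho<r\le1$ and some $\gamma=\gamma(n,s)$.

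Finally, for \eqref{eq:locbd-p}, where in addition $u\in L^{q}_{2s-\eps}(\R^n)$ and $u\ge0$ in $B_1$, I would insert the interpolation $\|u_+\|_{L^2(B_r)}^2\le(\sup_{B_r}u)\,\|u\|_{L^1(B_r)}$ into the flexible estimate and use Young's inequality to absorb half of $\sup_{B_r}u$, obtaining $\sup_{B_\rho}u \le \tfrac12\sup_{B_r}u + C(r-\rho)^{-2\gamma}\|u\|_{L^1(B_r)} + CM$; since $\rho\mapsto\sup_{B_\rho}u$ is finite on $[1/2,1)$ by \eqref{eq:locbd-prelim}, the usual iteration lemma for such inequalities gives $\sup_{B_{1/2}}u \le C\|u\|_{L^1(B_1)} + CM \le C\dashint_{B_1}u + CM$. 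I expect the main obstacle to be the Caccioppoli step: establishing it without any pointwise lower bound on $K$, and — the delicate point — arranging the nonlocal cross-terms so that the outside values enter only through $\int_{\R^n\setminus B_r}(u(y)-k)_+K(x-y)\,\d y$, which is then tamed purely by the $L^p$-scaling of $K$ and \autoref{lemma:tail-est-p}; the Sobolev-via-Fourier input and the De Giorgi/interpolation machinery are then entirely classical.
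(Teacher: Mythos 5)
Your proposal is correct and follows essentially the same route as the paper: a De Giorgi iteration built on a nonlocal Caccioppoli inequality, the Sobolev embedding \eqref{eq:Sob} obtained via the Fourier symbol and \autoref{lemma:Fourier}, control of the nonlocal tail through (the shifted version of) \autoref{lemma:tail-est-p}, and then the $L^2\to L^1$ improvement via interpolation, Young's inequality, and the standard iteration lemma of Giaquinta--Giusti. The only difference is presentational: the paper imports the Caccioppoli/iteration step from \cite{KaWe22b} rather than rederiving it, whereas you sketch it directly.
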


\begin{proof}
We follow the arguments in the proofs of \cite[Theorem 3.6, resp. Theorem 6.1]{KaWe22b} after translating them to the elliptic setting. In fact, for any $0 < k < l$ and $R/2 \le r \le R$ and $0 < \rho \le r \le r+\rho \le R$, we obtain
\[\begin{split}
\Vert (u-l)_+^2 \Vert_{L^1(B_r)} \le c (l-k)^{-\frac{4s}{n}} \rho^{-2s} \left( 1 + \frac{\sup_{x \in B_{r+\frac{\rho}{2}}}\tail_K(u;\rho/2,x)}{l-k} \right) \Vert (u-k)_+^2 \Vert_{L^1(B_{r+\rho})}^{1 + \frac{2s}{n}}.
\end{split}\]
This result holds true for kernels satisfying \eqref{eq:Gupper} and the Sobolev inequality \eqref{eq:Sob}. Note that \eqref{eq:Sob} holds true in our case by the same argument, as in the proof of \autoref{lemma:wHI-p}.\\
Next, let us apply \autoref{lemma:tail-est-p} to obtain:
\[\begin{split}
\sup_{x \in B_{r+\frac{\rho}{2}}}\tail_K(u;\rho/2,x) &\le c\sup_{x \in B_{r+\frac{\rho}{2}}} \left[ \rho^{2s-\eps} \sup_{\bar{R} \ge \rho/2} \frac{\left( \dashint_{B_{\bar{R}}(x)} |u|^q \d x\right)^{1/q}}{\bar{R}^{2s-\eps}}\right]\\
&\le c \left(\frac{R}{\rho}\right)^{\frac{n}{q}} \sup_{\bar{R} \ge R} \frac{\left( \dashint_{B_{\bar{R}}} |u|^q \d x\right)^{1/q}}{\bar{R}^{2s-\eps}}.
\end{split}\]
From here, the proof follows by a standard iteration argument upon defining $l_i = M(1-2^{-i})$, $l_0 = 0$, and $\rho_i = 2^{-i-1}R$, $r_{i+1} = r_i - \rho_{i+1}$, $r_0 = R$, and $A_i = \Vert (u-l_i)_+^2 \Vert_{L^1(B_{r_i})}$, where
\[\begin{split}
M = \sup_{\bar{R} \ge R} \frac{\left( \dashint_{B_{\bar{R}}} |u|^q \d x\right)^{1/q}}{\bar{R}^{2s-\eps}} + C R^{-\frac{n}{2}} A_0^{1/2}
\end{split}\]
for a large enough constant $C > 0$, depending only on $n,s,\lambda,\Lambda,q$. In fact, the aforementioned estimates and choices yield for some $\gamma > 1$ depending only on $n,s,q$:
\[\begin{split}
A_i \le \frac{c 2^{2si} R^{-2s} }{(l_i - l_{i-1})^{\frac{4s}{n}}} \left(1  + 2^{in} \frac{\sup_{\bar{R} \ge R} \frac{\left( \dashint_{B_{\bar{R}}} |u|^q \d x\right)^{1/q}}{\bar{R}^{2s-\eps}}}{l_i - l_{i-1}} \right) A_{i-1}^{1 + \frac{2s}{n}} \le \frac{c 2^{\gamma i} R^{-2s} }{M^{\frac{4s}{n}}} A_{i-1}^{1 + \frac{2s}{n}},
\end{split}\]
and, upon choosing $C > 0$ large enough:
\[\begin{split}
A_0 \le C^{-\frac{1}{2}} R^n M^2 \le \left( \frac{c R^{-2s} }{M^{\frac{4s}{n}}} \right)^{-\frac{n}{2s}} \left(2^{\gamma}\right)^{-\left(\frac{n}{2s}\right)^2},
\end{split}\]
By \cite[Lemma 7.1]{Giu03}, it holds $A_i \searrow 0$, which implies that $\sup_{B_{R/2}} u \le M$, and yields \eqref{eq:locbd-prelim}.\\
To prove \eqref{eq:locbd-p}, we observe that by assumption, the right hand side in \eqref{eq:locbd-prelim} is finite. Therefore, the desired result follows by standard covering and interpolation arguments based on \cite[Lemma 1.1]{GiGi82} (see e.g. \cite[Proof of Theorem 6.2]{KaWe22b} or \cite[Proof of Theorem 6.9]{Coz17}).
\end{proof}

\section{Basic properties of solutions}
\label{sec:basic}

The main result of this article (see \autoref{thm:opt-reg}), is formulated for weak solutions to the obstacle problem
\[\begin{split}
\min \{ L u , u - \phi \} = 0~~ \text{ in } \R^n,
\end{split}\]
where $\phi \in C_c^{2,\eps}(\R^n)$ for some $\eps > \max\{2s-1 , 0 \}$. 
As explained before, we note that the consideration of weak solutions to the nonlocal obstacle problem is in contrast to \cite{CRS17}, \cite{FeRo23}, and \cite{FRS23}, where viscosity solutions were analyzed. 
Thus, we need some preliminary results on weak solutions, which we provide below.


Let us define 
\[\begin{split}
H^s_{\phi}(\R^n) &:= \left\{ v \in H^s(\R^n) :v \ge \phi ~~ \text{ in } \R^n \right\},
\end{split}\]
the solution space associated to the obstacle problem. 

\begin{definition}
We say that $u$ is a weak solution to the obstacle problem 
\[\begin{split}
\min \{ L u , u- \phi \} ~~ \text{ in } \R^n,
\end{split}\]
if $u \in H^s_{\phi}(\R^n)$, and
\begin{align}
\label{eq:weak-sol}
\cE^K(u,v-u) \ge 0 ~~ \forall v \in H^s_{\phi}(\R^n).
\end{align}
\end{definition}

\begin{remark}
\label{remark:weak-sol-ex-un}
One can prove that a unique weak solution $u \in H^s_{\phi}(\R^n)$ to the obstacle problem exists. Moreover, the unique weak solution $u$ solves in the weak sense (see \autoref{def:weak-sol}):
\[\begin{split}
L u &= 0~~ \text{ in } \{ u > \phi\},\\
L u &\ge 0 ~~ \text{ in } \R^n.  
\end{split}\]
This was proved for $L = (-\Delta)^s$ in \cite{Sil07} and for more general nonlocal operators comparable to the fractional $p$-Laplacian in \cite{KKP16}. The proof in our setting goes by the same arguments, as in \cite{KKP16}.
\end{remark}

An important characterization of weak solutions to the obstacle problem is that they are the least weak supersolution above the obstacle, in the following sense:

\begin{lemma}
\label{lemma:least-super}
Assume \eqref{eq:Glower} and \eqref{eq:Gupper}. Let $u \in H^s_{\phi}(\R^n)$ be a weak solution to the obstacle problem
\[\begin{split}
\min \{ Lu , u- \phi \} = 0~~ \text{ in } \R^n.
\end{split}\]
Let $v \in H^s(\R^n)$ be a weak supersolution to 
\[\begin{split}
Lv \ge 0 ~~ \text{ in } \R^n.
\end{split}\]
Moreover, assume that $\min \{u,v\} \in H^s_{\phi}(\R^n)$. Then, $u \le v$ a.e. in $\R^n$.
\end{lemma}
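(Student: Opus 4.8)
The plan is to exploit the variational characterization of $u$ as the minimizer of the energy over $H^s_\phi(\R^n)$, together with the fact that $v$ is a supersolution, by using $\min\{u,v\}$ as a competitor. First I would set $w := \min\{u,v\} = u - (u-v)_+$ and note that by hypothesis $w \in H^s_\phi(\R^n)$, so $w$ is an admissible competitor in the variational inequality \eqref{eq:weak-sol} satisfied by $u$. This gives $\cE^K(u, w - u) \ge 0$, i.e. $\cE^K\big(u, -(u-v)_+\big) \ge 0$, hence
\[
\cE^K\big(u, (u-v)_+\big) \le 0.
\]
On the other hand, since $(u-v)_+ \in H^s(\R^n)$ (it is the positive part of an $H^s$ function, and it is compactly "localized" enough to be a legitimate test function — note $u - v \in H^s(\R^n)$ so $(u-v)_+ \in H^s(\R^n)$), and since $(u-v)_+ \ge 0$, the fact that $v$ is a weak supersolution to $Lv \ge 0$ in $\R^n$ (tested against the nonpositive function $-(u-v)_+$, as in \autoref{def:weak-sol}) yields $\cE^K\big(v, -(u-v)_+\big) \le 0$, that is,
\[
\cE^K\big(v, (u-v)_+\big) \ge 0.
\]

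Subtracting, I obtain $\cE^K\big(u - v, (u-v)_+\big) \le 0$. The next step is the key algebraic observation that the bilinear form, evaluated on a function and its positive part, controls the Dirichlet energy of the positive part from below:
\[
\cE^K\big(u-v, (u-v)_+\big) \ge \cE^K\big((u-v)_+, (u-v)_+\big) \ge 0,
\]
which follows pointwise from the elementary inequality $\big(g(x) - g(y)\big)\big(g_+(x) - g_+(y)\big) \ge \big(g_+(x) - g_+(y)\big)^2$ for all real $g(x), g(y)$ (writing $g = u-v$), integrated against the nonnegative kernel $K(x-y)$. Combining the two chains of inequalities forces $\cE^K\big((u-v)_+, (u-v)_+\big) = 0$. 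By \autoref{lemma:Fourier}, $\cE^K(\cdot,\cdot)$ is comparable to the $\dot H^s$ seminorm, so $(u-v)_+$ is constant; since $(u-v)_+ \in H^s(\R^n) \subset L^2(\R^n)$ (using $2s < n$, or simply that both $u,v \in H^s(\R^n)$), the only constant value possible is $0$, hence $(u-v)_+ \equiv 0$, i.e. $u \le v$ a.e. in $\R^n$.

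The main obstacle I anticipate is purely bookkeeping at the level of which functions are legitimate test functions: one must check that $(u-v)_+$ genuinely lies in $H^s(\R^n)$ and — more delicately — that using it (rather than a compactly supported function) in the defining inequalities of \autoref{def:weak-sol} and \eqref{eq:weak-sol} is justified, since the supersolution property is stated for test functions with compact support in $\Omega = \R^n$. This is handled by a routine density/truncation argument: approximate $(u-v)_+$ by $\eta_R (u-v)_+$ with $\eta_R$ a cutoff equal to $1$ on $B_R$, use the finiteness of $\cE^K(u,u)$ and $\cE^K(v,v)$ (both hold since $u,v \in H^s(\R^n)$, via \autoref{lemma:Fourier}) to pass to the limit $R \to \infty$ by dominated convergence in the bilinear form. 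No genuinely new idea is needed beyond this; the argument is a standard comparison-via-competitor argument, and the only place the hypotheses \eqref{eq:Glower}–\eqref{eq:Gupper} enter is through the coercivity/finiteness provided by \autoref{lemma:Fourier}.
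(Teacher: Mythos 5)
Your proof is correct and follows essentially the same route as the paper's: use $\min\{u,v\}$ as a competitor in the variational inequality, test the supersolution property of $v$ against $(u-v)_+$ (justified via a cutoff $\psi_R$ and dominated convergence, exactly as in the paper), add the two inequalities, and conclude $(u-v)_+\equiv 0$. The only cosmetic difference is that you spell out the elementary inequality $\bigl(g(x)-g(y)\bigr)\bigl(g_+(x)-g_+(y)\bigr)\ge \bigl(g_+(x)-g_+(y)\bigr)^2$, which the paper leaves implicit.
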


\begin{proof}
Since $u$ is a weak solution to the obstacle problem, and $\min \{u,v \} \in H^s_{\phi}(\R^n)$, it holds
\[\begin{split}
\cE^K(u,\min \{u,v \} -u) \ge 0.
\end{split}\]
Moreover, since $v$ is a weak supersolution and $u - \min \{u,v \} \in H^s(\R^n)$ is nonnegative, it holds
\[\begin{split}
\cE^K(v,[u-\min \{ u,v \} ]\psi_R) \ge 0,
\end{split}\]
where, for $R > 0$, we chose $\psi_R \in C_c^{\infty}(B_{R+1})$ with $\psi \equiv 1$ in $B_R$ and $\Vert \psi_R \Vert_{C^1(\R^n)} \le 2$.
By taking the limit $R \to \infty$, an application of dominated convergence theorem, and adding the two previous lines, we obtain
\[\begin{split}
\cE^K(u-v , \min \{ u,v \} - u) \ge 0,
\end{split}\]
which implies that $|\{ u > v\}| = 0$, as desired.
\end{proof}

Let us close this section by the definition of distributional solutions to the obstacle problem.

\begin{definition}
Let $f \in L^{1}_{loc}(\R^n)$. We say that $u \in L^{\infty}_{2s-\eps}(\R^n)$ for some $\eps \in (0,2s)$ is a distributional solution to
\[\begin{split}
\min \{ L u - f , u \} = 0 ~~ \text{ in } \R^n,
\end{split}\]
if $u \ge 0$ and solves 
\[\begin{split}
L u &= f ~~ \text{ in } \{ u > 0 \},\\
L u &\ge f ~~ \text{ in } \R^n
\end{split}\]
in the distributional sense, according to \autoref{def:distr-sol}.
\end{definition}

\subsection{Semiconvexity}
 
Having at hand the characterization of a solution to the obstacle problem as the smallest supersolution lying above the obstacle (see \autoref{lemma:least-super}), we are now able to prove the semiconvexity and Lipschitz regularity of weak solutions.

\begin{lemma}
\label{lemma:semiconvexity}
Assume \eqref{eq:Glower} and \eqref{eq:Gupper}. Let $u \in H^s_{\phi}(\R^n)$ be a weak solution to the obstacle problem
\[\begin{split}
\min \{ Lu , u - \phi \} = 0 ~~ \text{ in } \R^n.
\end{split}\]
Then, the following hold true:
\begin{itemize}
\item[(i)] $u$ is Lipschitz continuous with
\[\begin{split}
\Vert u \Vert_{C^{0,1}(\R^n)} \le \Vert \phi \Vert_{C^{0,1}(\R^n)}.
\end{split}\]
\item[(ii)] $u$ is semiconvex with
\[\begin{split}
\partial_{ee}^2 u \ge - \Vert \phi \Vert_{C^{1,1}(\R^n)} ~~ \text{ in } \R^n \qquad \forall e \in \mathbb{S}^{n-1}.
\end{split}\]
\end{itemize}
\end{lemma}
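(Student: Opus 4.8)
The plan is to use the characterization of $u$ as the least weak supersolution above the obstacle (see \autoref{lemma:least-super}) together with a translation/comparison argument. The key observation is that both properties (i) and (ii) are statements about the behavior of $u$ under translations, so the natural strategy is to compare $u$ with suitable translates and reflections of itself, plus a correction coming from the regularity of $\phi$.

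For part (i), fix $h \in \R^n$ and set $L := \Vert \phi \Vert_{C^{0,1}(\R^n)}$. I would consider the competitor $v(x) := u(x+h) + L|h|$. Since $L$ is translation invariant, $v$ is again a weak supersolution to $Lv \ge 0$ in $\R^n$. Moreover $v(x) = u(x+h) + L|h| \ge \phi(x+h) + L|h| \ge \phi(x)$, using the Lipschitz bound on $\phi$, so $v \in H^s_\phi(\R^n)$ after checking $v \in H^s(\R^n)$ (true since translation is an isometry on $H^s$ and adding a constant does not affect the seminorm, while on a bounded modification one uses the standard cutoff as in \autoref{lemma:least-super}). One also needs $\min\{u,v\} \in H^s_\phi(\R^n)$, which holds because $\min\{u,v\} \ge \phi$ pointwise and $\min$ of two $H^s$ functions is in $H^s$. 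Then \autoref{lemma:least-super} gives $u(x) \le u(x+h) + L|h|$ for a.e. $x$; swapping the roles of $x$ and $x+h$ (i.e. replacing $h$ by $-h$) yields $|u(x+h)-u(x)| \le L|h|$, which is the claimed Lipschitz bound.

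For part (ii), fix $e \in \mathbb{S}^{n-1}$ and $h > 0$, and set $C := \Vert \phi \Vert_{C^{1,1}(\R^n)}$. The second difference quotient satisfies $\phi(x+he) + \phi(x-he) - 2\phi(x) \ge -C h^2$, so the competitor
\[
v(x) := \tfrac{1}{2}\big(u(x+he) + u(x-he)\big) + \tfrac{C}{2} h^2
\]
lies above $\phi$: indeed $v(x) \ge \tfrac12(\phi(x+he)+\phi(x-he)) + \tfrac{C}{2}h^2 \ge \phi(x)$. Since $L$ is linear and translation invariant, and a sum of weak supersolutions is a weak supersolution (and adding a constant does not change this), $v$ is a weak supersolution to $Lv \ge 0$ in $\R^n$; the membership $v \in H^s(\R^n)$ and $\min\{u,v\} \in H^s_\phi(\R^n)$ are checked as before. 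Applying \autoref{lemma:least-super} gives
\[
u(x) \le \tfrac{1}{2}\big(u(x+he) + u(x-he)\big) + \tfrac{C}{2} h^2 \qquad \text{for a.e. } x,
\]
i.e. $u(x+he) + u(x-he) - 2u(x) \ge -C h^2$. Since $u$ is continuous (by (i)), this holds for every $x$, and dividing by $h^2$ and letting $h \to 0$ in the sense of distributions yields $\partial^2_{ee} u \ge -C$ in $\R^n$.

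The main obstacle I expect is purely the bookkeeping of the function-space hypotheses of \autoref{lemma:least-super}: one must verify that the competitors $v$ genuinely belong to $H^s(\R^n)$ and that $\min\{u,v\} \in H^s_\phi(\R^n)$, which requires care because $v$ is obtained from $u$ by translations plus a nonzero additive constant (so $v \notin L^2(\R^n)$ a priori on all of $\R^n$ if one is careless). This is handled exactly as in the proof of \autoref{lemma:least-super}, by testing against cutoffs $\psi_R$ and passing to the limit $R \to \infty$ via dominated convergence; the additive constant drops out of all the bilinear forms $\cE^K$, so it causes no harm in the energy estimates. Once this is in place, both parts follow from a single application of the least-supersolution principle to an appropriate translate/average of $u$.
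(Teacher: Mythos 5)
Your proposal is correct and follows essentially the same route as the paper: both apply the least-supersolution characterization (\autoref{lemma:least-super}) to the competitors $u(\cdot+h)+\Vert\phi\Vert_{C^{0,1}}|h|$ for (i) and $\tfrac12\big(u(\cdot+h)+u(\cdot-h)\big)+C|h|^2$ for (ii), with the same resolution of the $H^s$ bookkeeping via cutoffs. The only omission is the $L^\infty$ part of the $C^{0,1}$ norm, which the paper obtains by additionally comparing with the constant supersolution $v_1\equiv\Vert\phi\Vert_{L^\infty(\R^n)}$.
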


\begin{proof}
The proof goes along the lines of the proof in \cite[Lemma 2.1]{CRS17}. The conclusion differs only slightly, since we work with weak solutions, instead of viscosity solutions. Indeed, to prove (i), we observe that
\[\begin{split}
v_1(x) = \Vert \phi \Vert_{L^{\infty}(\R^n)} ~~ \text{ and } ~~ v_2(x) = u(x+h) + \Vert \phi \Vert_{C^{0,1}(\R^n)}|h|, \qquad h \in \R^n,
\end{split}\]
are weak supersolutions to $L v_i \ge 0$ in $\R^n$ by \autoref{remark:weak-sol-ex-un} , and satisfy $v_i \ge \phi$.
Moreover, we have $\min \{ u,v_i \} \in H^s_{\phi}(\R^n)$. Thus, \autoref{lemma:least-super} implies that $u \le v_i$ a.e., which proves (i).\\
To prove (ii), one proceeds similar to the proof of (i), defining 
\[\begin{split}
v(x) = \frac{u(x+h) + u(x-h)}{2} + \Vert \phi \Vert_{C^{1,1}(\R^n)}|h|^2.
\end{split}\]
\end{proof}

Let us mention two direct consequences of \autoref{lemma:semiconvexity}:

\begin{remark}
Let $u$ be a weak solution to $\min \{ L u , u - \phi \} = 0$ in $\R^n$.
\begin{itemize}
\item[(i)] Then, we are able to evaluate $Lu$ in a pointwise way and we have the bound $0 \le Lu \le C$ for some $C > 0$ depending only on $n,s,\lambda,\Lambda,\Vert \phi \Vert_{C^{1,1}(\R^n)}$ (see also \autoref{lemma:distr-ptw-evaluate}).

\item[(ii)] Moreover, due to \cite[Lemma 2.2.27]{FeRo23} $u \in H^s_{\phi}(\R^n) \cap L^{\infty}(\R^n)$ is a distributional solution to $\min \{ L u , u - \phi \} = 0$ in $\R^n$ in the sense of \cite[Definition 2.2.20]{FeRo23}.
\end{itemize}
\end{remark}

\subsection{$C^{1,\tau}$ regularity of solutions}

The following lemma is a technical ingredient in some of the proofs in this article, such as the classification of blow-ups. However, we believe it to be of independent interest: It states that any solution to the obstacle problem is $C^{1,\tau}$ when $K$ satisfies \eqref{eq:Glower} and \eqref{eq:Gupper}.

\begin{proposition}
\label{thm:C1-tau}
Let $s\in(0,1)$ and $L$ be a general stable operator of the form \eqref{eq:L}-\eqref{eq:hom}-\eqref{eq:Glower}-\eqref{eq:Gupper}. 
Let $\phi \in C_c^{1,1}(\R^n)$ and $u$ be any weak solution to the obstacle problem
\[\begin{split}
\min \{ L u, u - \phi \} = 0 ~~ \text{ in } \R^n.
\end{split}\]
Then,  $u \in C^{1,\tau}(\R^n)$ and
\[\begin{split}
\Vert u \Vert_{C^{1+\tau}(\R^n)} \le C \Vert \phi \Vert_{C^{1,1}(\R^n)},
\end{split}\]
where $C > 0$ and $\tau \in (0,1)$ depend only on $n,s,\lambda,\Lambda$.
\end{proposition}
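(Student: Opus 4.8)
The plan is to prove the result in two stages: first establish that $u$ is $C^{2s-\epsilon}$ (or $C^{1,\tau}$ directly, depending on the size of $s$) via a global-boundedness-plus-interior-Schauder argument applied to $u$ itself, and then bootstrap to $C^{1,\tau}$ by differentiating the equation in the coincidence set's complement. Throughout I would rely on the facts already collected: by \autoref{remark:weak-sol-ex-un} we have $Lu = 0$ in $\{u > \phi\}$ and $Lu \ge 0$ in $\R^n$ in the weak sense; by \autoref{lemma:semiconvexity} the solution $u$ is globally Lipschitz with $\Vert u\Vert_{C^{0,1}(\R^n)} \le \Vert\phi\Vert_{C^{0,1}(\R^n)}$ and semiconvex with $\partial^2_{ee}u \ge -\Vert\phi\Vert_{C^{1,1}(\R^n)}$; and as noted in the Remark following it, $Lu$ can be evaluated pointwise with $0 \le Lu \le C(n,s,\lambda,\Lambda,\Vert\phi\Vert_{C^{1,1}})$ everywhere in $\R^n$. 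So the first key observation is that $u$ solves, pointwise and hence distributionally (via \autoref{lemma:ptw-distr}), the equation $Lu = g$ in all of $\R^n$ with $g := Lu \in L^\infty(\R^n)$, $\Vert g\Vert_{L^\infty} \le C\Vert\phi\Vert_{C^{1,1}}$, and moreover $u$ itself is globally Lipschitz hence in $L^\infty_{2s-\epsilon}(\R^n)$ with the right bound.

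The second step is to upgrade the regularity of $u$ using interior regularity theory for general stable operators satisfying \eqref{eq:Glower}--\eqref{eq:Gupper}. Since $Lu = g \in L^\infty_{loc}$ and $u \in L^\infty(\R^n) \subset L^\infty_{2s-\epsilon}(\R^n)$, the interior Schauder-type estimate (e.g.\ \cite[Proposition 2.4.4]{FeRo23}, applied with the $L^\infty$ right-hand side; this holds for the general stable class because of \autoref{lemma:Fourier}) gives $u \in C^{2s-\epsilon}_{loc}$, or $u \in C^{2s}_{loc}$ if $2s$ is not an integer and the right-hand side is treated as Hölder — in any case $u \in C^{2s^-}(\R^n)$ locally uniformly, and by the translation-invariance of the bounds (everything is controlled by $\Vert\phi\Vert_{C^{1,1}}$ uniformly in space) this is a global estimate $\Vert u\Vert_{C^{\min\{2s,1+\tau_0\}-\epsilon}(\R^n)} \le C\Vert\phi\Vert_{C^{1,1}}$. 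If $s \le 1/2$ this already essentially finishes the Hölder part but not the $C^1$ part; so the real work is the final bootstrap.

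For the bootstrap to $C^{1,\tau}$, I would follow the strategy of \cite[Lemma 2.1 / its bootstrap]{CRS17}, adapted to weak solutions. The idea: consider the incremental quotients $v_h := D_h u = \frac{u(\cdot+h) - u(\cdot)}{|h|}$. By semiconvexity, $v_h \ge -\Vert\phi\Vert_{C^{1,1}}$ uniformly, i.e.\ $w_h := v_h + \Vert\phi\Vert_{C^{1,1}}$ is globally nonnegative. In the open set $\{u > \phi\}$, $L u = 0$ does not immediately give a clean equation for $v_h$ because the set moves with $h$; but one can instead use that $Lu \le C$ everywhere together with $Lu \ge 0$ everywhere, and the structure of the obstacle problem, to show $L v_h$ is bounded (or one-signed) in a suitable sense on the appropriate region, or more robustly, invoke the optimal interior regularity for the obstacle problem by a compactness/blow-up contradiction argument: if $u$ were not $C^{1,\tau}$ with the claimed bound, a rescaling would produce a global solution of a limiting obstacle problem violating Liouville-type rigidity. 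The cleanest route, and the one I expect the authors take, is: since $u \in C^{2s-\epsilon}$ and $Lu \in L^\infty$ globally with $Lu = 0$ on $\{u>\phi\}$, apply the known optimal $C^{1,\alpha}$ (with $\alpha < s$, or $\alpha$ up to $2s-1$ when $s > 1/2$, here just needing \emph{some} $\tau>0$) regularity estimate for solutions of obstacle problems / for functions that are $L$-harmonic on an open set and semiconvex — but since that estimate is exactly what's not available in the literature for this general class, the honest proof must re-derive it. The main obstacle is precisely this: proving the gain from $C^{2s-\epsilon}$ to $C^{1,\tau}$ across the free boundary without a Harnack inequality. I expect this to be handled by an iteration/dyadic-scaling argument using only the weak Harnack inequality \autoref{lemma:wHI-p} and the local boundedness estimate \autoref{lemma:locbd-p} applied to $w_h = D_h u + \Vert\phi\Vert_{C^{1,1}} \ge 0$: one shows that the oscillation of $w_h$ (equivalently of $\nabla u$) decays geometrically on dyadic balls, using that $w_h$ is a nonnegative supersolution where convenient and a subsolution with controlled right-hand side elsewhere, together with the semiconvexity to control the tail terms. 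Summing the geometric decay and letting $h \to 0$ yields $\nabla u \in C^\tau$, with all constants proportional to $\Vert\phi\Vert_{C^{1,1}(\R^n)}$, as claimed.
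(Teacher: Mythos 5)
Your first step (pointwise bounds $0\le Lu\le C\Vert\phi\Vert_{C^{1,1}}$ plus interior estimates to get $u\in C^{\min\{2s,1\}-\epsilon}$) is fine, and your identification of the real difficulty — the gain from semiconvexity to $C^{1,\tau}$ across the free boundary — is correct. But the mechanism you propose for that step has a genuine gap. You suggest an oscillation-decay iteration based on the weak Harnack inequality (\autoref{lemma:wHI-p}) and the local boundedness estimate (\autoref{lemma:locbd-p}) applied to $w_h = D_h u + \Vert\phi\Vert_{C^{1,1}}\ge 0$. This cannot work as stated: $w_h$ is a (distributional) supersolution only in $\{u>\phi\}$, not in full balls centered at free boundary points, so the weak Harnack inequality is not applicable where you need it; moreover, geometric oscillation decay requires applying a Harnack-type inequality to $\sup w_h - w_h$ as well, i.e.\ essentially a full Harnack inequality, which the paper emphasizes \emph{fails} for these operators (and \autoref{lemma:wHI-p} additionally requires $2s<n$, a restriction absent from the statement). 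Your alternative suggestion of a compactness/blow-up argument is also not what is needed here, since the relevant Liouville-type rigidity is precisely what \autoref{thm:classification} establishes later, using this proposition as an ingredient.

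What the paper actually does (Lemma \ref{lemma:C1-tau-help} and Proposition \ref{prop:C1-tau}) is a quantitative adaptation of the touching-point argument of \cite[Proposition 2.2]{CRS17}, with no Harnack inequality at all. One assumes the dyadic quantity $\theta(r)=\sup_{r'\ge r}(r')^{-\tau}\sup_{B_{r'}}|\nabla u|$ exceeds $2$ at some scale, rescales, and finds a point $x_0\in\{\bar u>0\}$ where $v=-D_{h_0}\bar u$ plus a small bump $3\eps\eta$ attains its maximum. One then bounds $Lv(x_0)$ from below: the decisive positive contribution comes from an annular region inside a cone of directions $\mathcal{C}_\mu$ in which semiconvexity forces $v(y)\le 2^\tau(1-\sigma)+\delta|y|/2$, strictly below $v(x_0)$. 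The genuinely new ingredient needed for kernels that are not fully supported is \autoref{lemma:K-mass}, which guarantees that for \emph{every} direction $e$ the kernel carries mass at least $\lambda/2$ on $\{\theta\cdot e\ge\delta_0\}$, so that this cone always contributes $\gtrsim M^{-2s}$ regardless of where $K$ vanishes. This produces $Lv(x_0)\ge c>0$, contradicting $Lv(x_0)\le\delta$ (evaluated pointwise via \autoref{lemma:distr-ptw-evaluate}). Your proposal does not identify this kernel-mass quantification, which is the step that replaces the lower bound \eqref{eq:Kcomp} used in \cite{CRS17}.
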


We will actually prove the following:

\begin{proposition}
\label{prop:C1-tau}
Assume \eqref{eq:Glower} and \eqref{eq:Gupper}. Let $K$ be homogeneous. Let $\alpha \in (0, s)$. Let $u \in C^{0,1}(\R^n)$ be such that for some $K > 0$:
\[\begin{split}
u &\ge 0 ~~ \text{ in } B_2,\\
D^2 u &\ge -K \mathrm{Id} ~~ \text{ in } B_2,\\
L(D_h u) &\ge -K ~~ \text{ in } \{ u > 0\} \cap B_2 \text{ in the distributional sense } ~~ \forall h \in \R^n,\\
|\nabla u| &\le K(1 + |x|^{s+\alpha}) ~~ \text{ in } \R^n.
\end{split}\]
Then, there exist $c > 0$ and $\tau \in (0,1)$ depending only on $n,s,\alpha,\lambda,\Lambda$ such that
\[\begin{split}
\Vert u \Vert_{C^{1,\tau}(B_{1/2})} \le c K.
\end{split}\]
\end{proposition}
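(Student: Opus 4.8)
The plan is to reduce the whole statement to a quantitative oscillation decay for the directional derivatives $v=\partial_e u$, $e\in\mathbb{S}^{n-1}$, at the points of $B_{1/2}$, using semiconvexity together with the equation for $\partial_e u$ and the weak Harnack / local boundedness lemmas of Section~\ref{sec:prelim}. First I would extract from hypothesis (iii) that it holds for \emph{every} increment $h$, in particular for $h=\pm te$ with $t\downarrow0$; since $u\in C^{0,1}$ and $|\nabla u|\le K(1+|x|^{s+\alpha})$ with $\alpha<s$, the difference quotients $D_{te}u$ stay locally bounded and are dominated against $L\eta$ (for $\eta\in C_c^\infty(\{u>0\}\cap B_2)$) by an integrable majorant — here $\alpha<s$ is exactly what makes $\int|L\eta|(1+|x|^{s+\alpha})<\infty$. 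Passing to the limit $t\downarrow0$ in the distributional inequality, for both $\pm e$, gives $|L(\partial_e u)|\le K$ in $\{u>0\}\cap B_2$ distributionally, for every $e$ (Lemmas \ref{lemma:ptw-distr}, \ref{lemma:distr-ptw-evaluate} let me move between pointwise and distributional formulations). Normalising $K=1$, I record the three facts that the argument rests on: (a) $v:=\partial_e u$ is a distributional solution of $|Lv|\le1$ in $\{u>0\}\cap B_2$ and $v\equiv0$ in $\mathrm{int}\{u=0\}$; (b) $\partial_{e'}\partial_e u\ge-1$ in $B_2$ for all $e'$; (c) $\|v\|_{L^\infty(B_{3/2})}\le C(n,s,\alpha)$ and $v\in L^\infty_{s+\alpha}(\R^n)$, whence by \autoref{lemma:tail-est-p} (with $\eps\in(0,s-\alpha)$) one has $\tail_K(v;r,x_0)\le Cr^{2s-\eps}$ for $x_0\in B_1$, $r\in(0,1)$.

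Next I would show that, for fixed $e$, the truncations $v^\pm:=\max\{\pm v,0\}$ are weak subsolutions of $L(\cdot)\le1$ in $B_{3/2}$: inside $\{u>0\}$ both $v$ and $0$ are weak subsolutions of $L(\cdot)\le1$ and the maximum of two subsolutions is again a subsolution (the nonlocal analogue of ``the maximum of subharmonic functions is subharmonic'', proved exactly as in \autoref{lemma:truncation-subsol}); while $v^\pm\ge0$, $v^\pm\equiv0$ on $\mathrm{int}\{u=0\}$, and for $x_0\notin\{u>0\}$ one has $Lv^\pm(x_0)=-\int v^\pm(x_0+y)K(y)\,\d y\le0$, so the gluing in \autoref{lemma:truncation-subsol} upgrades this to all of $B_{3/2}$. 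In particular \autoref{lemma:locbd-p} and, applied to $\sup v^\pm-v^\pm$, \autoref{lemma:wHI-p} become available (we may assume $2s<n$; the case $n=1$, $s\ge\tfrac12$ is easier and follows from interior regularity). With this in hand, the goal becomes: there are $\rho\in(0,\tfrac14)$, $\tau\in(0,1)$ and $C_1$, depending only on $n,s,\alpha,\lambda,\Lambda$, such that $\osc_{B_{\rho^k}(x_0)}\partial_e u\le C_1\rho^{k\tau}$ for all $x_0\in B_{1/2}$, $e\in\mathbb{S}^{n-1}$, $k\ge0$ with $\rho^k\le\tfrac14$; by the usual covering argument this yields $[\nabla u]_{C^\tau(B_{1/2})}\le C$, which together with $\|\nabla u\|_{L^\infty(B_{1/2})}\le C$ gives the Proposition.

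The decay I would prove by induction on $k$ using the scale-invariant rescaling $u\mapsto\rho^{-k}u(\rho^k\,\cdot\,)$, which preserves $u\ge0$ and $\|\nabla u\|_{L^\infty(B_R)}\le2R^{s+\alpha}$ but \emph{improves} the remaining constants to $D^2u\ge-\rho^k\mathrm{Id}$ and $|LD_hu|\le\rho^{2sk}=:\eta$. At unit scale the inductive step splits into two cases, and the \emph{contact-point case} is the one I expect to be the real obstacle. There $z_0\in\overline{B_{1/2}}$ has $u(z_0)=0$, and I would exploit semiconvexity along the $e$-axis through $z_0$: $g(t):=u(z_0+te)$ satisfies $g\ge0$, $g(0)=0$, $g''\ge-\eta$, so $g(t)+\tfrac\eta2t^2$ is convex with minimum at $0$, whence $\partial_e u(z_0+te)\ge-\eta t$ for $t>0$ and $\partial_e u(z_0+te)\le\eta|t|$ for $t<0$; in particular the one-sided derivatives satisfy $D_e^+u(z_0)\ge0\ge D_e^-u(z_0)$, and combined with the inductively assumed oscillation decay this forces $v(z_0)=0$. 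Propagating the one-sided bound over a cone of directions near $-e$ (resp.\ near $e$), and using $|\nabla u|\le C$, shows that $v^+$ (resp.\ $v^-$) is $O(\eta+\sigma)$-small on a subset of $B_{1/2}(z_0)$ of definite measure, for a fixed small aperture $\sigma$. Feeding this into the weak Harnack inequality (\autoref{lemma:wHI-p}) for the nonnegative supersolution $\sup_{B_{1/2}(z_0)}v^+-v^+$, after subtracting the nonlocal tail — which by fact (c) and $v(z_0)=0$ is $O(\rho^{k\tau})$ in the rescaled frame — yields $\sup_{B_{1/4}(z_0)}v^+\le(1-c)\sup_{B_{1/2}(z_0)}v^++C\eta+C\rho^{k\tau}$, and symmetrically for $v^-$. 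Since $v(z_0)=0$ gives $\osc_{B_r(z_0)}v=\sup_{B_r(z_0)}v^++\sup_{B_r(z_0)}v^-$, adding produces the geometric improvement $\osc_{B_{1/4}(z_0)}v\le(1-c)\osc_{B_{1/2}(z_0)}v+C\eta+C\rho^{k\tau}$; telescoping (with $\rho=\tfrac14$ and $\tau$ below an explicit threshold depending on $c$, $2s$ and the interior Hölder exponent, and $C_1$ large) closes the induction at contact points.

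Finally I would handle an interior point $x_0\in B_{1/2}\cap\{u>0\}$ by anchoring at the scale $\delta:=\dist(x_0,\{u=0\})$: for radii $r\gtrsim\delta$ one bounds $\osc_{B_r(x_0)}v$ by $\osc_{B_{Cr}(z_0)}v$ for a nearby contact point and uses the contact-point estimate; for $r\lesssim\delta$ one uses the interior Hölder estimate for stable operators satisfying \eqref{eq:Glower}--\eqref{eq:Gupper} (e.g.\ \cite{ImSi20,FeRo23}) applied to $|Lv|\le\eta$ in $B_{\delta/2}(x_0)$, noting that $\|v\|_{L^\infty(B_{\delta/2}(x_0))}\lesssim\osc_{B_{2\delta}(x_0)}v\lesssim C_1\delta^\tau$ because $v$ vanishes at the contact point realising $\delta$ on $\partial B_\delta(x_0)$; this patching is standard. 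To summarise the difficulties: the failure of the Harnack inequality is precisely what rules out the classical oscillation-decay step and forces the combination of the one-sided behaviour imposed by semiconvexity with the weak Harnack inequality, whose loss of a power of integrability both keeps the exponent $\tau$ strictly below $s$ and makes the growth hypothesis with $\alpha<s$ (used to control tails) indispensable; the other delicate point is the purely distributional/weak bookkeeping in the first two steps, which is exactly what Lemmas \ref{lemma:truncation-subsol}, \ref{lemma:ptw-distr}, \ref{lemma:distr-ptw-evaluate} are designed to carry.
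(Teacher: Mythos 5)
Your route is genuinely different from the paper's. The paper does not run any oscillation-decay iteration and never invokes the weak Harnack inequality here: it proves the single growth lemma \autoref{lemma:C1-tau-help} ($|\nabla u(x)|\le 2|x|^{\tau}$ under normalized hypotheses) by a contradiction/touching-point argument in the spirit of \cite[Proposition 2.2]{CRS17}. One locates a maximum point $x_0$ of $-D_{h_0}\bar u+3\eps\eta$, checks that $x_0\in\{\bar u>0\}$ (so the equation is only ever used at points of the positivity set), evaluates $L$ there classically via \autoref{lemma:distr-ptw-evaluate}, and gets a positive lower bound on $L(-D_{h_0}\bar u)(x_0)$ by combining the semiconvexity cone estimate with \autoref{lemma:K-mass}; this contradicts $L(D_hu)\ge-\delta$. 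The proposition then follows by rescaling, truncation, and interior estimates applied to $D_hu$. Your De Giorgi-type scheme at contact points is a legitimate alternative in principle, and your preliminary reductions (two-sided distributional bound $|L\partial_eu|\le K$ in $\{u>0\}$ by letting $h=\pm te\to0$, tail control from $\alpha<s$) are fine, but the core of the argument has gaps as written.

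First, the claim that $v^{\pm}=(\pm\partial_eu)_+$ are weak subsolutions in all of $B_{3/2}$ is not justified. \autoref{lemma:truncation-subsol} truncates a solution inside a fixed open set where the equation holds; it does not glue a subsolution on $\{u>0\}$ to the zero function across $\partial\{u>0\}$. Your pointwise computation at $x_0\notin\{u>0\}$ only covers the interior of $\{u=0\}$, and $\{u>0\}$ together with that interior does not cover $B_{3/2}$; moreover $\partial_eu$ is a priori only an $L^{\infty}$ function vanishing a.e.\ on $\{u=0\}$, and an approximation by $(v-\delta)_+$ does not help because confining $\overline{\{v>\delta\}}$ to $\{u>0\}$ would require exactly the continuity of $\nabla u$ up to the free boundary that you are proving. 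This is the reason the paper (here and in \autoref{lemma:4.4.13}) arranges the barrier so that the equation is only tested at points of $\{u>0\}$. Second, the iteration does not close as displayed: tilting the semiconvexity rays into a cone of aperture $\sigma$ costs an error $\sigma\,\|\nabla u\|_{L^{\infty}}$, which under the crude bound $|\nabla u|\le C$ is a \emph{fixed} constant; it silently drops out of your improvement $\sup_{B_{1/4}}v^+\le(1-c)\sup_{B_{1/2}}v^++C\eta+C\rho^{k\tau}$, and keeping it the telescoping only gives $\osc\le C\sigma/c$, not decay. To repair this you must use the induction hypothesis for \emph{all} directions $e'$, together with the one-sided ray bounds at $z_0$, to get $\|\nabla u\|_{L^{\infty}(B_{\rho^k}(z_0))}\le CC_1\rho^{k\tau}$, so the cone error becomes $C\sigma C_1\rho^{k\tau}$ and is absorbable for $\sigma$ small. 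Third, \autoref{lemma:wHI-p} as stated applies to globally nonnegative supersolutions of $Lu\ge0$; for $M-v^+$ you need a variant with a right-hand side and a tail term for the part exceeding $M$ outside the ball, and that tail must come out as $\epsilon(\rho)C_1\rho^{k\tau}$ with $\epsilon(\rho)$ small (which forces $\rho$ small rather than $\rho=1/4$). None of these objections kills the strategy, but each is a step that fails in the form you gave it, and together they make the proposal substantially longer and more delicate than the paper's touching-point proof.
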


Recall that we denote $D_h u = \frac{u(\cdot + h) - u(\cdot)}{|h|}$ for any $h \in \R^n$.

Before we prove \autoref{prop:C1-tau}, let us first state the following auxiliary result, which is reminiscent of \cite[Lemma 2.1]{CRS17}.

\begin{lemma}
\label{lemma:C1-tau-help}
Assume \eqref{eq:Glower} and \eqref{eq:Gupper}. Let $K$ be homogeneous. Then, there exist $\tau \in (0,1)$ and $\delta > 0$, depending only on $n,s,\lambda,\Lambda$, such that the following holds true:\\
Let $u \in C^{0,1}(\R^n)$ with $u(0) = 0$ be such that 
\[\begin{split}
u &\ge 0 ~~ \text{ in } B_{1/\delta},\\
D^2 u &\ge -\delta\mathrm{Id} ~~ \text{ in } B_{1/\delta},\\
L(D_h u) &\ge -\delta ~~ \text{ in } \{ u > 0\} \cap B_2 \text{ in distribution } ~~ \forall h \in \R^n,\\
\Vert \nabla u \Vert_{L^{\infty}(B_R)} &\le R^{\tau} ~~ \forall R \ge 1.
\end{split}\]
Then, there exists $c > 0$ depending only on $n,s,\lambda,\Lambda$ such that
\[\begin{split}
|\nabla u(x)| \le 2|x|^{\tau} ~~ \forall x \in \R^n.
\end{split}\]
\end{lemma}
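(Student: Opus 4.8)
The plan is to prove this by contradiction through a blow-up (compactness) argument, in which everything is reduced to a Liouville-type rigidity statement for the resulting global profile. This last point is the crux: the interior Harnack inequality, the natural tool here, is unavailable for the operators we consider, and must be replaced by the weak Harnack inequality and the nonlocal maximum principle, which do survive.

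First I would pass to a dyadic formulation. Since $\Vert\nabla u\Vert_{L^{\infty}(B_R)}\le R^{\tau}$ already gives $|\nabla u(x)|\le|x|^{\tau}\le 2|x|^{\tau}$ for $|x|\ge1$, it suffices to prove a decay estimate of the form $M_j:=\sup_{B_{2^{-j}}}|\nabla u|\le C_1 2^{-j\tau}$ for all $j\ge0$; using $M_0\le1$ this holds automatically for $j\le c/\tau$, and we proceed by induction on $j$. Assume the lemma is false. Then there are $\tau_k\to0$, $\delta_k\to0$ and functions $u_k$ satisfying the hypotheses with parameter $\delta_k$ for which the decay estimate fails with exponent $\tau_k$; let $j_k$ be the smallest bad dyadic scale. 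The previous observation forces $j_k>c/\tau_k\to\infty$, hence $r_k:=2^{-j_k}\to0$. Rescaling,
\[
\tilde u_k(x):=\frac{u_k(r_k x)}{r_k M_{j_k}},\qquad M_{j_k}=\sup_{B_{r_k}}|\nabla u_k|,
\]
and using the inductive bound at scales $i<j_k$, one checks that $\tilde u_k(0)=0$, $\tilde u_k\ge0$, $0\in\partial\{\tilde u_k>0\}$, $\sup_{B_1}|\nabla\tilde u_k|=1$, $\sup_{B_R}|\nabla\tilde u_k|\le C(1+R^{\tau_k})$ for all $R>0$, the semiconvexity constant of $\tilde u_k$ is $\le\delta_k\to0$, and $L(D_h\tilde u_k)\ge-\delta_k$ in $\{\tilde u_k>0\}\cap B_{2/r_k}$ for every $h$, where $2/r_k\to\infty$.

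By Arzel\`a--Ascoli (the $\tilde u_k$ are uniformly Lipschitz on compacts) a subsequence converges locally uniformly to some $v$, which is convex (the semiconvexity constants vanish), nonnegative, satisfies $v(0)=0$, and has \emph{bounded} gradient, since $\tau_k\to0$. Moreover $v$ is not constant: choosing $x_k\in B_1$ with $|\nabla\tilde u_k(x_k)|\to1$ and using the semiconvexity of $\tilde u_k$, one gets $\tilde u_k(x_k+\tfrac14 e_k)\ge\tilde u_k(x_k)+\tfrac18$ for $k$ large, and this passes to the limit. Finally, by stability of distributional supersolutions under locally uniform convergence --- the uniform growth control is used here to dominate the nonlocal tails, together with $2/r_k\to\infty$ --- one obtains $L(D_h v)\ge0$ in $\{v>0\}$ in the distributional sense, for every $h\in\R^n$.

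The heart of the argument is then a Liouville theorem: any such $v$ must be constant. Fix $h$ and put $w:=D_h v$; since $v\ge0$ and $v(0)=0$ is a global minimum, $w\ge0$ on $\{v=0\}$, while wherever $w(x)<0$ one has $v(x)>v(x+h)\ge0$, so $w$ is a bounded supersolution of $Lw\ge0$ in the open set $\{v>0\}$ with $w\ge0$ on its complement. The nonlocal maximum principle in arbitrary, possibly unbounded, domains for bounded functions --- which holds for every operator satisfying \eqref{eq:Glower}--\eqref{eq:Gupper} and which plays here the role of the Harnack inequality in \cite{CRS17} --- forces $w\ge0$ everywhere. As this holds for all $h$, we get $v(x+h)\ge v(x)$ for all $x,h$, hence $v\equiv v(0)=0$, contradicting that $v$ is non-constant. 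This contradiction produces the desired $\tau,\delta$. The main obstacle is exactly this rigidity step, i.e.\ replacing Harnack by the maximum principle / weak Harnack inequality; subsidiary points are the construction of comparison functions adapted to the possibly missing directions of $K$ (where \autoref{lemma:K-mass} enters) and the need, signalled above, to send $\tau_k\to0$ so that the rescaled operator inequalities hold on balls exhausting $\R^n$.
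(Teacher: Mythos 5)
Your proposal takes a genuinely different route from the paper: a dyadic compactness/blow-up argument reducing the lemma to a Liouville-type rigidity statement for the limit profile $v$. The paper instead argues directly at a single well-chosen scale: it sets $\theta(r)=\sup_{r'\ge r}(r')^{-\tau}\sup_{B_{r'}}|\nabla u|$, rescales at a nearly extremal radius, touches $v=-D_{h_0}\bar u$ from above at an interior maximum $x_0$ of $v+3\eps\eta$, and then shows $Lv(x_0)\ge c>0$ by combining the semiconvexity together with $\bar u\ge 0=\bar u(0)$ (which forces $v$ to drop by a definite amount $\sigma$ on a cone of directions $\mathcal{C}_\mu$ around $h_0$) with \autoref{lemma:K-mass} (which guarantees the kernel charges an annular piece of that cone). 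This contradicts $Lv(x_0)\le\delta$. No limit is ever taken, and no maximum principle in an unbounded domain is needed.

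The gap in your argument is precisely the Liouville step. The ``nonlocal maximum principle in arbitrary, possibly unbounded, domains for bounded functions'' that you invoke is not a theorem, and it is false in the generality you need. Already for $L=(-\Delta)^s$ with $n>2s$, the function $w=\min\{1,|x|^{2s-n}\}-c$ (a truncated Riesz potential minus a small constant) is bounded, satisfies $Lw\ge 0$ in all of $\R^n$, and is nonnegative near the origin, yet $w\to -c<0$ at infinity; so the implication ``$Lw\ge0$ in $\Omega$, $w\ge0$ on $\R^n\setminus\Omega$, $w$ bounded $\Rightarrow w\ge0$'' fails whenever the complement $\R^n\setminus\Omega$ is small (e.g.\ a polar set). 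In your setting $\R^n\setminus\{v>0\}=\{v=0\}$ is only known to be a closed convex set containing $0$; it may well be a single point or a low-dimensional set, and nothing in your argument excludes this. Even when $\{v=0\}$ is large (say a half-space), a maximum principle in its unbounded complement for a merely bounded $w$ would still require a proof --- and for kernels satisfying only \eqref{eq:Glower}--\eqref{eq:Gupper}, with possibly degenerate directions, this is exactly the kind of Harnack-surrogate whose absence the paper is working around. You do not use the extra structure of $w=D_hv$ (monotonicity along $h$ from convexity of $v$), and without some such input the rigidity statement is unavailable; note also that the classification machinery of Section~\ref{sec:classification}, which could supply rigidity, requires $K\in L^p$ with $p>\frac{n}{2s}$, whereas this lemma must hold for all $L^1$ kernels. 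The remaining steps of your proposal (dyadic reduction, choice of the first bad scale, scaling of the operator inequality with $r_k^{2s-\tau_k}\to0$, nondegeneracy of the limit via semiconvexity) are sound, but the argument as a whole does not close.
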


The proof is very similar to the one in \cite[Proposition 2.2]{CRS17}. However, due to the more general class of kernels satisfying only \eqref{eq:Glower} and \eqref{eq:Gupper}, some of  the arguments need to be adapted to our situation. In particular, the set $\mathcal{C}_{\mu}$ needs to be defined appropriately.

\begin{proof}[Proof of \autoref{lemma:C1-tau-help}]
We set 
\[\begin{split}
\theta(r) = \sup_{r'\ge r} (r')^{-\tau} \sup_{B_{r'}} |\nabla u|,
\end{split}\]
and observe that $\theta(r) \le 1$ for any $r \ge 1$ by assumption. Our goal is to show that $\theta(r) \le 2$ for $r \in (0,1)$. By contradiction, we assume that $\theta(r) > 2$ for some $r \in (0,1)$, which implies that there is $r'\in (r,1)$ such that
\[\begin{split}
(r')^{-\tau} \sup_{B_{r'}}|\nabla u| \ge (1-\eps) \theta(r) \ge (1-\eps) \theta(r') \ge \frac{3}{2},
\end{split}\]
where we will choose $\eps > 0$ small enough, later. Next, we set
\[\begin{split}
\bar{u}(x) = \frac{u(r'x)}{\theta(r')(r')^{1+\tau}},
\end{split}\]
which satisfies
\begin{align}
\nonumber
\bar{u} &\ge 0 ~~ \text{ in } B_{1/\delta},\\
\nonumber
D^2 \bar{u} &\ge - (r')^{1-\tau} \delta \mathrm{Id} \ge -\delta \mathrm{Id} ~~ \text{ in } B_{1/\delta},\\
\label{eq:contradiction}
L(-D_{\bar{h}} \bar{u}) &\le (r')^{2s-1-\tau} \delta \le \delta ~~\text{ in } \{ \bar{u} > 0\} \cap B_2.
\end{align}
Moreover, by definition of $\theta,r'$:
\[\begin{split}
1 - \eps \le \sup_{|\bar{h}| \le 1/4} \sup_{B_1} (-D_{\bar{h}} \bar{u}), \qquad \sup_{|\bar{h}| \le 1/4} \sup_{B_R} (-D_{\bar{h}} \bar{u}) \le (R + 1/4)^{\tau} ~~ \forall R \ge 1.
\end{split}\]
In particular,
\begin{align}
\label{eq:C1-tau-help1}
\Vert \bar{u} \Vert_{C^{0,1}(B_{7/4})} \le 2^{\tau}.
\end{align}
Next, we take $\eta \in C_c^2(B_{3/2})$ with $\eta \equiv 1$ in $B_1$ and $\eta \le 1$ in $B_{3/2}$. Then
\[\begin{split}
1 + 2\eps \le \sup_{|\bar{h}| \le 1/4} \sup_{B_1} (-D_{\bar{h}} \bar{u} + 3 \eps \eta).
\end{split}\]
We fix $h_0 \in B_{1/4}$ and $x_0 \in B_{3/2}$ such that
\[\begin{split}
t_0 := \max_{B_{3/2}} (-D_{h_0}\bar{u} + 3 \eps \eta ) &\ge 1 + \eps,\\
-D_{h_0}\bar{u}(x_0) + 3 \eps \eta(x_0) &= t_0,
\end{split}\]
and denote $v := -D_{h_0} \bar{u}$. Then it holds, if $\tau \in (0,1)$ is small enough
\[\begin{split}
v + 3 \eps \eta &\le v(x_0) + 3 \eps \eta(x_0) = t_0 ~~ \text{ in } \overline{B_{3/2}},\\
\sup_{B_4} v &\le (4 + 1/4)^{\tau} \le 1 + \eps \le t_0,
\end{split}\]
and therefore
\[\begin{split}
v+ 3 \eps \eta \le t_0 ~~ \text{ in } \overline{B_{2}}.
\end{split}\]
Moreover, $x_0 \in \{\bar{u} > 0\}$ since otherwise $\bar{u}(x_0) - \bar{u}(x_0 - h_0) \le 0$.\\
By the same argument as in \cite{CRS17}, using $D^2 \bar{u} \ge - \delta \mathrm{Id}$, $\bar{u} \ge 0$, and $\bar{u}(0) = 0$, we obtain for $x \in B_{\frac{1}{\delta} - 1}$:
\[\begin{split}
\bar{u}(x) - \bar{u}\left(x + t \frac{x}{|x|} \right) \le \frac{\delta |x| t}{2} ~~ \forall t \in (0,1).
\end{split}\]
Combining this with \eqref{eq:C1-tau-help1}, we obtain for any $t \in (0,1)$:
\[\begin{split}
v(x) \le \frac{\bar{u}\left(x + t \frac{x}{|x|} \right) - \bar{u}(x+h_0) }{|h_0|} + \frac{\delta |x| t}{2|h_0|} &\le 2^{\tau} \left|\frac{t}{|h_0|} \frac{x}{|x|} - \frac{h_0}{|h_0|} \right| + \frac{\delta |x| t}{2|h_0|}.
\end{split}\]

Let us estimate $v(x)$ even further by making the following observation: For any $\mu \in (0,1)$, there exists $\sigma \in (0,1)$ such that for any $x \in \mathcal{C}_{\mu} := \{ x \in \R^n : \frac{x}{|x|} \cdot \frac{h_0}{|h_0|} > \mu \}$, there exists $t \in (0,|h_0|)$ such that $\left|\frac{t}{|h_0|} \frac{x}{|x|} - \frac{h_0}{|h_0|} \right| \le (1-\sigma)$. In fact, $t$ can be chosen in such a way that $\frac{t}{|h_0|}\frac{x}{|x|}$ becomes the orthogonal projection of $\frac{h_0}{|h_0|}$ onto $\{ a \frac{x}{|x|} : a \in \R\}$. Therefore,
\[\begin{split}
v(x) \le 2^{\tau} \left|\frac{t}{|h_0|} \frac{x}{|x|} - \frac{h_0}{|h_0|} \right| + \frac{\delta |x| t}{2|h_0|} \le 2^{\tau} (1- \sigma) + \frac{\delta |x|}{2}.
\end{split}\]
Moreover, we have
\[\begin{split}
1 - 2 \eps \le v(x_0) \le 1 + \eps,
\end{split}\]
and therefore
\[\begin{split}
v(x_0) - v(y) \ge 
\begin{cases}
-c \eps |y-x_0|^2 ~~  \forall y \in B_2(x_0),\\
-(|y-x_0|+2)^{\tau} + 1 - 2 \eps ~~ \forall y \in \R^n \setminus B_1(x_0),\\
-\left(2^{\tau}(1-\sigma) + \frac{3\delta M}{2} \right) + 1 - 2 \eps ~~ \forall y \in \mathcal{C}_{\mu} \cap (B_{2M}(x_0) \setminus B_M(x_0)),
\end{cases}
\end{split}\]
where $M > 0$ will be chosen large enough, later in the proof.
Consequently,
\[\begin{split}
L v(x_0) &\ge -c \eps \int_{B_2} |y-x_0|^2 K(y-x_0) \d y\\
&\quad - \int_{\R^n \setminus B_1} \left[(|y-x_0|+2)^{\tau} - 1 + 2\eps \right] K(y-x_0) \d y \\
&\quad + \int_{C_{\mu} \cap (B_{2M}(x_0) \setminus B_M(x_0))} \left[ -\left(2^{\tau}(1-\sigma) + \frac{3\delta M}{2} \right) + 1 - 2 \eps \right] K(y - x_0) \d y\\
&= I_1 + I_2 + I_3.
\end{split}\]
Clearly, by \eqref{eq:Gupper}, it holds $I_1 \ge - c \eps \to 0$, as $\eps \to 0$ and $I_2 \to 0$, as $\tau \to 0$ and $\eps \to 0$.
Finally, let us explain how to estimate $I_3$. By \autoref{lemma:K-mass}, there exists $\nu \in (0,1)$, depending only on $n,s,\lambda,\Lambda$, such that
\begin{align}
\label{eq:K-est-C1tau}
\int_{ \{ \theta \in \mathbb{S}^{n-1} : \theta\cdot \frac{h_0}{|h_0|} > \nu \}} K(\theta) \d \theta \ge c
\end{align}
for some $c > 0$ depending only on $n,s,\lambda,\Lambda$. Moreover,  let us now choose $M > 0$ so large such that for any $x_0 \in B_{3/2}$ it holds
\begin{align}
\label{eq:K-est-C1tau2}
(x_0 + \mathcal{C}_{\nu}) \cap (B_{2M}(x_0) \setminus B_{M}(x_0)) \subset C_{\nu/2} \cap (B_{2M}(x_0) \setminus B_{M}(x_0)).
\end{align}
Note that $M$ can be chosen as a uniform constant, depending only on $n$.
Thus, choosing $\mu = \frac{\nu}{2}$ and $\tau,\delta,\eps$ so small that $\left[ -\left(2^{\tau}(1-\frac{\nu}{2}) + \frac{3\delta M}{2} \right) + 1 - 2 \eps \right] \ge \frac{\nu}{4}$ for any $x \in B_{2M}(x_0) \setminus B_M(x_0)$, we can estimate, using \eqref{eq:K-est-C1tau} and \eqref{eq:K-est-C1tau2}:
\[\begin{split}
I_3 \ge \frac{\nu}{4} \int_{C_{\nu/2} \cap (B_{2M}(x_0) \setminus B_M(x_0))} \hspace{-0.6cm}  K(y - x_0) \d y \ge \frac{\nu}{4}\int_{(x_0 + C_{\nu}) \cap (B_{2M}(x_0) \setminus B_M(x_0))} \hspace{-0.6cm} K(y-x_0) \d y \ge c\frac{\nu}{4}M^{-2s}.
\end{split}\]

We have shown that there exists $c > 0$ depending only on $n,s,\lambda,\Lambda$ such that $L v(x_0) \ge c$ once $\eps,\tau,\delta$ are chosen small enough.
This is a contradiction for $\delta > 0$ small enough, since by \eqref{eq:contradiction}
\[\begin{split}
c \le L v(x_0) \le \delta.
\end{split}\]
Note that we have $c \le L v(x_0)$ in the classical sense, since $x_0$ is a local maximum of the function $v + 3 \eps \eta$, and this function has a finite tail due to the growth control that we assume on $v$. Therefore, $v+3\eps \eta$ can be touched from above by a $C^2$-function, and we deduce that $c \le Lv(x_0)$ by application of \autoref{lemma:distr-ptw-evaluate}, using that $L \eta(x_0)$ is also defined in a pointwise sense.

\end{proof}

\begin{proof}[Proof of \autoref{prop:C1-tau} and \autoref{thm:C1-tau}]
The proof of \autoref{prop:C1-tau} is a standard consequence of \autoref{lemma:C1-tau-help}, which is applied after a rescaling and truncation argument. Moreover, we need to apply the interior regularity estimates in \cite[Theorem 2.4.3]{FeRo23} to $D_h u$. Note that \cite[Theorem 2.4.3]{FeRo23} remains true under \eqref{eq:Glower} and \eqref{eq:Gupper}. For more details on this  proof, we refer to \cite{CRS17}.
\end{proof}

\section{Classification of blow-ups}
\label{sec:classification}

The goal of this section is to prove \autoref{thm:classification} about the classification of blow-ups. 

\begin{remark}
\label{remark:weak-sense}
By interior estimates (see \cite[Theorem 2.4.3]{FeRo23}), in the situation of \autoref{thm:classification} we have $\nabla u_0 \in C^{2s-\eps}_{loc}(\{ u_0 > 0\}) \cap L^{\infty}_{s+\alpha}(\R^n)$. Therefore, $L(\nabla u_0) = 0$ also holds true in the weak sense in $\{ u_0 > 0\} \cap B_R$ for any $R > 0$ by \cite[Lemma 2.2.27]{FeRo23}.
\end{remark}

The proof in \cite{CRS17} and \cite{FeRo23} is heavily based on the boundary Harnack principle in $C^1$ (or more general) domains, since it yields the uniqueness of positive solutions to $L u = 0$ in cones. However, since the full Harnack inequality fails in our setting due to the generality of the kernels under consideration, we need to come up with another argument. It turns out that a boundary Harnack principle can still be established for positive and monotone solutions outside a convex cone (see \autoref{thm:bdryHarnack-cone}). In order to prove this result, we rely on the weak Harnack inequality (see \autoref{lemma:wHI-p}) and a local boundedness estimate (see \autoref{lemma:locbd-p}), which remain true in our setup. The main challenge is to establish a certain control on the growth of the solutions in order to get rid of the nonlocal contributions in the local boundedness estimate (see \autoref{prop:L1Linfty}).

\subsection{$L^q$ growth control}

A central ingredient in the proof of a boundary Harnack principle in convex cones is the following $L^{\infty}-L^q$-estimate which differs from \eqref{eq:locbd-p} in that it only contains local quantities on both sides:

\begin{proposition}
\label{prop:L1Linfty}
Assume \eqref{eq:Glower} and \eqref{eq:Kupper-p} for some $p > \frac{n}{2s}$, and $2s < n$. Let $K$ be homogeneous. Let $\Sigma \subset \R^n$ be a closed, convex cone with non-empty interior, and vertex at $0$. Let $u \in C(\R^n) \cap L^{\infty}_{s+\alpha}(\R^n)$ for some $\alpha \in (0,s)$, and $e \in \mathbb{S}^{n-1}$ with $e \not \in \Sigma$ be such that in the distributional sense:
\[
\begin{split}
L u &= 0 ~~ \text{ in } \R^n \setminus \Sigma,\\
u &= 0 ~~ \text{ in } \Sigma,\\
u &> 0~~ \text{ in } \R^n \setminus \Sigma,\\
\partial_e u &\ge 0 ~~ \text{ in } \R^n.
\end{split}
\]
Then, there exists a constant $c > 0$, depending only on $n,s,\lambda,\Lambda,e,\Sigma,q$, such that for any $r \in (0,1)$:
\begin{align}
\label{eq:scaled-L1Linfty}
\left(\dashint_{B_r} u^q \right)^{1/q} \le \Vert u \Vert_{L^{\infty}(B_r)} \le c \left(\dashint_{B_r} u^q \right)^{1/q}. 
\end{align}
\end{proposition}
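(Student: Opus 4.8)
The plan is to prove \eqref{eq:scaled-L1Linfty} by combining the local boundedness estimate \eqref{eq:locbd-p} (applied to $u$, which is nonnegative everywhere and $L$-subharmonic in $\R^n\setminus\Sigma\supset B_r$) with the weak Harnack inequality \eqref{eq:wHI-p}. The left-hand inequality is trivial (the $L^q$-average is dominated by the sup), so the content is the right-hand bound. After rescaling $u_r(x):=u(rx)$ — which preserves all hypotheses since $\Sigma$ and $L$ are scale-invariant — it suffices to show
\[
\sup_{B_1} u_r \le c\left(\dashint_{B_1} u_r^q\right)^{1/q}
\]
with $c$ independent of $r\in(0,1)$. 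By \eqref{eq:locbd-p} applied on $B_2\supset\supp$, the sup is controlled by $\dashint_{B_1}u_r$ plus a tail term $\sup_{\bar R\ge 1}\bar R^{-(2s-\eps)}(\dashint_{B_{\bar R}}u_r^q)^{1/q}$; the first term is itself $\le(\dashint_{B_1}u_r^q)^{1/q}$ by Jensen, so the heart of the matter is to absorb the tail term into the local $L^q$-average of $u_r$ on $B_1$.

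To control the tail I would first prove the interior weak Harnack estimate \emph{propagates}: applying \eqref{eq:wHI-p} on balls centered at points where $u_r$ is large and chaining, one gets that the $L^q$-average of $u_r$ on $B_{\bar R}$ is comparable — up to a constant that may grow polynomially in $\bar R$ — to the value of $u_r$ at any fixed interior point of $\R^n\setminus\Sigma$ inside that ball, say along the ray $\R_+ e$. Then the decisive input is the monotonicity $\partial_e u\ge0$ together with the growth control $u\in L^\infty_{s+\alpha}(\R^n)$, which is exactly what Lemma~\ref{lemma:L1-growth-control} (promised in the introduction) supplies: a barrier argument using \autoref{lemma:K-mass} (to locate enough mass of $K$ in directions transverse to $\Sigma$) gives that $u$ cannot grow faster than a fixed power at infinity that is \emph{strictly below} the threshold $2s-\eps$ — in fact, one expects growth like $|x|^{s}$ dictated by the homogeneity of the problem — and, crucially, that this growth rate on $B_{\bar R}$ is already controlled by $\sup_{B_1}u_r$, hence (running the argument backwards with the weak Harnack) by $(\dashint_{B_1}u_r^q)^{1/q}$. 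The restriction $p>\frac{n}{2s}$ enters precisely here: it guarantees $q=p'<\frac{n}{n-2s}$, so \eqref{eq:wHI-p} is applicable with that exponent $q$.

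Concretely, the steps in order: (1) reduce to $r=1$ by scaling; (2) apply \eqref{eq:locbd-p} to split $\sup_{B_{1/2}}u$ into a local average and a tail, and note $u\ge0$ globally so the hypotheses of \eqref{eq:locbd-p} hold; (3) use \eqref{eq:wHI-p} on a chain of balls from $B_1$ out to radius $\bar R$, staying in the open cone complement (here convexity of $\Sigma$ and $e\notin\Sigma$ ensure a ``fat'' region $\R^n\setminus\Sigma$ in which to chain), to bound $(\dashint_{B_{\bar R}}u^q)^{1/q}$ by a polynomial-in-$\bar R$ multiple of $u(\bar R e)$ or of $\sup_{B_1}u$; (4) invoke the growth-control lemma: the barrier comparison, built from a supersolution supported in a cone adapted to the geometry and using \autoref{lemma:K-mass} to get a uniform lower bound on $L$ applied to it, forces $\sup_{B_{\bar R}}u\le C\bar R^{\beta}\sup_{B_1}u$ for some $\beta<2s-\eps$ (choosing $\eps$ small), which kills the $\bar R^{-(2s-\eps)}$ weight; (5) finally bound $\sup_{B_1}u$ back in terms of $(\dashint_{B_1}u^q)^{1/q}$ by one more application of \eqref{eq:locbd-p} (now the tail is already tamed) and upgrade $B_{1/2}$ to $B_1$ by a routine covering/rescaling argument.

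The main obstacle I anticipate is step (4): establishing the right growth rate at infinity for $u$ without a Harnack inequality. One cannot simply iterate; instead the monotonicity $\partial_e u\ge0$ must be used to say that the sup of $u$ over $B_{\bar R}$ is essentially attained in the direction $e$, and then a carefully shaped barrier — not a standard radial bump, since $K$ may be blind to many directions — must be constructed so that $L(\text{barrier})$ has the correct sign in $\R^n\setminus\Sigma$; the shape of this barrier has to be chosen with the cone $\Sigma$ and the ``good cone'' of directions from \autoref{lemma:K-mass} in mind, exactly the adaptation flagged in the strategy paragraph of the introduction. Getting the exponent strictly below $2s-\eps$ (so that the tail term in \eqref{eq:locbd-p} genuinely vanishes relative to the bulk) is where the argument is most delicate, and it is also where the hypothesis $p>\frac n{2s}$ — equivalently $q<\frac{n}{n-2s}$ in \eqref{eq:wHI-p} — is sharp.
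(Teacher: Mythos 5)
Your overall architecture is the same as the paper's: the left inequality is trivial, and the right one is obtained by feeding an $L^q$ growth control at infinity (proved via the weak Harnack inequality and a barrier) into the tail term of the local boundedness estimate \eqref{eq:locbd-p}. However, two points in your execution are genuine gaps. First, your opening claim that \eqref{eq:locbd-p} applies to $u$ because it is ``$L$-subharmonic in $\R^n\setminus\Sigma\supset B_r$'' is false: $\Sigma$ has its vertex at $0$, so every ball $B_r$ centered at the origin meets $\Sigma$, and $u$ is \emph{not} a priori a weak subsolution across $\Sigma\cap B_r$ (it is only a distributional solution off $\Sigma$ and vanishes on $\Sigma$). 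The paper handles this by truncating: $u_\delta=\max\{u,\delta\}$ is a weak subsolution in a neighborhood of $\{u\ge\delta\}$ by \autoref{lemma:truncation-subsol} and is constant on $\{u<\delta\}$, hence a weak subsolution in all of $B_1$; one applies \eqref{eq:locbd-p} to $u_\delta$ and lets $\delta\to0$. Without some such device your step (2) does not get off the ground.

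Second, your step (3)--(4) mechanism for the growth control is not the one that works. Chaining the weak Harnack inequality outward from $B_1$ to $B_{\bar R}$ gives a comparison constant that degrades with each link of the chain; since the number of links needed to reach scale $\bar R$ grows, this yields a polynomial bound $\bar R^{\beta}$ with an exponent $\beta$ you cannot control, and certainly cannot force below (or equal to) $2s-\eps$. Likewise, constructing an \emph{upper} barrier (a supersolution dominating $u$ at infinity) is not what the paper does and is harder in this anisotropic setting. The actual argument (\autoref{lemma:dist-comp} and \autoref{lemma:L1-growth-control}) runs the other way: normalize $u_R(x)=u(Rx)/(\dashint_{B_R}u^q)^{1/q}$, prove the \emph{lower} bound $u_R\ge c\,d_\Sigma^{2s-\eps}$ on $B_1$ using the subsolution barrier $d_D^{2s-\eps}$ (with $D$ an exterior-ball regularization of $\Sigma$, so $L(d_D^{2s-\eps})\le-1\le Lu$) together with weak Harnack chains confined to $\{d_\Sigma\ge\rho\}$, and then evaluate at a fixed pivot point $x_0$ with $d_\Sigma(x_0)>1/100$ where the normalization forces $u(x_0)\le c$; rearranging gives $(\dashint_{B_R}u^q)^{1/q}\le cR^{2s-\eps}$. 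Note also that the growth rate need not be \emph{strictly} below $2s-\eps$ as you assert: the tail term in \eqref{eq:locbd-p} carries the prefactor $r^{2s-\eps}$, so growth exactly matching the weight $\bar R^{-(2s-\eps)}$ already yields a bounded supremum, which is precisely estimate \eqref{eq:growth-control-applied} in the paper.
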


To prove \autoref{prop:L1Linfty}, we establish growth control on the $L^q$-norms (see \autoref{lemma:L1-growth-control}). The main auxiliary result for this is the following consequence of the weak Harnack inequality and a barrier argument:

\begin{lemma}
\label{lemma:dist-comp}
Assume \eqref{eq:Glower} and \eqref{eq:Gupper}. Let $K$ be homogeneous and $2s < n$. Let $\Sigma \subset \R^n$ be a closed, convex cone with non-empty interior, and vertex at $0$. Let $u,e$ be as in \autoref{prop:L1Linfty} and assume that for some $0 < q < \frac{n}{n-2s}$
\[\begin{split}
\left(\dashint_{B_1} u^q \right)^{1/q} \ge 1.
\end{split}\]
Then there exist $\eps \in (0,s)$, depending only on $n,s,\lambda,\Lambda$, and $c>0$, depending only on $n,s,\lambda,\Lambda,e,\Sigma,q$, such that
\[\begin{split}
u \ge c d^{2s-\eps} ~~ \text{ in } (\R^n \setminus \Sigma) \cap B_1,
\end{split}\]
where $d = d_{\Sigma} = \dist(\cdot,\Sigma)$.
\end{lemma}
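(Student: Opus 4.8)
The plan is to combine the weak Harnack inequality of \autoref{lemma:wHI-p} (applied at a suitable scale near an interior point) with a barrier construction that propagates positivity all the way to the boundary of $\Sigma$ with the sharp power $d^{2s-\eps}$. First I would fix an interior direction: since $\Sigma$ is a closed convex cone with non-empty interior and vertex at $0$, pick $\xi \in \mathbb{S}^{n-1} \cap \mathrm{int}(-\Sigma^{\circ})$-type direction; more concretely, fix a point $x_* \in (\R^n \setminus \Sigma) \cap B_{1/2}$ with $\dist(x_*, \Sigma) \ge c_0 > 0$, where $c_0$ depends only on $\Sigma$. The hypothesis $\left(\dashint_{B_1} u^q\right)^{1/q} \ge 1$ together with the local boundedness estimate \eqref{eq:locbd-p} and interior regularity gives a lower bound $u(x_*) \ge c_1 > 0$ with $c_1$ depending on $n,s,\lambda,\Lambda,e,\Sigma,q$: indeed $u \ge 0$ globally, $Lu = 0$ on $\R^n \setminus \Sigma \supset B_{2c_0}(x_*)$, and the mean value on $B_1$ is bounded below, so by \autoref{lemma:wHI-p} applied on a ball around $x_*$ (after using that a nonnegative supersolution with controlled mean has a uniform pointwise lower bound on a slightly smaller ball) we get the claim. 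One subtlety: to run \autoref{lemma:wHI-p} we need the $L^q$-mean on $B_1$ to control the infimum on $B_1$, but $u$ vanishes on $\Sigma \cap B_1 \neq \emptyset$, so $\inf_{B_1} u = 0$; hence \autoref{lemma:wHI-p} must be applied on a ball $B_\rho(x_*) \subset \R^n\setminus\Sigma$ instead, and one must first transfer the lower bound on $\dashint_{B_1} u^q$ to a lower bound on $\dashint_{B_\rho(x_*)} u^q$ using the monotonicity $\partial_e u \ge 0$ (this is where $e \not\in \Sigma$ enters: moving in direction $e$ one leaves $\Sigma$, so $u$ along a segment from a generic point of $B_1$ into $B_\rho(x_*)$ is non-decreasing, up to reorganizing the geometry).

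Having secured $u(x_*) \ge c_1 > 0$, the next step is the barrier argument giving $u \ge c\, d^{2s-\eps}$ near $\partial\Sigma$. The standard approach is to find a subsolution of the form $\psi(x) = \big(\mathrm{dist}(x,\Sigma)\big)^{2s-\eps}$ (suitably cut off and localized) satisfying $L\psi \le 0$ in a neighborhood of $\partial\Sigma \cap B_1$ inside $\R^n \setminus \Sigma$, for $\eps > 0$ small depending only on $n,s,\lambda,\Lambda$. For convex cones, $d_\Sigma$ is a concave function on $\R^n\setminus\Sigma$ and $\R^n\setminus\Sigma$ contains a half-space direction, so the power $d^{2s-\eps}$ of the distance to a convex set is a classical barrier for stable operators: one checks $L(d^{2s-\eps}) \le 0$ by comparing with the one-dimensional computation $(-\partial_{tt})^s (t_+^{2s-\eps}) = c(s,\eps)\, t^{-\eps}$ with $c(s,\eps) \to c_0(s) > 0$ as $\eps \to 0$, combined with \eqref{eq:Glower} to see the operator genuinely ``sees'' the direction transversal to $\partial\Sigma$, and \eqref{eq:Gupper} to control the error terms from the curvature of $\partial\Sigma$ at scale $O(1)$ away from the vertex. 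The precise statement I would prove is: there is $\eps = \eps(n,s,\lambda,\Lambda) \in (0,s)$ and a barrier $\varphi \in C(\R^n)$ with $\varphi = 0$ on $\Sigma$, $0 \le \varphi \le C d^{2s-\eps}$, $\varphi \ge c\, d^{2s-\eps}$ in $(\R^n\setminus\Sigma)\cap B_{3/4}$, and $L\varphi \le 0$ in $(\R^n\setminus\Sigma)\cap B_{3/4}$ in the distributional/viscosity sense.

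Then I would run the comparison principle: on the open set $U := (\R^n\setminus\Sigma)\cap B_{3/4}$, both $u$ and $t\varphi$ (for a small $t>0$) are respectively a supersolution and a subsolution of $Lw = 0$; on $\Sigma$ we have $u = 0 = t\varphi$; on $B_{3/4}\setminus U$ \emph{minus} $\Sigma$, i.e. on $\R^n \setminus B_{3/4}$, we use $u \ge 0$ and need $t\varphi \le u$ there, which forces $\varphi$ to be cut off to vanish outside, say, $B_{7/8}$ — and the interior positivity $u(x_*) \ge c_1$ plus a Harnack chain / repeated weak Harnack along a path from $x_*$ to a fixed point of $\{d \ge 1/8\}\cap B_{3/4}$ pins down $u \ge c_2 > 0$ on the bulk $\{d \ge 1/8\} \cap B_{3/4}$, which lets us choose $t$ small enough that $t\varphi \le u$ on $\partial U \setminus \Sigma$. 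The maximum principle for the nonlocal operator (\autoref{lemma:distr-ptw-evaluate} handles the pointwise evaluation at an interior maximum of $t\varphi - u$ if one occurs, and the exterior/boundary data handle the rest) then yields $u \ge t\varphi \ge ct\, d^{2s-\eps}$ in $(\R^n\setminus\Sigma)\cap B_{1/2}$; a rescaling/covering argument upgrades $B_{1/2}$ to $B_1$ (using that the hypothesis is scale-invariant in the required way and $u \in L^\infty_{s+\alpha}$ controls tails). The main obstacle I anticipate is the barrier construction: verifying $L\varphi \le 0$ uniformly up to $\partial\Sigma$ for \emph{all} kernels satisfying only \eqref{eq:Glower}–\eqref{eq:Gupper} (no lower pointwise bound, possibly very anisotropic, kernel concentrated away from the normal direction of $\partial\Sigma$) — one must use \eqref{eq:Glower} precisely to guarantee that the second-difference quotient in the transversal direction is nondegenerate, and must absorb the possibly-large contribution of long jumps into $\R^n$ (where $\varphi$ may be set to $0$ by the cutoff, which only helps) and the contribution of jumps parallel to $\partial\Sigma$ (which, because $d$ is concave along $\Sigma$-parallel directions and we use a power less than $1$... actually less than $2s$, need a careful sign analysis). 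A secondary obstacle is making the interior lower bound $u \ge c_2$ on $\{d\ge 1/8\}\cap B_{3/4}$ quantitative without a full Harnack inequality — this is handled by iterating \autoref{lemma:wHI-p} along a chain of balls staying in $\R^n\setminus\Sigma$, each application costing a fixed constant, with the number of balls bounded in terms of $\Sigma$ and $e$ only.
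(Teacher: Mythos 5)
Your proposal follows essentially the same route as the paper: (i) a pigeonhole argument on $\dashint_{B_1}u^q$, the monotonicity $\partial_e u\ge 0$ to move the mass to a ball well inside $\R^n\setminus\Sigma$, and the weak Harnack inequality of \autoref{lemma:wHI-p} iterated along Harnack chains to get $u\ge c>0$ on the bulk region $\{d_\Sigma\ge\rho\}$; (ii) the barrier $d^{2s-\eps}$ plus the comparison principle near $\partial\Sigma$. The paper implements step (ii) by replacing $\Sigma$ with a regularized set $D$ satisfying $\Sigma\cup(\R^n\setminus B_3)\subset D\subset\Sigma\cup(\R^n\setminus B_2)$ and the exterior ball condition, so that $L(d_D^{2s-\eps})\le -1$ only needs to hold on the thin collar $\{d_D<\rho\}$ (citing \cite[Lemma B.1.4]{FeRo23}), the truncation is built into $D$, and the bulk is covered by the Harnack-chain bound. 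One slip to fix in your version: cutting the barrier off to vanish outside $B_{7/8}$ does \emph{not} ``only help'' --- decreasing $\varphi$ at far points \emph{increases} $L\varphi$ at nearby points, so the subsolution inequality for the truncated barrier survives only where the singular term $-c\,d^{-\eps}$ dominates the bounded truncation error; consequently you must restrict the comparison to $\{d_\Sigma<\rho\}$ and treat $\{d_\Sigma\ge\rho\}$ as boundary data via the bulk lower bound, rather than comparing on all of $(\R^n\setminus\Sigma)\cap B_{3/4}$ as written (the sign in your one-dimensional computation should likewise read $(-\partial_{tt})^s t_+^{2s-\eps}=-c(s,\eps)t^{-\eps}$ with $c>0$).
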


\begin{proof}
First, note that we can find a set $D \subset \R^n$ with $\Sigma \cup (\R^n \setminus B_3) \subset D \subset \Sigma \cup (\R^n \setminus B_2)$ and satisfying the exterior ball condition. Consequently, there are $\eps \in (0,s)$ and $\rho > 0$ such that the function $d_D = \dist(\cdot,D)$ satisfies
\[\begin{split}
L(d_D^{2s-\eps}) \le -1 \le 0 \le L u ~~ \text{ in } (\R^n \setminus D) \cap \{d_D < \rho\}.
\end{split}\]
For a reference, see \cite[Lemma B.1.4]{FeRo23}, which implies that the above estimate holds true for any domain $D$ satisfying the exterior ball condition and any kernel satisfying \eqref{eq:Gupper} and \eqref{eq:Glower}.
Moreover, it holds 
\[\begin{split}
u \ge 0 = d_D^{2s-\eps}~~ \text{ in } D.
\end{split}\]
Finally, by assumption, there exist $z_0 \in B_1$, $k \in \N$, $c > 0$ independent of $u$ such that $\left(\dashint_{B_{2^{-k}}(z_0)} u^q\right)^{1/q} \ge c 2^{-kn/q}$. Moreover, since $e \not\in \Sigma$, it is possible to find $t > 0$ such that for $z = z_0 + t e \in \R^n$ it holds $d_{\Sigma}(z) \ge 1$ and by $\partial_e u \ge 0$, it holds
\[\begin{split}
\left(\dashint_{B_{2^{-k}}(z)} u^q\right)^{1/q} \ge c 2^{-kn/q}.
\end{split}\] 
Therefore, by the weak Harnack inequality (see \autoref{lemma:wHI-p}), we have for any ball $B_{1/2}(x)$ with $B_{2^{-k}(z)} \subset B_{1/2}(x) \subset (\R^n \setminus \Sigma) \cap \{d_{\Sigma} \ge \rho\}$
\[\begin{split}
\inf_{B_{1/2}(x)}u \ge c 2^{-kn/q} \left(\dashint_{B_{2^{-k}}(z)} u^q \right)^{1/q} \ge c 2^{-2kn/q}.
\end{split}\]
Note that the weak Harnack inequality is applicable since $u$ is also a weak solution in $\{d_{\Sigma} \ge \rho \}$ by interior regularity estimates (see \cite[Theorem 2.4.3, Lemma 2.2.27]{FeRo23}).\\
Now we can use the above estimate and cover the whole domain $(B_3 \setminus \Sigma) \cap \{d_{\Sigma} \ge \rho\}$ by appropriate Harnack chains until we obtain that for some $c > 0$, depending on $n,s,\lambda,\Lambda,q,e,\Sigma,\rho$, but not on $u$, it holds
\[\begin{split}
u \ge c ~~ \text{ in } B_3 \cap \{d_{\Sigma} \ge \rho\}.
\end{split}\]
In particular, this implies
\[\begin{split}
u \ge c \ge c d_D^{2s-\eps} ~~ \text{ in } \{d_D \ge \rho\}.
\end{split}\]
Therefore, the comparison principle (see \cite[Corollary 2.3.8]{FeRo23}) yields
\[\begin{split}
u \ge c d_D^{2s-\eps} = c d_{\Sigma}^{2s-\eps} ~~ \text{ in } B_1,
\end{split}\]
where we used that $d_{D} = d_{\Sigma}$ in $B_1$ by construction of $D$. This concludes the proof.
\end{proof}

The following lemma contains the growth control on the $L^q$-norms.

\begin{lemma}
\label{lemma:L1-growth-control}
Assume \eqref{eq:Glower} and \eqref{eq:Gupper}. Let $K$ be homogeneous and $2s < n$. Let $\Sigma \subset \R^n$ be a closed, convex cone with non-empty interior, and vertex at $0$. Let $u,e$ be as in \autoref{prop:L1Linfty} and assume that for some $0 < q < \frac{n}{n-2s}$
\[\begin{split}
\left(\dashint_{B_1} u^q \right)^{1/q} \le 1.
\end{split}\]
Then, there exist $\eps \in (0,s)$, depending only on $n,s,\lambda,\Lambda$, and $c>0$, depending only on $n,s,\lambda,\Lambda,e,\Sigma,q$, such that
\[\begin{split}
\left(\dashint_{B_R} u^q\right)^{1/q} \le c R^{2s-\eps} ~~ \forall R \ge 1.
\end{split}\]
\end{lemma}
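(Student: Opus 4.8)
The plan is to turn the \emph{lower} bound of \autoref{lemma:dist-comp} into an \emph{upper} bound on the growth of the $L^q$-averages of $u$, exploiting that the cone $\Sigma$ is invariant under dilations. Throughout, fix the exponent $\eps\in(0,s)$ provided by \autoref{lemma:dist-comp} (it depends only on $n,s,\lambda,\Lambda$), write $d=\dist(\cdot,\Sigma)$, and set
\[
m(R):=\left(\dashint_{B_R}u^q\right)^{1/q},
\]
so that the claim is $m(R)\le cR^{2s-\eps}$ for all $R\ge1$. Since $e\notin\Sigma$ and $\Sigma$ is closed, I would first fix a round subcone $\Gamma=\{y\neq0:\ |y/|y|-e|<\delta_1\}$, with $\delta_1>0$ depending only on $e,\Sigma$, such that $\overline\Gamma\setminus\{0\}\subset\R^n\setminus\Sigma$ and $d(y)=|y|\,d(y/|y|)\ge\delta_1|y|$ for all $y\in\Gamma$; note $\Gamma$ has a fixed positive solid angle.

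The key step is a rescaling. Fix $R\ge1$ and consider $u_R(x):=m(R)^{-1}u(Rx)$. Because $K$ is homogeneous of degree $-n-2s$ and $\Sigma$ is a cone, $u_R$ satisfies in the distributional sense exactly the same equations and sign/monotonicity conditions as $u$ in \autoref{prop:L1Linfty} (i.e.\ $Lu_R=0$ in $\R^n\setminus\Sigma$, $u_R=0$ in $\Sigma$, $u_R>0$ in $\R^n\setminus\Sigma$, $\partial_eu_R\ge0$), and $u_R\in C(\R^n)\cap L^{\infty}_{s+\alpha}(\R^n)$. Moreover $\big(\dashint_{B_1}u_R^q\big)^{1/q}=m(R)^{-1}\big(\dashint_{B_R}u^q\big)^{1/q}=1\ge1$. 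Hence \autoref{lemma:dist-comp} applies to $u_R$ and gives $u_R\ge c_0\,d^{\,2s-\eps}$ in $(\R^n\setminus\Sigma)\cap B_1$, where $c_0>0$ depends only on $n,s,\lambda,\Lambda,e,\Sigma,q$ — crucially not on $R$ (nor on $u$). Undoing the scaling, and using $d(y/R)=d(y)/R$, this reads
\[
u(y)\ \ge\ c_0\,m(R)\,R^{-(2s-\eps)}\,d(y)^{2s-\eps}\qquad\text{for all }y\in(\R^n\setminus\Sigma)\cap B_R .
\]

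Finally I would test this pointwise lower bound against the fixed subcone $\Gamma$. Since $R\ge1$ we have $\Gamma\cap B_1\subset(\R^n\setminus\Sigma)\cap B_R$, and there $d(y)^{2s-\eps}\ge\delta_1^{2s-\eps}|y|^{2s-\eps}$; integrating over $\Gamma\cap B_1$ and using $m(1)\le1$,
\[
\big(c_0\,\delta_1^{2s-\eps}\,m(R)\,R^{-(2s-\eps)}\big)^q\int_{\Gamma\cap B_1}|y|^{q(2s-\eps)}\,\d y\ \le\ \int_{B_1}u^q\ =\ |B_1|\,m(1)^q\ \le\ |B_1| .
\]
The integral $\int_{\Gamma\cap B_1}|y|^{q(2s-\eps)}\,\d y$ is an explicit positive constant depending only on $n,q,\eps,\delta_1$, hence only on $n,s,\lambda,\Lambda,e,\Sigma,q$; rearranging the displayed inequality yields $m(R)\le c\,R^{2s-\eps}$ with $c$ of the asserted form, which is the conclusion.

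I do not expect a genuine obstacle here: once one observes that the distance-power lower bound of \autoref{lemma:dist-comp}, transported back to scale $1$, deposits in $B_1$ an amount of $L^q$-mass proportional to $\big(m(R)R^{-(2s-\eps)}\big)^q$, the normalization $m(1)\le1$ immediately caps the growth. The only two points deserving a little care are (i) checking that every hypothesis of \autoref{lemma:dist-comp} is preserved under the dilation $u\mapsto m(R)^{-1}u(R\,\cdot)$ with constants independent of $R$ — which is where homogeneity of $K$ and scale-invariance of $\Sigma$ enter — and (ii) choosing the subcone $\Gamma$ so that $d\gtrsim|\cdot|$ on it, which is possible precisely because $e\notin\Sigma$.
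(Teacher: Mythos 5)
Your proof is correct and follows essentially the same route as the paper: rescale $u$ by $m(R)$ so that \autoref{lemma:dist-comp} applies at scale $R$ with constants independent of $R$ (thanks to the homogeneity of $K$ and the scale-invariance of $\Sigma$), transport the resulting lower bound $u\ge c\,m(R)R^{-(2s-\eps)}d_\Sigma^{2s-\eps}$ back to $B_R$, and then use the normalization $\big(\dashint_{B_1}u^q\big)^{1/q}\le 1$ to bound $m(R)$. The only (harmless) difference is the final step: the paper evaluates the lower bound at a single point $x_0\in B_1$ with $d_\Sigma(x_0)>1/100$ and $u(x_0)\le c$, whereas you integrate the $q$-th power of the lower bound over a fixed subcone of $B_1$ away from $\Sigma$ — both are valid.
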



\begin{remark}
By scaling, $\left(\dashint_{B_r} u^q \right)^{1/q} \le 1$ implies 
\begin{align}
\label{eq:scaled-L1-growth-control}
\left(\dashint_{B_R} u^q \right)^{1/q} \le c (R/r)^{2s-\eps} ~~ \forall R \ge r.
\end{align}
\end{remark}

\begin{proof}[Proof of \autoref{lemma:L1-growth-control}]
Let $R > 1$ and define
\[\begin{split}
u_{R}(x) = \frac{u(R x)}{\left(\dashint_{B_{R}} u^q \right)^{1/q}}.
\end{split}\]
Clearly, $u_{R}$ satisfies the assumption of \autoref{lemma:dist-comp}, and in particular $\left(\dashint_{B_1} u_{R}^q \right)^{1/q} = 1$, which implies that
\[\begin{split}
u_{R} \ge c d_{\Sigma}^{2s-\eps} ~~ \text{ in } (\R^n \setminus \Sigma) \cap B_1.
\end{split}\]
Consequently, for every $x \in (\R^n \setminus \Sigma) \cap B_1$, using $d_{\Sigma}(R x) \le R d_{\Sigma}(x)$ for $R \ge 1$:
\[\begin{split}
u(R x) \ge c d_{\Sigma}^{2s-\eps}(x) \left( \dashint_{B_{R}} u^q \right)^{1/q} \ge c R^{\eps-2s} d_{\Sigma}^{2s-\eps}(R x) \left( \dashint_{B_{R}} u^q \right)^{1/q}.
\end{split}\]
By assumption there exists $x_0 \in (\R^n \setminus \Sigma) \cap B_1 \cap \{ d_{\Sigma} > 1/100\}$ such that $u(x_0) \le c$ for some constant $c = c(n,\Sigma) > 0$. Let us choose $x = x_0 / R \in B_1$, note that $d_{\Sigma}(R x) \ge 1/100$, and deduce
\[\begin{split}
\left(\dashint_{B_{R}} u^q \right)^{1/q} \le c R^{2s-\eps} u(x_0) \le c R^{2s-\eps}.
\end{split}\] 
This proves the desired result.
\end{proof}

As a consequence of the $L^q$-growth control, we can prove \autoref{prop:L1Linfty}.

\begin{proof}[Proof of \autoref{prop:L1Linfty}]

The first estimate is trivial. To prove the second estimate, we  rescale \autoref{lemma:L1-growth-control}, obtaining \eqref{eq:scaled-L1-growth-control}, which we apply to $u / \left(\dashint_{B_r} u^q \right)^{1/q}$. This yields for any $R \ge r$:
\[\begin{split}
\left(\dashint_{B_R} u^q \right)^{1/q} \le c \left(\dashint_{B_r} u^q \right)^{1/q} (R/r)^{2s-\eps},
\end{split}\]
and therefore
\begin{align}
\label{eq:growth-control-applied}
r^{2s-\eps} \sup_{R \ge r} \frac{\left(\dashint_{B_R} u^q \right)^{1/q}}{R^{2s-\eps}} \le c \left(\dashint_{B_r} u^q \right)^{1/q},
\end{align}
which is finite since $u \in L^{\infty}_{s+\alpha}(\R^n)$, by assumption.\\
Next, we claim that
\begin{align}
\label{eq:loc-bd-apply}
\Vert u \Vert_{L^{\infty}(B_{r})} \le c \left( \dashint_{B_{2r}} u^q \right)^{1/q} + c r^{2s-\eps }\sup_{R \ge r} \frac{\left(\dashint_{B_R} u^q \right)^{1/q}}{R^{2s-\eps}}.
\end{align}
This estimate will follow from the local boundedness estimate (see \autoref{lemma:locbd-p}), however note that we cannot apply it directly, since a priori it is not clear that $u$ is a weak subsolution in $B_r$. To circumvent this issue, let us take $\delta > 0$ and define $u_{\delta} = \max\{ u,\delta \}$. Clearly, by interior regularity estimates (see \cite[Theorem 2.4.3, Lemma 2.2.27]{FeRo23}), $u \in C^{2s}_{loc}(B_1 \cap (\R^n \setminus \Sigma))$ is a weak solution to $Lu = 0$ in $\Omega$ for any $\Omega \Subset B_1 \cap (\R^n \setminus \Sigma)$. Therefore, $u_{\delta}$ is a weak subsolution to $Lu_{\delta} \le 0$ in $\Omega$ due to \autoref{lemma:truncation-subsol}. By continuity of $u$ in $B_1 \cap (\R^n \setminus \Sigma)$, it follows in particular that $u_{\delta}$ is a weak subsolution in a neighborhood of $\{u \ge \delta\} \cap B_1$. Since $u_{\delta} \equiv \delta$ in $\{u < \delta\} \cap B_1$, $u_{\delta}$ is a weak subsolution to $Lu_{\delta} \le 0$ in $B_1$. Moreover, note that $u_{\delta} \in L^{\infty}_{2s-\eps}(\R^n)$ due to \eqref{eq:growth-control-applied}. Thus, by \eqref{eq:locbd-p} in \autoref{lemma:locbd-p}:
\[\begin{split}
\Vert u \Vert_{L^{\infty}(B_r)} \le \Vert u_{\delta} \Vert_{L^{\infty}(B_r)} &\le c \left( \dashint_{B_{2r}} u_{\delta}^q \right)^{1/q} + c r^{2s-\eps }\sup_{R \ge r} \frac{\left(\dashint_{B_R} u_{\delta}^q \right)^{1/q}}{R^{2s-\eps}}\\
&\le c \left( \dashint_{B_{2r}} u^q \right)^{1/q} + c r^{2s-\eps }\sup_{R \ge r} \frac{\left(\dashint_{B_R} u^q \right)^{1/q}}{R^{2s-\eps}} + c\delta,
\end{split}\]
which proves \eqref{eq:loc-bd-apply} upon taking the limit $\delta \to 0$.\\
Therefore, by a combination of \eqref{eq:loc-bd-apply} and \eqref{eq:growth-control-applied}:
\[\begin{split}
\Vert u \Vert_{L^{\infty}(B_{r})} \le c \left( \dashint_{B_{2r}} u^q \right)^{1/q} + c r^{2s-\eps }\sup_{R \ge r} \frac{\left(\dashint_{B_R} u^q \right)^{1/q}}{R^{2s-\eps}} \le c \left( \dashint_{B_r} u^q \right)^{1/q},
\end{split}\]
as desired.
\end{proof}

\subsection{A boundary Harnack principle in convex cones}

The main auxiliary result in the proof of the classification of blow-ups (see \autoref{thm:classification}) is the following boundary Harnack principle in convex cones:

\begin{theorem}
\label{thm:bdryHarnack-cone}
Assume \eqref{eq:Glower} and \eqref{eq:Kupper-p} for some $p > \frac{n}{2s}$, and $2s < n$. Let $K$ be homogeneous. Let $\Sigma \subset \R^n$ be a closed, convex cone with non-empty interior, and vertex at $0$. Let $u,v \in C(\R^n) \cap L^{\infty}_{s+\alpha}(\R^n)$ for some $\alpha \in (0,s)$ be distributional solutions to
\[\begin{split}
\begin{cases}
L u = 0 = L v ~~ \text{ in } \R^n \setminus \Sigma,\\
u = 0 = v ~~ \text{ in } \Sigma,\\
u,v > 0 ~~ \text{ in } \R^n \setminus \Sigma.
\end{cases}
\end{split}\]
Moreover, assume that there exist $e,e' \in \mathbb{S}^{n-1}$ with 
$e,e'\not \in \Sigma$ such that 
\[\begin{split}
\partial_e u, \partial_{e'} v \ge 0.
\end{split}\] 
and assume that $u$ and $v$ are normalized 
\[\begin{split}
\left(\dashint_{B_1} u^q \right)^{1/q} = 1 = \left(\dashint_{B_1} v^q \right)^{1/q}.
\end{split}\]
Then, there exists a constant $c > 0$, depending only on $n,s,\lambda,\Lambda,e,e',\Sigma,q$, such that
\[\begin{split}
c^{-1} u \le v \le c u~~ \text{ in } B_{1/2}.
\end{split}\]
\end{theorem}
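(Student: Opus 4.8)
The plan is as follows. By the symmetry of the hypotheses in $u$ and $v$ it suffices to prove one inequality, say $v\le Cu$ in $B_{1/2}$ (then swapping the roles of $u$ and $v$ gives the other). I would first dispose of the region away from the cone. On any interior ball $B_r(x_0)\subset\R^n\setminus\Sigma$ the functions $u,v$ are globally nonnegative weak supersolutions of $L(\cdot)=0$ there (in fact solutions), so \autoref{lemma:wHI-p} applies, while \autoref{lemma:locbd-p} controls $\sup_{B_r(x_0)}$, its nonlocal tail being finite since $u,v\in L^\infty_{s+\alpha}(\R^n)$ and in fact controlled by \autoref{lemma:L1-growth-control}. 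Chaining these estimates along the connected open set $\R^n\setminus\Sigma$ and using the normalization yields $u\asymp v\asymp 1$ on $\{d_\Sigma\ge\tfrac18\}\cap B_{3/4}$ together with $\|u\|_{L^\infty(B_{3/4})}+\|v\|_{L^\infty(B_{3/4})}\le C$. The whole content of the theorem is therefore the comparison inside the boundary layer $\{d_\Sigma<\tfrac18\}$, i.e. a geometric decay estimate for the oscillation of the quotient $v/u$ as one approaches $\partial\Sigma$.

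I would obtain this by a dyadic iteration in the spirit of De Silva--Savin. Fix a boundary point $z\in\partial\Sigma\cap B_{1/4}$ and set $\rho_k=2^{-k}$; since $\Sigma$ is a cone, each rescaled set $(\Sigma-z)/\rho_k$ is again a closed convex cone, so the structural hypotheses are scale invariant (the relevant monotonicity directions remaining admissible at every scale, which is where the dependence on $e,e',\Sigma$ enters). The goal is to produce numbers $a_k\le b_k$ with $a_ku\le v\le b_ku$ in $B_{\rho_k}(z)$ and $b_{k+1}-a_{k+1}\le(1-\theta)(b_k-a_k)$ for a universal $\theta\in(0,1)$; monotonicity in $k$ of $a_k$ (nondecreasing) and $b_k$ (nonincreasing) then forces convergence to a common finite limit and, with continuity up to $\partial\Sigma$, gives $c^{-1}u\le v\le cu$ in $B_{1/2}$. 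For the inductive step I rescale to unit scale and work with the nonnegative functions $b_k u-v$ and $v-a_k u$, both $L$-harmonic in $\R^n\setminus\Sigma$ and vanishing on $\Sigma$; a dichotomy shows that one of them is $\ge\tfrac12(b_k-a_k)u$ on a fixed interior patch of $\{d_\Sigma\ge c\}\cap B_1$. This interior lower bound is then propagated into the boundary layer: using the monotonicity of $u$ in $e$ (respectively of $v$ in $e'$) to carry positivity along a ray toward $\partial\Sigma$ exactly as in the proof of \autoref{lemma:dist-comp}, the weak Harnack inequality along a Harnack chain, and the distance subsolution $d_\Sigma^{2s-\eps}$ of \cite[Lemma B.1.4]{FeRo23} together with the matching lower bound $u\ge c_0 d_\Sigma^{2s-\eps}$ of \autoref{lemma:dist-comp}, one upgrades to $a_{k+1}u\le v\le b_{k+1}u$ with the claimed gain. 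Crucially $u$ and $v$ are monotone in possibly different directions, so this propagation is carried out for $u$ and for $v$ separately, never for their difference $b_k u-v$ (which need not be monotone).

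The main obstacle --- and the reason \autoref{lemma:L1-growth-control} was needed --- is the uniform control of the nonlocal tails across the infinitely many scales of the iteration. At scale $\rho_k$ the local boundedness estimate \autoref{lemma:locbd-p} contributes the term $\rho_k^{2s-\eps}\sup_{R\ge\rho_k}(\dashint_{B_R}|\cdot|^q)^{1/q}/R^{2s-\eps}$, and for the scheme to close this must be absorbed into a small multiple of the main quantity $b_k-a_k$. Since $u,v$ satisfy the convex-cone hypotheses at every scale with uniform constants, \autoref{lemma:L1-growth-control} in its scaled form gives $(\dashint_{B_R}|\cdot|^q)^{1/q}\lesssim(b_k-a_k)R^{2s-\eps}$ for $R\ge\rho_k$, so the tail contributes at most $C\rho_k^{2s-\eps}(b_k-a_k)$, which is lower order (note $2s-\eps>s$) and is absorbed once $\rho_k$ is small. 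Tracking these tails along the full chain of scales, together with the simultaneous bookkeeping of the two monotonicity directions, is the technically demanding part; the remainder is a routine dyadic iteration, and summing it over all scales and all boundary points $z\in\partial\Sigma\cap B_{1/4}$, combined with the interior comparison of the first paragraph, completes the proof.
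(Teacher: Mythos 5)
Your first paragraph reproduces the paper's preliminary steps: the bounds $u,v\le C$ in $B_{3/4}$ (via \autoref{prop:L1Linfty}) and $u,v\ge c$ on $\{d_\Sigma\ge\rho\}\cap B_1$ (via \autoref{lemma:dist-comp}) are exactly how the paper's proof begins. But from there the paper does \emph{not} iterate: following \cite{RoSe19}, it performs a single comparison, constructing $w=u\,\mathbbm{1}_{B_{3/4}}+C_1(\xi-1)+C_2\eta$ with $\eta$ supported in an annulus inside $\{d_\Sigma>\rho\}$, showing $Lw\le-1\le 0=Lv$ in $(\R^n\setminus\Sigma)\cap B_{2/3}$ (the key new point being $L\eta\le-c$ there, obtained from \autoref{lemma:K-mass} plus the fact that a convex cone lies in a half-space, so the possibly degenerate kernel still ``sees'' $\supp\eta$ from every relevant point), and concluding $u\le C_3v$ in $B_{1/2}$ from the comparison principle. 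Because the comparison function is $C_3v$ itself, no boundary decay rates ever need to be matched.

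Your dyadic oscillation-decay scheme, by contrast, has a genuine gap at the improvement step. To pass from ``$v-a_ku\ge\tfrac12(b_k-a_k)u$ on an interior patch'' to ``$v-a_ku\ge\theta(b_k-a_k)u$ in $B_{\rho_{k+1}}(z)$'' you must bound $v-a_ku$ from below near $\partial\Sigma$ by a multiple of $u$. The only lower barrier available in this generality is $d_\Sigma^{2s-\eps}$ (\autoref{lemma:dist-comp} and \cite[Lemma B.1.4]{FeRo23}), while the only upper bound on $u$ near the boundary is $d_\Sigma^{\,s-\theta}$ from \autoref{lemma:reg-up-to-Lipschitz} --- and that lemma requires a Lipschitz boundary with \emph{small} constant, which a general convex cone $\Sigma$ need not have. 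Since $2s-\eps>s>s-\theta$, the exponents never match: $c\,d^{2s-\eps}\ge \theta\, C d^{s-\theta}$ fails as $d\to0$, so the inductive gain $b_{k+1}-a_{k+1}\le(1-\theta)(b_k-a_k)$ cannot be closed this way. (A secondary problem is that the propagation must be run on the differences $b_ku-v$ and $v-a_ku$, which are nonnegative only in $B_{\rho_k}(z)$ and not globally, whereas \autoref{lemma:wHI-p} requires global nonnegativity; truncating introduces tail terms of size comparable to the quantity you are trying to estimate, and your remark that the propagation is done ``for $u$ and $v$ separately'' does not address this, since lower bounds on $u$ and $v$ individually give no information about their difference.) The statement you are proving only asserts two-sided comparability, not H\"older regularity of $v/u$, so the iteration is both stronger than needed and, with the tools available for these kernels, not justified; the one-step barrier comparison is the argument that actually works.
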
 


The main scheme of the proof follows the one in \cite{RoSe19}.

\begin{proof}[Proof of \autoref{thm:bdryHarnack-cone}]
By assumption, the following normalization condition is satisfied.
\begin{align}
\label{eq:normalization}
\left(\dashint_{B_1} u^q \right)^{1/q} = 1 = \left(\dashint_{B_1} v^q \right)^{1/q}.
\end{align}
First, we claim 
\begin{align}
\label{eq:bdry-help-1}
u \le C, ~~ v \le C ~~ \text{ in } B_{3/4}.
\end{align}
This is an immediate consequence of \autoref{prop:L1Linfty} (resp. of \eqref{eq:scaled-L1Linfty}) and \eqref{eq:normalization}, which yield that
\[\begin{split}
\sup_{B_{3/4}} u \le C \left(\dashint_{B_{3/4}} u^q\right)^{1/q} \le C.
\end{split}\]
Moreover, using \eqref{eq:normalization}, we can deduce from \autoref{lemma:dist-comp} 
\begin{align}
\label{eq:bdry-help-2}
u \ge c > 0, ~~ v \ge c > 0 ~~ \text{ in } (B_1 \setminus \Sigma) \cap \{ d_{\Sigma} \ge \rho \},
\end{align}
where $\rho > 0$ is a constant, which we are allowed to (and will) choose small enough in the sequel. Note that the constants $c,C > 0$ might depend on $n,s,\lambda,\Lambda,e,e',\Sigma,\rho$, but not on $u,v$.\\
Having at hand \eqref{eq:bdry-help-1} and \eqref{eq:bdry-help-2}, we can follow the strategy of the proof of the boundary Harnack principle in \cite{RoSe19} (resp. in \cite[Theorem 4.3.2]{FeRo23}):\\
We define $\xi \in C_c^{\infty}(B_{2/3})$ with $0 \le \xi \le 1$ and $\xi \equiv 1$ in $B_{1/2}$. Moreover, we fix $\mu = 1/100$ and take $\eta \in C^{\infty}((B_{3/4 + \mu} \setminus B_{3/4 - \mu}) \cap \{ d > \rho\})$ with $0 \le \eta \le 1$ and $\eta \equiv 1$ in $(B_{3/4 + \mu/2} \setminus B_{3/4 - \mu/2}) \cap \{ d > \rho\}$. With these definitions at hand, we introduce
\[\begin{split}
w := u \mathbbm{1}_{B_{3/4}} + C_1 (\xi - 1) + C_2 \eta.
\end{split}\]
Our goal is to prove that $w \le c v$ in $\R^n$ by using the comparison principle for distributional solutions.

First, if $C_1 > 0$ is large enough, we get by \eqref{eq:bdry-help-1}:
\[\begin{split}
w \le 0 \le v ~~ \text{ in } \R^n \setminus (B_{2/3} \cup \supp(\eta)),
\end{split}\]
and for $C_2 > 0$ large enough we get
\[\begin{split}
L w &= L u - L (u \mathbbm{1}_{\R^n \setminus B_{3/4}}) + C_1 L \xi + C_2 L \eta\\
&\le C + C_1 C - C_2 c \le -1 ~~ \text{ in } (\R^n \setminus \Sigma) \cap B_{2/3}.
\end{split}\]
Let us explain how to obtain the latter estimate. The following
\begin{align}
\label{eq:operator-tail}
L (u \mathbbm{1}_{\R^n \setminus B_{3/4}}) \le C \sup_{R \ge 1} \frac{\left( \dashint_{B_R} |u|^q \right)^{1/q}}{R^{2s-\eps}} \le C ~~ \text{ in } B_{2/3}
\end{align}
can be established using \autoref{lemma:tail-est-p}, the $L^q$-growth control (see \autoref{lemma:L1-growth-control}), and the normalization condition \eqref{eq:normalization}. Moreover, the estimate $L \xi \le C$ follows from \eqref{eq:Kupper-p}.
To prove that $L \eta \le -c$ in $(\R^n \setminus \Sigma) \cap B_{2/3}$, we argue as follows:
For any $z \in (\R^n \setminus \Sigma) \cap B_{2/3}$, it holds
\[\begin{split}
L \eta(z) = - \int_{\R^n} \eta(y) K(z-y) \d y \le - \int_{(B_{3/4 + \mu/2} \setminus B_{3/4 - \mu/2}) \cap \{ d_{\Sigma} > \rho\}} K(z-y) \d y,
\end{split}\]
since $\supp(\eta) \cap B_{2/3} = \emptyset$. We need to argue that the right hand side is bounded by a constant depending only on $n,s,\lambda,\Lambda$. In fact, by \autoref{lemma:K-mass}, there exists $\delta_0 > 0$ depending only on $\lambda,\Lambda$ such that for any $e \in \mathbb{S}^{n-1}$ 
\[\begin{split}
- \int_{ (B_{3/4 + \mu/2} \setminus B_{3/4 - \mu/2}) \cap \left\{\frac{z-y}{|z-y|} \cdot e > \delta_0 \right\} } K(z-y) \d y \le - c
\end{split}\]
for a uniform constant $c > 0$, depending only on $n,s,\lambda,\Lambda$, but not on $z$. Moreover, since $\Sigma$ is convex, it must be contained in a half-space. Consequently, there exists $e \in \mathbb{S}^{n-1}$ such that $\{x \cdot e > 0\} \subset \R^n \setminus \Sigma $, and therefore, for $\rho > 0$ small enough, depending on $\delta_0$, it holds 
\[\begin{split}
(B_{3/4 + \mu/2} \setminus B_{3/4 - \mu/2}) \cap  \left\{ y : \frac{z-y}{|z-y|} \cdot e \ge \delta_0 \right\} \subset (B_{3/4 + \mu/2} \setminus B_{3/4 - \mu/2}) \cap  \{y : d_{\Sigma}(y) > \rho\}.
\end{split}\]
Therefore, we have
\[\begin{split}
L \eta(z) &\le - \int_{(B_{3/4 + \mu/2} \setminus B_{3/4 - \mu/2}) \cap \{ d_{\Sigma} > \rho\}} K(z-y) \d y \\
&\le - \int_{ (B_{3/4 + \mu/2} \setminus B_{3/4 - \mu/2}) \cap \left\{\frac{z-y}{|z-y|} \cdot e > \delta_0 \right\} } K(z-y) \d y \le - c,
\end{split}\]
as desired.\\
All in all, we obtain
\[\begin{split}
L w  \le -1 \le 0 = L v ~~ \text{ in } (\R^n \setminus \Sigma) \cap B_{2/3}.
\end{split}\]
Moreover, by \eqref{eq:bdry-help-1} and \eqref{eq:bdry-help-2} we have
\[\begin{split}
w \le C_3 v ~~ \text{ in } \supp(\eta).
\end{split}\]
Next, by construction
\[\begin{split}
w \le 0 = v ~~ \text{ in } \Sigma.
\end{split}\]
Therefore, by the comparison principle (see \cite[Corollary 2.3.8]{FeRo23}),
\[\begin{split}
w \le C_3 v ~~ \text{ in } \R^n.
\end{split}\]
Now we are in the position to conclude the proof. In fact,
 since $w \equiv u$ in $B_{1/2}$, the aforementioned estimate implies
\[\begin{split}
u \le C_3 v ~~ \text{ in } B_{1/2}.
\end{split}\]
By changing the roles of $u,v$, we deduce the desired result.
\end{proof}

The main ingredient in the proof of \autoref{thm:classification} is the following result about the global comparability of positive solution in closed, convex cones, reminiscent of \cite[Theorem 3.1]{CRS17}:

\begin{corollary}
\label{thm:uniqueness}
Assume \eqref{eq:Glower} and \eqref{eq:Kupper-p} for some $p > \frac{n}{2s}$, and $2s < n$. Let $K$ be homogeneous. Let $\Sigma \subset \R^n$ be a closed, convex cone with non-empty interior, and vertex at $0$. Let $u,v \in C(\R^n) \cap L^{\infty}_{s+\alpha}(\R^n)$ for some $\alpha \in (0,s)$ be distributional solutions to
\[\begin{split}
\begin{cases}
L u = 0 = L v ~~ \text{ in } \R^n \setminus \Sigma,\\
u = 0 = v ~~ \text{ in } \Sigma,\\
u,v > 0 ~~ \text{ in } \R^n \setminus \Sigma.
\end{cases}
\end{split}\]
Moreover, assume that there exist $e,e' \in \mathbb{S}^{n-1}$ with 
$e,e'\not \in \Sigma$ such that 
\[\begin{split}
\partial_e u, \partial_{e'} v \ge 0.
\end{split}\]
Then, there exists $A > 0$ such that
\[\begin{split}
A^{-1} u \le v \le A u ~~ \text{ in } \R^n.
\end{split}\]
\end{corollary}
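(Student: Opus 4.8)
The plan is to deduce the corollary from the boundary Harnack principle \autoref{thm:bdryHarnack-cone} by exploiting the scale invariance of the problem. Since $\Sigma$ is a cone with vertex at the origin, $K$ is homogeneous of degree $-n-2s$, and $L[u(r\,\cdot)] = r^{-2s}(Lu)(r\,\cdot)$ for the \emph{same} operator $L$, for every $r>0$ the rescaled functions
\[
\tilde u_r(x) := \frac{u(rx)}{\left(\dashint_{B_r} u^q\right)^{1/q}}, \qquad
\tilde v_r(x) := \frac{v(rx)}{\left(\dashint_{B_r} v^q\right)^{1/q}}
\]
again solve $L\tilde u_r = 0 = L\tilde v_r$ in $\R^n\setminus\Sigma$, vanish in $\Sigma$, are positive in $\R^n\setminus\Sigma$, and are monotone in the \emph{same} directions $e,e'$ (because $\partial_e[u(r\,\cdot)] = r\,(\partial_e u)(r\,\cdot)\ge0$). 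They satisfy the normalization $\left(\dashint_{B_1}\tilde u_r^q\right)^{1/q} = 1 = \left(\dashint_{B_1}\tilde v_r^q\right)^{1/q}$ by construction, the denominators $\left(\dashint_{B_r}u^q\right)^{1/q}$ and $\left(\dashint_{B_r}v^q\right)^{1/q}$ are finite and positive for all $r>0$ (continuity and positivity of $u,v$ on the nonempty open set $\R^n\setminus\Sigma$), and they still lie in $C(\R^n)\cap L^{\infty}_{s+\alpha}(\R^n)$: for $rR\ge1$ one has $\|\tilde u_r\|_{L^{\infty}(B_R)} \le \left(\dashint_{B_r}u^q\right)^{-1/q}\|u\|_{L^{\infty}_{s+\alpha}(\R^n)}(rR)^{s+\alpha}<\infty$, and continuity handles $rR<1$. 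Since the rescaling changes neither $\lambda,\Lambda$ nor $\Sigma$ nor $e,e'$, the constant $c$ furnished by \autoref{thm:bdryHarnack-cone} is one and the same for every $r$. (This bookkeeping — that all hypotheses, and in particular the constant $c$, are scale-independent — is the only point requiring real care.)

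Applying \autoref{thm:bdryHarnack-cone} to $(\tilde u_r,\tilde v_r)$ and undoing the scaling gives, for every $r>0$,
\[
c^{-1}\rho(r)\,u \;\le\; v \;\le\; c\,\rho(r)\,u \quad\text{in } B_{r/2},\qquad
\rho(r) := \frac{\left(\dashint_{B_r}v^q\right)^{1/q}}{\left(\dashint_{B_r}u^q\right)^{1/q}}\in(0,\infty).
\]
Next I would show that $\rho$ is quasi-constant in $r$. For $0<r_1\le r_2$ the above bounds for $r_1$ and for $r_2$ both hold throughout $B_{r_1/2}\subset B_{r_2/2}$; evaluating at a point of $\R^n\setminus\Sigma$ (where $u>0$), the chain $c^{-1}\rho(r_1)u\le v\le c\,\rho(r_2)u$ gives $\rho(r_1)\le c^2\rho(r_2)$, and the chain $c^{-1}\rho(r_2)u\le v\le c\,\rho(r_1)u$ gives $\rho(r_2)\le c^2\rho(r_1)$. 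Hence $\rho(r_1)/\rho(r_2)\in[c^{-2},c^2]$ for all $r_1\le r_2$, so, writing $\rho_0:=\rho(1)\in(0,\infty)$, we obtain $c^{-2}\rho_0\le\rho(r)\le c^2\rho_0$ for every $r>0$.

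Inserting this two-sided bound on $\rho(r)$ into the comparison yields $c^{-3}\rho_0\,u\le v\le c^3\rho_0\,u$ in $B_{r/2}$ for every $r>0$, and letting $r\to\infty$ gives $c^{-3}\rho_0\,u\le v\le c^3\rho_0\,u$ in all of $\R^n$ (the inequality being trivial on $\Sigma$, where $u=v=0$). This is the assertion with $A:=c^3\max\{\rho_0,\rho_0^{-1}\}$. The whole argument is a soft consequence of \autoref{thm:bdryHarnack-cone} combined with the scale invariance of the cone problem; no new estimates are needed, and the only mild subtlety is the scale-uniformity of the constant discussed above.
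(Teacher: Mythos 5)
Your proposal is correct and follows essentially the same route as the paper: rescale and normalize $u,v$ at every radius, apply \autoref{thm:bdryHarnack-cone} with a scale-independent constant, control the ratio of the two normalizations by evaluating at a fixed point of $\R^n\setminus\Sigma$, and let the radius tend to infinity. The only cosmetic difference is that you bound $\rho(r_1)/\rho(r_2)$ by comparing two scales, whereas the paper fixes a single point $x\in B_1\setminus\Sigma$ and reads off the two-sided bound on the quotient directly; both are equivalent.
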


The proof of \autoref{thm:uniqueness} follows directly from \autoref{thm:bdryHarnack-cone} by a scaling argument:

\begin{proof}[Proof of \autoref{thm:uniqueness}]
We define for $R \ge 2$:
\[\begin{split}
u_R(x) := \frac{u(Rx)}{\left(\dashint_{B_R} u^q \right)^{1/q}}, \qquad v_R(x) := \frac{v(Rx)}{\left(\dashint_{B_R} v^q \right)^{1/q}},
\end{split}\]
and observe that by construction $\left(\dashint_{B_1} u_R^q \d x\right)^{1/q} = \left(\dashint_{B_1} v_R^q \d x \right)^{1/q} = 1$. Therefore, by application of \autoref{thm:bdryHarnack-cone}, we obtain
\[\begin{split}
c^{-1} u_R \le v_R \le c u_R ~~ \text{ in } B_{1/2}
\end{split}\]
for some $c > 0$, depending on $n,s,\lambda,\Lambda,e,e',\Sigma,q$, but not on $u,v,R$. In particular, we have
\[\begin{split}
c^{-1} \frac{u(x)}{\left(\dashint_{B_R} u^q \right)^{1/q}} \le \frac{v(x)}{\left(\dashint_{B_R} v^q \right)^{1/q}} \le c \frac{u(x)}{\left(\dashint_{B_R} u^q \right)^{1/q}} ~~ \forall x \in B_{R/2},
\end{split}\]
which, upon fixing an arbitrary point $x \in B_1 \setminus \Sigma$,  implies that the quotient $\left(\dashint_{B_R} u^q \right)^{1/q} / \left(\dashint_{B_R} v^q \right)^{1/q}$ is uniformly positive and uniformly bounded as $R \to \infty$. Thus, we obtain
\[\begin{split}
A^{-1} u(x) \le v(x) \le A u(x) ~~ \forall x \in B_{R/2}
\end{split}\]
for some $A > 0$, independent of $A$, which implies the desired result upon taking the limit $R \to \infty$.
\end{proof}

\subsection{Proof of \autoref{thm:classification}}

Before we prove the classification of blow-ups \autoref{thm:classification}, we consider the special case that $\{ u_0 = 0\}$ is a closed convex cone with non-empty interior:

\begin{lemma}
\label{lemma:cone-classification}
The statement of \autoref{thm:classification} holds true in case $\{u_0 = 0\} = \Sigma$ is a closed, convex cone with non-empty interior, and vertex at $0$.
\end{lemma}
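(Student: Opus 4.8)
\emph{Overview and setup.} The plan is to show that $\nabla u_0$ is everywhere parallel to a fixed direction $e_0$, so that $u_0(x)=\Psi(x\cdot e_0)$ for a one--variable convex function $\Psi\ge0$, and then to classify $\Psi$ using the homogeneity of $K$. We may assume $n\ge2$: for $n=1$ a convex cone with nonempty interior and vertex $0$ is a half--line, and the statement is exactly the one--dimensional classification used below. Since then $2s<2\le n$, the weak Harnack inequality \autoref{lemma:wHI-p}, the estimate \autoref{prop:L1Linfty}, and the comparability result \autoref{thm:uniqueness} are all available. By \autoref{prop:C1-tau} (applied after rescaling, using $u_0\ge0$, $D^2u_0\ge0$ and $L(D_hu_0)\ge0$ in $\{u_0>0\}$ together with the growth bound) and by \autoref{remark:weak-sense} we have $u_0\in C^1(\R^n)$ with $\nabla u_0=0$ on $\Sigma$, and for every $e\in\mathbb{S}^{n-1}$ the function $w_e:=\partial_e u_0\in C(\R^n)\cap L^{\infty}_{s+\alpha}(\R^n)$ is a distributional solution of $Lw_e=0$ in $\R^n\setminus\Sigma$, vanishes on $\Sigma$, and is nondecreasing along $e$ (because $\partial^2_{ee}u_0\ge0$). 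Moreover $\R^n\setminus\Sigma$ is connected, so by \autoref{lemma:wHI-p} any $w_e\ge0$ with $w_e\not\equiv0$ is strictly positive in $\R^n\setminus\Sigma$.

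\emph{Reducing $\nabla u_0$ to one direction.} First I would show that $e\in\Sigma$ forces $w_e\le0$ in $\R^n$: along a ray $\{x+te:t\ge0\}$ with $e\in\Sigma$ one has $\dist(x+te,\Sigma)\le|x|$, so the growth bound on $\nabla u_0$ yields $u_0(x+te)\le C_x(1+t)^{s+\alpha}$, which by convexity is incompatible with $w_e(x)>0$ since $s+\alpha<1$. Hence $-\Sigma\subset\mathcal D:=\{e:w_e\ge0\text{ in }\R^n\}$; as $\Sigma$ has nonempty interior, $\mathcal D$ has nonempty interior and spans $\R^n$, and not every $w_e$ with $e\in-\Sigma$ can vanish (else $u_0\equiv0$), so there is $e^\ast\in(-\Sigma)\setminus\Sigma$ with $w_{e^\ast}>0$ in $\R^n\setminus\Sigma$; this $w_{e^\ast}$ is admissible in \autoref{prop:L1Linfty} and \autoref{thm:uniqueness}. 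Now fix a basis $e_1,\dots,e_n\in\mathcal D^\circ$; then $w_{e_j}\ge0$, and $w_{e_j}\equiv0=0\cdot w_{e^\ast}$ automatically if $e_j\in\Sigma$. If instead $w_{e_j}\not\equiv0$ (so $e_j\notin\Sigma$), set $t_j:=\sup\{t\in\R: w_{e_j}-t\,w_{e^\ast}\ge0\text{ in }\R^n\}$, which is finite and positive by \autoref{thm:uniqueness}. Then $g:=w_{e_j}-t_j w_{e^\ast}=\partial_{e_j-t_je^\ast}u_0\ge0$ solves $Lg=0$ in $\R^n\setminus\Sigma$, vanishes on $\Sigma$, and is monotone along $\hat e:=e_j-t_je^\ast$; if $g\not\equiv0$ then $g>0$ in $\R^n\setminus\Sigma$, and either $\hat e/|\hat e|\in\Sigma$, whence the ray argument gives $g\le0$, a contradiction, or $\hat e/|\hat e|\notin\Sigma$, whence \autoref{thm:uniqueness} gives $g\ge c\,w_{e^\ast}$ and contradicts the maximality of $t_j$. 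So $g\equiv0$, i.e. $w_{e_j}=t_j w_{e^\ast}$ (this iteration is as in \cite{CRS17}). Expressing the coordinate vectors in the basis $\{e_j\}$ gives $\nabla u_0=w_{e^\ast}\,v_0$ for a fixed $v_0\in\R^n$, and since $w_{e^\ast}(v_0\cdot e^\ast)=w_{e^\ast}$ off $\Sigma$ we get $v_0\cdot e^\ast=1$, so $v_0\neq0$.

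\emph{The one--dimensional problem.} Set $e_0:=v_0/|v_0|$. Then $\nabla u_0\parallel e_0$, so $u_0(x)=\Psi(x\cdot e_0)$ for a convex $\Psi\in C^1(\R)$, $\Psi\ge0$. Since $\{u_0=0\}=\Sigma$ is a cone with vertex $0$, the slab $\{x\cdot e_0\in\{\Psi=0\}\}$ is scale invariant, which forces (after replacing $e_0$ by $-e_0$ if needed) $\{\Psi=0\}=(-\infty,0]$; in particular $\Sigma=\{x\cdot e_0\le0\}$ is a half-space. By homogeneity of $K$ one computes $L[\Psi(x\cdot e_0)](x)=c_K\,(-\Delta)^s_{\R}\Psi(x\cdot e_0)$, where the reduced kernel is $c_K|\tau|^{-1-2s}$ and $c_K\in(0,\infty)$, the positivity following from \eqref{eq:Glower} (otherwise $K$ would be supported on $\{y\cdot e_0=0\}$). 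Since $L(\nabla u_0)=0$ in $\{u_0>0\}$ and $\nabla u_0=\Psi'(x\cdot e_0)e_0$, the function $\Psi'\ge0$ is $(-\Delta)^s_{\R}$-harmonic in $(0,\infty)$, vanishes on $(-\infty,0]$, and satisfies $|\Psi'(t)|\lesssim|t|^{s+\alpha}$ with $s+\alpha<1+s$; the one--dimensional Liouville--type classification (cf. \cite{CRS17,FeRo23}) then gives $\Psi'(t)=\kappa(1+s)t_+^{s}$ for some $\kappa\ge0$, hence $\Psi(t)=\kappa t_+^{1+s}$ and $u_0=\kappa(x\cdot e_0)_+^{1+s}$.

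\emph{Main obstacle.} The delicate point is the second step: upgrading the two--sided comparison from \autoref{thm:uniqueness} to an exact proportionality of all directional derivatives, and verifying the supremum $t_j$ behaves as claimed. The borderline case $\hat e\in\Sigma$, which is ruled out via the sublinear growth of $u_0$ along rays at bounded distance from $\Sigma$, is precisely where the hypothesis $\alpha<1-s$ is used. The one--dimensional classification in the last step, with the non--sharp growth exponent $s+\alpha$, is the other place requiring care (or an appropriate citation).
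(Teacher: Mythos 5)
Your proof is correct and follows essentially the same strategy as the paper: both hinge on \autoref{thm:uniqueness} to run the sup-of-$t$ argument forcing all directional derivatives to be proportional, treat the borderline case where the difference direction falls into $\Sigma$ separately, and then reduce to the one-dimensional classification of $(-\Delta)^s$-harmonic functions on a half-line. The only (minor) differences are that you compare every $w_{e_j}$ to a fixed reference $w_{e^\ast}$ rather than pairwise, and you dispose of the case $\hat e\in\Sigma$ via the sublinear-growth-along-rays argument instead of invoking \cite[Lemma 4.3]{CRS17}; both variants are fine.
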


Before we start the proof, let us observe that since by assumption $K \in L^p(\mathbb{S}^{n-1})$, there exists $\Lambda > 0$ such that  $K$ satisfies \eqref{eq:Kupper-p}. Moreover, since $K \neq 0$, there is $0 < \lambda \le \Lambda$ such that $K$ satisfies \eqref{eq:Glower}.

\begin{proof}
Note that in case $n = 1$, the desired result follows directly from  \cite[Theorem 1.10.15]{FeRo23}. Thus, we can assume from now on that $n \ge 2$ and therefore $2s < n$.\\
Since $\Sigma \subset \R^n$ is a closed convex cone with non-empty interior and vertex at $0$, there exist $n$ linearly independent vectors $e_1, ... , e_n \in \mathbb{S}^{n-1}$ such that $- e_i \in \Sigma$, while $e_i \not\in\Sigma$. We claim that the linear space satisfies
\begin{align}
\label{eq:1d}
\mathrm{dim} \left\{ \sum_{i = 1}^n \lambda_i \partial_{e_i} u_0 : (\lambda_1, \dots , \lambda_n) \in \R^n \right\} \le 1.
\end{align}
Let us take any $1 \le i,j \le n$ with $i \neq j$ and assume that $\partial_{e_i} u_0 \not\equiv 0 \not\equiv \partial_{e_j} u_0$ (otherwise, \eqref{eq:1d} follows). Note that \eqref{eq:1d} holds true if we can show that there is $M > 0$ such that
\begin{align}
\label{eq:partial-uniqueness}
\partial_{e_i} u_0 = M \partial_{e_j} u_0 ~~ \text{ in } \R^n.
\end{align}
First, we observe that it holds in the distributional sense
\[\begin{split}
\begin{cases}
L \partial_{e_i} u_0 = L \partial_{e_j} u_0 = 0 ~~ \text{ in } \R^n \setminus \Sigma,\\
\partial_{e_i} u_0 = \partial_{e_j} u_0 = 0~~ \text{ in } \Sigma,\\
\partial_{e_i} u_0 , \partial_{e_j} u_0 > 0 ~~ \text{ in } \R^n \setminus \Sigma,\\
\partial_{e_i}(\partial_{e_i} u_0) , \partial_{e_j}(\partial_{e_j} u_0)  \ge 0 ~~ \text{ in } \R^n,
\end{cases}
\end{split}\]
where the third property holds true by the weak Harnack inequality (see \autoref{lemma:wHI-p}) since $\partial_{e_i} u_0, \partial_{e_j} u_0 \ge 0$ (by convexity and $-e_i,-e_j \in \Sigma$), and $\partial_{e_i} u_0 \not \equiv 0 \not\equiv \partial_{e_j} u_0$. Note that \autoref{lemma:wHI-p} is applicable due to \autoref{remark:weak-sense}. The fourth property follows from the convexity of $u_0$.
Moreover, note that $\partial_{e_i} u_0, \partial_{e_j} u_0$ are continuous due to \autoref{prop:C1-tau}. Therefore, \autoref{thm:uniqueness} is applicable to $\partial_{e_i} u_0, \partial_{e_j} u_0$, which implies that there is $A > 0$ such that
\[\begin{split}
A^{-1} \partial_{e_i} u_0 \le \partial_{e_j} u_0 \le A\partial_{e_i} u_0 ~~ \text{ in } \R^n.
\end{split}\]

To prove \eqref{eq:partial-uniqueness}, let us define
\[\begin{split}
\kappa^{\ast} := \sup \{\kappa > 0 : \partial_{e_i} u_0 \ge \kappa \partial_{e_j} u_0 ~~ \text{ in } \R^n\}, \qquad w := (\partial_{e_i} - \kappa^{\ast}\partial_{e_j}) u_0 \ge 0,
\end{split}\]
and assume by contradiction that $w \not\equiv 0$. Note that by definition of $w$, we have $w = \partial_{e_i - \kappa^{\ast}e_j} u_0$, and therefore 
\[\begin{split}
L w &= 0 ~~ \text{ in } \R^n \setminus \Sigma,\\
w &= 0~~ \text{ in } \Sigma,\\
w &> 0 ~~ \text{ in } \R^n \setminus \Sigma,\\
\partial_{e_i - \kappa^{\ast}e_j} w &\ge 0 ~~ \text{ in } \R^n.
\end{split}\]
The first and second property follow by the assumptions on the blow-up $u_0$. The third property follows from the weak Harnack inequality since $w \ge 0$ by definition and $w \not \equiv 0$. The fourth property follows by convexity of $u_0$. \\
We distinguish now between the following two cases:
\[\begin{split}
\text{either } e_i - \kappa^{\ast}e_j \not\in\Sigma, ~~ \text{ or } ~~ e_i - \kappa^{\ast}e_j \in \Sigma.
\end{split}\]
In case $e_i - \kappa^{\ast}e_j \not\in\Sigma$ we can apply \autoref{thm:uniqueness} to $w,\partial_{e_j}u_0$, which yields that there is $B > 0$ such that
\[\begin{split}
B^{-1} w \le \partial_{e_j} u_0 \le B w ~~ \text{ in } \R^n.
\end{split}\]
Therefore, 
\[\begin{split}
\partial_{e_i} u_0 \ge (\kappa^{\ast} + B^{-1})\partial_{e_j} u_0 ~~ \text{ in } \R^n,
\end{split}\]
which contradicts the definition of $\kappa^{\ast}$. Consequently, we must have $w \equiv 0$, which yields that $\partial_{e_i} u_0 = \kappa^{\ast} \partial_{e_j} u_0$ and proves \eqref{eq:partial-uniqueness}, and therefore also \eqref{eq:1d}.

On the other hand, in case $e_i - \kappa^{\ast}e_j \in\Sigma$, let us show that it must be
\begin{align}
\label{eq:half-space}
\{t (e_i - \kappa^{\ast}e_j) : t \in \R\} \subset \Sigma.
\end{align}
In fact, if we had $-(e_i - \kappa^{\ast}e_j) \not\in \Sigma$, then   $- t(e_i - \kappa^{\ast}e_j) \not\in \Sigma$ and thus $w(- t(e_i - \kappa^{\ast}e_j)) > 0$ for every $t > 0$, while $t(e_i - \kappa^{\ast}e_j) \in \Sigma$ and thus $w(t(e_i - \kappa^{\ast}e_j)) = 0$ for any $t > 0$. This contradicts $\partial_{e_i - \kappa^{\ast}e_j} w \ge 0$. Therefore, we have $\pm(e_i - \kappa^{\ast}e_j) \in \Sigma$, which yields \eqref{eq:half-space}.\\
By convexity of $u_0$, \eqref{eq:half-space} implies that $0 = \partial_{e_i - \kappa^{\ast}e_j} u_0 = \partial_{e_i} u_0 - \kappa^{\ast}\partial_{e_j} u_0$ (see \cite[Lemma 4.3]{CRS17}). However, this in turn yields \eqref{eq:partial-uniqueness}, and therefore also \eqref{eq:1d}, as desired.

We have established \eqref{eq:1d}, which proves that there exist $1 \le k \le n$ and $\kappa_i \ge 0$ such that for any $1 \le i \le n$ with $i \not= k$ it holds $\partial_{e_i - \kappa_i e_k} u_0 = 0$ in $\R^n$, which implies that $u_0$ is invariant in $n-1$ directions, i.e. that there exist $e \in \mathbb{S}^{n-1}$ and $\phi \in C^1(\R)$ such that $u_0(x) = \phi(x \cdot e)$. In particular, $\Sigma = \{ e \cdot x \le 0 \}$ since $0 \in \partial \Sigma$.\\
Next, due to \cite[Lemma B.1.5]{FeRo23}, we have $(-\Delta)^s_{\R} \phi'(x \cdot e) = 0$ in $\{x \cdot e > 0\}$ and it holds $\phi'(x \cdot e) = 0$ in $\{x \cdot e \le 0\}$. Moreover, $\phi'(t) \le C(1+ t^{s+\alpha})$. Thus, we can apply \cite[Theorem 1.10.15]{FeRo23} and obtain that $\phi'(x \cdot e) = a(x \cdot e)_+^s$ and therefore
\[\begin{split}
u_0(x) = \phi(x \cdot e) = \frac{a}{1+s} (x \cdot e)_+^{1+s}, ~~ \text{ for some } a \ge 0,
\end{split}\] 
which proves the desired result in case $\Sigma = \{u_0 = 0\}$ is a closed convex cone in $\R^n$ with non-empty interior and vertex at $0$.
\end{proof}

Now, we are in the position to give the 

\begin{proof}[Proof of \autoref{thm:classification}]
First, let us assume that $u_0 \not\equiv 0$, since otherwise there is nothing to prove. Let us observe that $\{u_0 = 0\} \subset \R^n$ is a convex set with $0 \in \partial \{u_0 = 0\}$.

By proceeding as in the proof of \cite[Proposition 4.4.3]{FeRo23}, we can find a sequence $R_m \nearrow \infty$ such that
\[\begin{split}
u_m(x) = \frac{u_0(R_m x)}{R_m \Vert \nabla u_0 \Vert_{L^{\infty}(B_{R_m})}}
\end{split}\]
satisfies in the distributional sense
\[\begin{split}
L(D_h u_m) &\ge 0 ~~ \text{ in } \{ u_m > 0\} = R_m^{-1} \{u_0 > 0\},\\
\Vert \nabla u_m \Vert_{L^{\infty}(B_R)} &\le 2R^{s+\alpha} ~~ \forall R \ge 1, \qquad \Vert \nabla u_m \Vert_{L^{\infty}(B_1)} = 1.
\end{split}\]
Moreover, by convexity of $u_m$, and the $C^{1,\tau}$-estimates from \autoref{prop:C1-tau}, they converge locally uniformly (up to a subsequence) to a function $u_{\infty} \in C^{0,1}(\R^n)$
satisfying
\[\begin{split}
u_{\infty} &\ge 0, ~~ \text{ and } D^2 u_{\infty} \ge 0 ~~ \text{ in } \R^n,\\
L(\nabla u_{\infty}) &= 0, ~~ \text{ and } L(D_h u_{\infty}) \ge 0 ~~ \text{ in } \R^n \setminus \Sigma ~~ \text{ in distributional sense},\\
\Vert \nabla u_{\infty} \Vert_{L^{\infty}(B_R)} &\le 2R^{s+\alpha} ~~ \forall R \ge 1, \qquad \Vert \nabla u_{\infty} \Vert_{L^{\infty}(B_1)} \ge 1,
\end{split}\]
where $\Sigma = \cap_{m} R_m^{-1} \{u_0 = 0\}$ denotes the limiting closed convex cone of the blow-down sequence. Note that 
we used the stability of distributional solutions (see \cite[Proposition 2.2.31]{FeRo23}).

In case $\Sigma$ is a closed, convex cone with empty interior, we claim that in the distributional sense
\begin{align}
\label{eq:global-distr-sol}
L(D_h u_{\infty}) = 0 ~~ \text{ in } ~~ \R^n, \qquad \forall h \in \R^n.
\end{align} 
From here, by the Liouville theorem with growth for distributional solutions (see \cite[Corollary 2.4.13]{FeRo23}), it turns out that $u_{\infty} = 0$, which is a contradiction.  Therefore, we can rule out that $\Sigma$ is a closed, convex cone with empty interior. \\
Let us prove that \eqref{eq:global-distr-sol} holds true:
First, note that there exists $e \in \mathbb{S}^{n-1}$ such that $\Sigma \subset \{x \in \R^n : e \cdot x = 0 \}$.
Moreover, observe that $D_h u_{\infty} \in C^{2s+1-\eps}_{loc}(\R^n \setminus \Sigma)$ by application of interior regularity estimates (see \cite[Theorem 2.4.2]{FeRo23}) to $\nabla u_{\infty}$, and using also the growth control on $\nabla u_{\infty}$. Therefore, $L(D_h u_{\infty}) \ge 0$ in a pointwise sense in $\R^n \setminus \Sigma$. 
Next, let us introduce $\phi(x) = \exp(-|e \cdot x|^{1-\theta})$ for some $\theta \in (\max\{0,1-2s \}, 1-s)$.
Then, according to \cite[Lemma B.1.1 and Lemma B.1.2]{FeRo23}, we have $L \phi \ge -C$ in $\R^n$, and moreover $L \phi = + \infty$ in $\Sigma$ in a pointwise way. The proof carries over to our more general class of operators since $\phi$ is one-dimensional (see the proof of \cite[Lemma 2.5.2]{FeRo23} and use \eqref{eq:Gupper}). Moreover, note that since $u_{\infty} \in C^{0,1}(\R^n)$, the function $\phi_{\eps} = D_h u_{\infty} + \eps \phi$ has a positive cusp on $\{x \cdot e = 0\}$ and hence
also satisfies $L \phi_{\eps} = + \infty$ in $\Sigma$ in a pointwise sense. Thus
\begin{align}
\label{eq:global-distr-sol-approx}
L \phi_{\eps} \ge -C \eps ~~ \text{ in } \R^n
\end{align}
pointwise, and by \autoref{lemma:ptw-distr}, \eqref{eq:global-distr-sol-approx} also holds true in the distributional sense. Moreover, note that since $\phi_{\eps} \to D_h u_{\infty}$ in $L^1_{loc}(\R^n)$, we have
\[\begin{split}
\int_{\R^n} (L \eta) \phi_{\eps} \d x \to \int_{\R^n} (L \eta) D_h u_{\infty} \d x
\end{split}\]
by the same arguments as in the proof of \cite[Proposition 2.2.31]{FeRo23}, and therefore we have in the distributional sense
\[\begin{split}
L(D_h u_{\infty}) \ge 0 ~~ \text{ in } ~~ \R^n, \qquad \forall h \in \R^n.
\end{split}\]

By taking $-h$ instead of $h$, and employing the same arguments as before, we obtain that $L(-D_{-h} u_{\infty}) \ge 0$ in $\R^n$, which yields \eqref{eq:global-distr-sol}, as desired. Thus, as explained before, the case where $\Sigma$ is a closed, convex cone with empty interior cannot happen.


Alternatively, the limiting set $\Sigma$ is a closed convex cone with non-empty interior and we have
\begin{align}
\label{eq:Sigma-limit}
\Sigma = \{ u_{\infty} = 0 \} = \cap_{m} R_m^{-1} \{u_0 = 0\}.
\end{align}
In that case, by \autoref{lemma:cone-classification}, we obtain
\[\begin{split}
u_{\infty}(x) = a_{\infty} (x \cdot e)_+^{1+s}, ~~ \text{ for some } a_{\infty} \ge 0.
\end{split}\]
In particular, $\Sigma = \{x \in \R^n : x \cdot e \le 0 \}$. Thus, due to \eqref{eq:Sigma-limit}, it must be $\{u_0 = 0\} = \{x \in \R^n : x \cdot e \le 0 \}$, so also $\{u_0 = 0\}$ is a closed convex cone with non-empty interior. An application of \autoref{lemma:cone-classification} to $u_0$ yields
\[\begin{split}
u_{0}(x) = a_{0} (x \cdot e)_+^{1+s}, ~~ \text{ for some } a_{0} \ge 0,
\end{split}\]
which concludes the proof.
\end{proof}

\subsection{Quantitative closeness to the blow-up}

As a direct consequence of the classification of blow-ups, we have the following quantitative estimate on closeness of a solution to the obstacle problem to the blow-up. This result is a counterpart of \cite[Proposition 4.4.14]{FeRo23} (see also \cite[Theorem 2.2]{FRS23}).

\begin{corollary}
\label{lemma:prop4.4.14}
Assume \eqref{eq:Glower} and \eqref{eq:Kupper-p} for some $p > \frac{n}{2s}$. Let $K$ be homogeneous, $\alpha \in (0,\min \{ s, 1-s \} )$ and let $\eps_0 > 0$ and $R_0 > 1$. Then, there is $\eta > 0$, depending only on $n,s,\lambda,\Lambda,\alpha,\eps_0,R_0$, such that the following holds true:\\
Let $u \in C^{0,1}(\R^n)$ such that
\begin{itemize}
\item[(i)] $\min \{ Lu - f , u \} = 0$ in $\R^n$ in the distributional sense, where $|\nabla f| \le \eta$,
\item[(ii)] $u \ge 0$ and $D^2 u \ge - \eta \mathrm{Id}$ in $\R^n$ with $0 \in \partial\{ u > 0 \}$,
\item[(iii)] $\Vert \nabla u \Vert_{L^{\infty}(B_R)} \le R^{s+\alpha}$ for any $R \ge 1$.
\end{itemize}
Then, it holds
\begin{align}
\label{eq:close-to-blowup}
\Vert u - \kappa (x \cdot e)_+^{1+s} \Vert_{C^{0,1}(B_{R_0})} \le \eps_0
\end{align}
for some $e \in \mathbb{S}^{n-1}$ and $\kappa \ge 0$.
\end{corollary}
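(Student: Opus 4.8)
The plan is to argue by contradiction and compactness, reducing the statement to the classification of blow-ups in \autoref{thm:classification}. Suppose the conclusion fails for some $\eps_0>0$ and $R_0>1$; then there exist $\eta_k\downarrow 0$, functions $u_k\in C^{0,1}(\R^n)$, and $f_k$ satisfying (i)--(iii) with $\eta=\eta_k$, but with $\Vert u_k-\kappa(x\cdot e)_+^{1+s}\Vert_{C^{0,1}(B_{R_0})}>\eps_0$ for every $\kappa\ge 0$ and $e\in\mathbb{S}^{n-1}$.

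First I would establish uniform interior estimates. Each $u_k$ satisfies the hypotheses of \autoref{prop:C1-tau}: on $\{u_k>0\}$ one has $L(\nabla u_k)=\nabla f_k$, so $|L(\nabla u_k)|\le\eta_k$, and $L(D_hu_k)\ge D_hf_k\ge-\eta_k$ for all $h$, while (ii) gives $u_k\ge0$ together with the semiconvexity bound and (iii) gives $|\nabla u_k|\le 1+|x|^{s+\alpha}$. Applying \autoref{prop:C1-tau} after the dilations $x\mapsto u_k(\rho x)$ (under which (i)--(iii) are preserved, up to scaling of $f$ and of the bounds by powers of $\rho$) shows that $\{u_k\}$ is bounded in $C^{1,\tau}_{\loc}(\R^n)$ for the exponent $\tau$ of \autoref{prop:C1-tau}. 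Together with the growth control (iii), the Arzel\`a--Ascoli theorem yields a subsequence with $u_k\to u_\infty$ in $C^1_{\loc}(\R^n)$ and $\nabla u_k\to\nabla u_\infty$ in $C^{\tau'}_{\loc}(\R^n)$ for every $\tau'<\tau$. Passing to the limit, $u_\infty\ge 0$ and $D^2u_\infty\ge 0$ in $\R^n$ (semiconvexity is stable under $C^0$-convergence and $\eta_k\to0$), $u_\infty(0)=0$ (since $0\in\partial\{u_k>0\}$ forces $u_k(0)=0$), and $\Vert\nabla u_\infty\Vert_{L^\infty(B_R)}\le R^{s+\alpha}$ for all $R\ge1$. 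Moreover, for any $\Omega\Subset\{u_\infty>0\}$ one has $\Omega\subset\{u_k>0\}$ for $k$ large, so $L(\nabla u_k)=\nabla f_k$ and $L(D_hu_k)\ge-\eta_k$ in $\Omega$ in the distributional sense; since $\nabla u_k$ has growth $s+\alpha<2s$ (uniformly in $k$) and $\eta_k\to0$, the stability of distributional (super)solutions under locally uniform convergence (cf. \cite[Proposition 2.2.31]{FeRo23}, \autoref{lemma:ptw-distr}) gives $L(\nabla u_\infty)=0$ and $L(D_hu_\infty)\ge0$ in $\{u_\infty>0\}$ for all $h\in\R^n$.

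The remaining --- and delicate --- point is that $0$ persists as a free boundary point of $u_\infty$ unless $u_\infty\equiv0$: one must rule out that $u_\infty$ is a nontrivial profile whose contact set contains $0$ in its interior, in which case $u_\infty$ would be a \emph{translated} half-space solution $\kappa\big((x-x_0)\cdot e\big)_+^{1+s}$ and the contradiction would not close. Here I would invoke non-degeneracy, namely the bound $\sup_{B_r(x_0)}u\ge c\,r^{1+s}$ at free boundary points $x_0$ of solutions to the obstacle problem, obtained from the obstacle-problem structure (the least-supersolution characterization, \autoref{lemma:least-super}) together with a barrier comparison --- in the present non-Harnack setting relying on the weak Harnack inequality \autoref{lemma:wHI-p}. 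Applied to $u_k$ at $0$ with a constant independent of $k$, this yields $\sup_{B_r}u_k\ge c_0 r^{1+s}$, hence $\sup_{B_r}u_\infty\ge c_0 r^{1+s}>0$ for all $r>0$, so $0\in\partial\{u_\infty>0\}$ whenever $u_\infty\not\equiv0$. If instead $u_\infty\equiv0$, then $\Vert u_k\Vert_{C^{0,1}(B_{R_0})}=\Vert u_k-u_\infty\Vert_{C^{0,1}(B_{R_0})}\to0$, already contradicting the standing assumption with $\kappa=0$.

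With all hypotheses of \autoref{thm:classification} verified (the case $u_\infty\equiv0$ having been excluded), we obtain $u_\infty=\kappa(x\cdot e)_+^{1+s}$ for some $\kappa\ge0$ and $e\in\mathbb{S}^{n-1}$, and then $\Vert u_k-\kappa(x\cdot e)_+^{1+s}\Vert_{C^{0,1}(B_{R_0})}=\Vert u_k-u_\infty\Vert_{C^{0,1}(B_{R_0})}\to0$ by $C^1_{\loc}$-convergence, contradicting the standing assumption for $k$ large. The main obstacle I anticipate is exactly the non-degeneracy/free-boundary-persistence step: since hypothesis (i) controls only $\nabla f$ and not the size or sign of $f$, this estimate is not immediate and must be extracted from the structure of the obstacle problem; the compactness and stability steps are routine.
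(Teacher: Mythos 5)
Your overall strategy --- contradiction, compactness via \autoref{prop:C1-tau}, stability of distributional solutions, and then \autoref{thm:classification} --- is exactly the route the paper takes; its proof is essentially this four-line compactness argument and is silent on the point you single out as delicate, namely why $0$ survives as a free boundary point of the limit $u_\infty$. So you have correctly located the only nontrivial issue. The problem is that the mechanism you propose to resolve it does not work.

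The uniform non-degeneracy estimate $\sup_{B_r}u_k\ge c_0 r^{1+s}$ at free boundary points is \emph{false} for the nonlocal obstacle problem: the paper's own introduction (item (2b)) records that degenerate free boundary points exist, at which $\sup_{B_r}(u-\phi)\lesssim r^{1+s+\alpha}$ with $\alpha>0$. Non-degeneracy characterizes \emph{regular} points and is a conclusion of the whole analysis, not an a priori bound extractable from the least-supersolution characterization plus a barrier; note also that hypothesis (i) controls only $\nabla f$, not the sign or size of $f$, which is what such barrier arguments need. Your estimate would in fact prove too much: it would force $u_\infty\not\equiv0$, hence $\kappa>0$, whereas the corollary must (and does) allow $\kappa=0$ precisely to cover degenerate points. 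The scenario that genuinely has to be excluded is $u_k\to u_\infty=\kappa\big(x\cdot e-t\big)_+^{1+s}$ with $\kappa>0$ and $t>0$, and ruling it out requires the equation rather than non-degeneracy. One way: take $x_k\in\{u_k>0\}$ with $x_k\to0$ and split $Lu_k(x_k)$ into a near-field part, bounded by $C\eta_k$ using $D^2u_k\ge-\eta_k\,\mathrm{Id}$, and a far-field part, which is $\le Cu_k(x_k)-c\kappa$ because $u_k\ge\kappa/2$ on a fixed annular sector in direction $e$ carrying kernel mass $\ge c>0$ by \autoref{lemma:K-mass}; since $u_k(x_k)\to u_\infty(0)=0$ this gives $f_k(x_k)=Lu_k(x_k)\le-c\kappa/2$ for large $k$, hence $f_k\le-c\kappa/4$ on compact sets because $|\nabla f_k|\le\eta_k$. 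On the other hand, at any fixed $z$ with $z\cdot e>t$ one has $u_k(z)>0$ for large $k$ and $f_k(z)=Lu_k(z)\to Lu_\infty(z)=0$, since the one-dimensional profile is $L$-harmonic in its positivity set; this is a contradiction. With this (or an equivalent argument) in place of the non-degeneracy claim, the rest of your proof is correct and matches the paper's.
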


\begin{proof}
We assume by contradiction that there is no $\eta > 0$ for which the result holds. Then, there are $\eta_k \to 0$, $L_k$ satisfying \eqref{eq:Glower}, \eqref{eq:Kupper-p}, $f_k, u_k$ satisfying (i),(ii),(iii) with $\eta = \eta_k$ but violating \eqref{eq:close-to-blowup} for any $e \in \mathbb{S}^{n-1}$ and $\kappa \ge 0$. By \autoref{prop:C1-tau}, $u_k$ locally converges in $C^{1,\tau}$ to a limiting $u_0 \in C^{1,\tau}$ up to a subsequence. By the stability of distributional solutions (see \cite[Proposition 2.2.31]{FeRo23}), there is $L$ satisfying \eqref{eq:Glower}, \eqref{eq:Kupper-p} such that $u_0$ satisfies (i),(ii),(iii) with $\eta = 0$. Thus, by \autoref{thm:classification}, it must be $u_0(x) = \kappa(x \cdot e)_+^{1+s}$ for some $\kappa,e$, a contradiction.
\end{proof}


\section{Regularity of the free boundary}
\label{sec:regularity}

The main result of this article are the regularity of the free boundary near regular points and the optimal $C^{1,s}$-regularity of solutions. They are summarized in \autoref{thm:opt-reg}, which we will prove in this section.\\
As we explained before, the main tool in the proof of \autoref{thm:opt-reg}, once the classification of blow-ups is established,  is the quantitative estimate \autoref{prop:4.4.15}, which we will prove first.

\subsection{Lipschitz regularity}

As a first step towards proving \autoref{prop:4.4.15}, we establish Lipschitz regularity of the free boundary near regular points. In fact, we will prove slightly more, namely that the free boundary is  Lipschitz with an arbitrarily small constant. Let us give the following definition:

\begin{definition}[Lipschitz domain]
Let $\rho_0 > 0$. We say that a domain $\Omega \subset \R^n$ is a Lipschitz domain in $B_{\rho_0}$ with constant less than $\delta$, if there are $g : \R^{n-1} \to \R$, $e \in \mathbb{S}^{n-1}$ such that 
\[\begin{split}
\Omega \cap B_{\rho_0} = \{ \bar{x}_n > g(\bar{x}_1,\bar{x}_2,\dots,\bar{x}_{n-1} )  \} \cap B_{\rho_0},
\end{split}\]
where $\bar{x} = R x$ for some rotation $R$ with $R e = e_n$, and 
\[\begin{split}
\Vert g \Vert_{C^{0,1}(B_{\rho_0})} \le \delta.
\end{split}\]
\end{definition}

The following is the main result of this subsection. It states that the free boundary is a Lipschitz domain with a small constant once the solution is close enough to the blow-up. Such result was already known for operators whose kernel is comparable to the fractional Laplacian (see \cite[Lemma 4.4.13 and Proposition 4.4.15]{FeRo23}). In our case, the proof needs to be modified in order to deal with kernels that are possibly degenerate in some directions.

\begin{lemma}
\label{lemma:4.4.13}
Assume \eqref{eq:Glower} and \eqref{eq:Gupper}. Let $K$ be homogeneous and $\alpha \in (0,\min \{ s , 1-s\})$. For any $\kappa_0 > 0$, $\rho_0 > 0$, and $\delta < \rho_0$ there are $\eps > 0$, $R_0 > 1$ depending only on $\lambda,\Lambda,n,s,\delta,\rho_0,\kappa_0$ such that the following holds true:\\
Let $u \in C^{0,1}(\R^n)$ be such that
\begin{itemize}
\item[(i)] $\min \{ Lu - f , u \} = 0$ in $B_{R_0}$, in the distributional sense, where $|\nabla f| \le \eps$,
\item[(ii)] $0 \in \partial\{ u > 0 \}$, and $D^2 u \ge - \eps \mathrm{Id}$ in $B_{R_0}$,
\item[(iii)] $\Vert \nabla u \Vert_{L^{\infty}(B_R)} \le R^{s+\alpha}$ for any $R \ge R_0$,
\item[(iv)] $\Vert u - \kappa (x \cdot e)_+^{1+s} \Vert_{C^{0,1}(B_{R_0})} \le \eps$ for some $\kappa \ge \kappa_0$ and $e \in \mathbb{S}^{n-1}$.
\end{itemize}
Then, for any $e' \in \mathbb{S}^{n-1}$ with $e \cdot e' \ge \delta$, it holds
\begin{align}
\label{eq:monotonicity-cone}
\partial_{e'} u \ge 0 ~~ \text{ in } B_{\rho_0},
\end{align}
and $\{ u > 0\} \cap B_{\rho_0}$ is Lipschitz with constant less than $c \delta$ for some $c = c(n) > 0$.\\
Moreover
\begin{align}
\label{eq:e-der-lower-bound}
\partial_e u \ge c (\delta \rho_0)^{s} ~~ \text{ in } \{ x \in \R^n : x \cdot e \ge \delta \rho_0 \} \cap B_{R_0} \subset \{ u > 0\}
\end{align}
for some $c > 0$, depending only on $\kappa_0,\eps,\delta$.
\end{lemma}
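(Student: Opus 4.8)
The plan is to split the argument into three parts: a soft part, where we read off the bulk estimates and \eqref{eq:e-der-lower-bound} directly from hypothesis (iv); a hard part, where we propagate the monotonicity from the bulk into the thin slab around $\{x\cdot e=0\}$; and a geometric part, where we deduce the flat Lipschitz graph from the cone of monotonicity directions.

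First I would exploit the closeness (iv) directly. Writing $u_0(x)=\kappa(x\cdot e)_+^{1+s}$, so that $\nabla u_0=\kappa(1+s)(x\cdot e)_+^{s}\,e$, and decomposing any $e'$ with $e\cdot e'\ge\delta$ as $e'=(e\cdot e')e+w$ with $w\perp e$, one has $|\partial_w u|=|\partial_w(u-u_0)|\le\eps$ because $u_0$ is invariant in the directions orthogonal to $e$; hence $\partial_{e'}u\ge\delta\kappa(1+s)(x\cdot e)_+^{s}-2\eps$ and $\partial_e u\ge\kappa(1+s)(x\cdot e)_+^{s}-\eps$ in $B_{R_0}$. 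Choosing the threshold $\sigma:=\bigl(3\eps/(\delta\kappa_0(1+s))\bigr)^{1/s}$, which tends to $0$ as $\eps\to0$, this makes both $\partial_{e'}u$ and $\partial_e u$ strictly positive on $\{x\cdot e\ge\sigma\}\cap B_{R_0}$; since $u\ge0$ is of class $C^1$ and any zero of $u$ is a critical point, it follows that $\{u=0\}\cap B_{R_0}\subset\{x\cdot e<\sigma\}$ and $\{x\cdot e\ge\sigma\}\cap B_{R_0}\subset\{u>0\}$, and (taking $\eps$ small so that $\sigma<\delta\rho_0$) that $\partial_e u\ge\kappa(1+s)(\delta\rho_0)^s-\eps\ge c(\delta\rho_0)^s$ on $\{x\cdot e\ge\delta\rho_0\}\cap B_{R_0}$, which is \eqref{eq:e-der-lower-bound}.

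The core step is \eqref{eq:monotonicity-cone}, i.e. $w:=\partial_{e'}u\ge0$ in $B_{\rho_0}$; by the previous paragraph only the slab $\{x\cdot e<\sigma\}$ is at stake. Here $w$ satisfies $Lw=\partial_{e'}f\ge-\eps$ in $\{u>0\}$ (by interior regularity $w$ is classical there), $w=0$ on $\{u=0\}$ (including $\partial\{u>0\}$, where $\nabla u$ vanishes), $w\ge0$ on $\{x\cdot e\ge\sigma\}$, $w\ge-2\eps$ in $B_{R_0}$, and $|w(x)|\le|x|^{s+\alpha}$ for $|x|\ge R_0$. The plan is to combine two facts. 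The semiconvexity (ii) gives, for any $x_1$ with $w(x_1)=-m<0$, that the scalar function $t\mapsto u(x_1-te')$ has second derivative $\ge-\eps$ and first derivative $m>0$ at $t=0$, so integration yields $u(x_1-(m/\eps)e')\ge u(x_1)+m^2/(2\eps)$; comparing this with the upper bound $u\le u_0+\eps$ at that point, whose $e$-coordinate is $<\sigma-(m/\eps)\delta$, and choosing $R_0$ large so that the backward segment stays inside $B_{R_0}$, forces $m\le C\eps$. To upgrade this $O(\eps)$ bound to $w\ge0$ one runs a comparison argument on the domain $D:=\{u>0\}\cap\{x\cdot e<\sigma\}\cap B_{\rho_0}$, where $w$ is a supersolution that is nonnegative on $\partial D\setminus\partial B_{\rho_0}$: the far-field contribution of $L$ at points of $D$ is controlled by the growth $|w|\le|x|^{s+\alpha}$ via \autoref{lemma:tail-est-p}, producing a term $\le C\rho_0^{2s}R_0^{\alpha-s}$ that is negligible as $R_0\to\infty$ since $\alpha<s$, while the mass of $K$ in the direction $-e$ that points into $\{u=0\}$ is bounded below by \autoref{lemma:K-mass}; this lets one compare $w$, via the comparison principle \cite[Corollary 2.3.8]{FeRo23}, against a barrier whose shape is adapted to the essentially half-space geometry of $\{u=0\}$ — together with the weak Harnack inequality \autoref{lemma:wHI-p} in place of the boundary Harnack used for kernels comparable to $(-\Delta)^s$ — and conclude $w\ge0$. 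Making this barrier and this comparison work for kernels that may be degenerate in some directions and admit no Harnack inequality is the main obstacle, and is precisely where the generality of $K$ bites.

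Finally, once $\partial_{e'}u\ge0$ in $B_{\rho_0}$ for every $e'\in\mathbb{S}^{n-1}$ with $e\cdot e'\ge\delta$, the sets $\{u>0\}\cap B_{\rho_0}$ and $\{u=0\}\cap B_{\rho_0}$ are, inside $B_{\rho_0}$, invariant under the convex cone $\{te':t\ge0,\ e\cdot e'\ge\delta\}$ and its opposite respectively, since $u(x)>0$ together with $\partial_{e'}u\ge0$ gives $u(x+te')>0$. As $0\in\partial\{u>0\}$, a standard cone-of-monotonicity argument then yields $\{u>0\}\cap B_{\rho_0}=\{x\cdot e>g(x')\}\cap B_{\rho_0}$ in suitable coordinates, with $\mathrm{Lip}(g)\le\delta/\sqrt{1-\delta^2}\le c(n)\delta$, i.e. a Lipschitz domain with constant less than $c\delta$, which completes the proof.
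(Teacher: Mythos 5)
Your first and third parts are fine and essentially coincide with the paper's proof: the bulk positivity of $\partial_{e'}u$ on $\{x\cdot e\ge\sigma\}$, the bound \eqref{eq:e-der-lower-bound}, and the deduction of the flat Lipschitz graph from the cone of monotonicity directions are all read off from (iv) exactly as in the paper. The gap is in the core step, $\partial_{e'}u\ge0$ in the slab $\{x\cdot e<\sigma\}\cap B_{\rho_0}$, which you yourself flag as ``the main obstacle'' without resolving it. Two concrete problems. First, your semiconvexity detour only yields $\partial_{e'}u\ge -C\eps$, which is already contained in (iv) (indeed $|\partial_{e'}(u-u_0)|\le\eps$ and $\partial_{e'}u_0\ge0$ give $\partial_{e'}u\ge-\eps$ directly), so it contributes nothing toward the strict inequality $\ge0$. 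Second, the comparison argument on $D=\{u>0\}\cap\{x\cdot e<\sigma\}\cap B_{\rho_0}$ cannot close as stated: $w=\partial_{e'}u$ is \emph{not} nonnegative on the complement of $D$ (it is only $\ge-\eps$ on $B_{R_0}\setminus D$ and of uncontrolled sign outside $B_{R_0}$), so comparison against $0$ or against a barrier vanishing outside $D$ can only return $w\ge-\eps-(\text{errors})$, not $w\ge0$. Moreover, the kernel mass you invoke --- ``in the direction $-e$ that points into $\{u=0\}$'' --- is the wrong one: on $\{u=0\}$ one only knows $w=0$, which produces no favorable sign.

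The mechanism the paper actually uses is a sliding-bump/touching argument that exploits the mass of $K$ in the direction $+e$, pointing into the region $\{x\cdot e\ge\delta\}\cap B_{R_0}$ where $w\ge\tfrac{\kappa_0}{2}\delta^{1+s}$ is bounded below by a \emph{fixed positive constant}. One sets $w=\mathbbm{1}_{B_{R_0}}\partial_{e'}u$ and slides $\psi_t=-\eps-t+\eps\psi(\cdot/\rho_0)$ from below; at a first touching point $z\in B_{\rho_0}\setminus\{u=0\}$ (which exists if $w<0$ somewhere in $B_{\rho_0}$, since $w\ge\psi_t$ automatically on $\{u=0\}$ and outside $B_{2\rho_0}$), one evaluates $L(w-\psi_t)(z)$ two ways: from below by $-\eps-cR_0^{\alpha-s}-c\eps\rho_0^{-2s}$ (supersolution property, tail estimate \autoref{lemma:tail-est-p}, and $|L\psi_t|\le c\eps\rho_0^{-2s}$), and from above, using $w-\psi_t\ge0$ everywhere and \autoref{lemma:K-mass} applied to a cone--annulus around direction $e$ contained in $\{x\cdot e\ge\delta\}\cap B_{R_0}$, by $-\tfrac{\kappa_0}{2}\delta^{1+s}\,cr^{-2s}$, a fixed negative constant. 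Letting $\eps\to0$, $R_0\to\infty$ gives the contradiction. This quantitative interplay --- the three vanishing error terms against one fixed negative constant coming from the far positive region --- is what is missing from your sketch; the weak Harnack inequality, which you invoke, plays no role in this lemma.
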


\begin{proof}
We define $u_0(x) = \kappa (x \cdot e)_+^{1+s}$ and observe that for any $e' \in \mathbb{S}^{n-1}$, we have $\partial_{e'} u_0(x) = (1+s)\kappa (e \cdot e')(x \cdot e)_+^s$. Therefore, whenever $\delta > 0$ and $e \cdot e' > \delta$, it holds
\[\begin{split}
\partial_{e'} u_0 \ge 0 ~~ \text{ in } \R^n, \qquad \partial_{e'} u_0 \ge \kappa_0 \delta^{1+s} ~~ \text{ in } \{x \cdot e  \ge \delta \}.
\end{split}\]
Let us choose $\eps \le \frac{\kappa_0}{2} \delta^{1+s}$. Then,  since
\begin{align}
\label{eq:der-closeness}
|\partial_{e'} u - \partial_{e'} u_0 | \le \eps ~~ \text{ in } B_{R_0},
\end{align}
we obtain that $w := \mathbbm{1}_{B_{R_0}}\partial_{e'} u$ satisfies
\[\begin{split}
|L w| &\le \eps + cR_0^{\alpha-s} ~~ \text{ in } B_{R_0/2} \setminus E ~~ \text{ in the distributional sense},\\
w &\equiv 0 ~~ \text{ in } E \cup (\R^n \setminus B_{R_0}),\\
w &\ge - \eps ~~ \text{ in } B_{R_0},\\
w &\ge \frac{\kappa_0}{2} \delta^{1+s} ~~ \text{ in } \{ x \cdot e \ge \delta \} \cap B_{R_0},
\end{split}\]
where we set $E = \{ u = 0\}$. The first property follows from (iii), \eqref{eq:Gupper} and \autoref{lemma:tail-est-p}, and the stability of distributional solutions (see \cite[Proposition 2.2.31]{FeRo23}) applied to $D_h u$. The second property follows from \autoref{prop:C1-tau}.

Now we turn to the actual proof of \eqref{eq:monotonicity-cone}. We claim that for any $\rho_0 > 0$ and $\delta < \rho_0$, we can find $R_0 > 1$ and $\eps  > 0$ such that $w \ge 0$ in $B_{\rho_0}$, where $R_0,\eps$ depend only on $\lambda,\Lambda,n,s$, $\delta$, $\rho_0$, and $\kappa_0$.\\
To prove the claim, we follow the proof of \cite[Lemma 4.4.13]{FeRo23}:
First, we choose a radial bump function $\psi \in C_c^{\infty}(B_2)$ with $\psi \equiv 1$ in $B_1$, $0 \le \psi \le 1$, and set for $t > 0$
\[\begin{split}
\psi_t(x) = - \eps - t + \eps \psi(x / \rho_0).
\end{split}\] 
Let us assume that the claim is false. In that case, there exist $z \in B_{\rho_0} \setminus E$ and $t > 0$ such that $\psi_t$ touches $w$ from below at $z$. Note that therefore, by \autoref{lemma:distr-ptw-evaluate} it holds $Lw(z) \ge -\eps$ in the pointwise sense ($\partial_{e'} u$ is H\"older-continuous by \autoref{prop:C1-tau}). Clearly, we have the following estimate
\[\begin{split}
L(w-\psi_t)(z) \ge Lw(z) - |L \psi_t(z)| \ge -\eps - c R_0^{\alpha-s} - c_2 \eps \rho_0^{-2s}.
\end{split}\]

Note that we used \eqref{eq:Gupper} and the properties of $\psi$ in order to estimate $|L \psi_t(z)| \le c_2 \eps \rho_0^{-2s}$. 
On the other hand, we have, using that $(w-\psi_t)(z) = 0$, $w \ge \psi_t$ and $\psi_t \le 0$:
\[\begin{split}
L(w - \psi_t)(z) &= - \int_{\R^n} (w-\psi_t)(x) K(z-x) \d x\\
&\le - \int_{\{ x \cdot e \ge \delta \} \cap B_{R_0}} (w-\psi_t)(x) K(z-x) \d x\\
&\le - \frac{\kappa_0}{2}\delta^{1+s}  \int_{\{ x \cdot e \ge \delta \} \cap B_{R_0}} K(z-x) \d x\\
&\le - \frac{\kappa_0}{2}\delta^{1+s} C,
\end{split}\]
where $C = C(\kappa_0,\delta,\lambda,\Lambda,s,n,\rho_0) > 0$.
Let us explain how to prove the last estimate. First of all, due to \autoref{lemma:K-mass}, there exists $\delta_0 > 0$ depending only on $n,s,\lambda,\Lambda$ such that for any $r  > 0$
\[\begin{split}
\int_{\left\{\frac{z-x}{|z-x|} \cdot e \ge \delta_0 \right\} \cap (B_{2r}(z) \setminus B_r(z))} K(z-x) \d x \ge c r^{-2s}.
\end{split}\]
Let us take any $\delta \le \rho_0$. Clearly, there exists $r > 0$ depending only on $\rho_0, \delta_0$ such that for any $z \in B_{\rho_0}$ it holds:
\[\begin{split}
 \left\{ \frac{z-x}{|z-x|} \cdot e \ge \delta_0 \right\} \cap ( B_{2r}(z) \setminus B_r(z)) \subset \{ x \cdot e \ge \delta \} \cap ( B_{2r}(z) \setminus B_r(z)).
\end{split}\]
Upon choosing $R_0$ so large that $B_{2r}(z) \subset B_{R_0}$ (this choice only depends on $\rho_0$), we obtain 
\[\begin{split}
- \frac{\kappa_0}{2}\delta^{1+s}  \int_{\{ x \cdot e \ge \delta \} \cap B_{R_0}} K(z-x) \d x & \leq  - \frac{\kappa_0}{2}\delta^{1+s} \int_{\left\{\frac{z-x}{|z-x|} \cdot e \ge \delta_0 \right\} \cap (B_{2r}(z) \setminus B_r(z))} K(z-x) \d x\\
&\le - \frac{c\kappa_0}{2}\delta^{1+s} r^{-2s},
\end{split}\]
as desired.\\
Having at hand the two-sided estimate for $L(w-\psi_t)(z)$, we obtain a contradiction upon taking the limit $R_0 \to \infty$ and $\eps \to 0$. This implies \eqref{eq:monotonicity-cone}, as claimed.

Thus, by \eqref{eq:monotonicity-cone}, $\{ u > 0\} \cap B_{\rho_0}$ is a Lipschitz epigraph in direction $e$ with Lipschitz constant bounded by $c \delta$. 
Indeed, to see this, we can follow the proof of \cite[Proposition 4.4.15]{FeRo23} and obtain
\[\begin{split}
u &= 0 ~~ \text{ in } \Sigma_-,\\
u &> 0 ~~ \text{ in } \Sigma_+,
\end{split}\]
where 
\[\begin{split}
\Sigma_{\pm} = \{x \in B_{\rho_0} : x = x_0 \pm t \tau, ~~ \tau \in \mathbb{S}^{n-1}, ~~\tau \cdot e \ge \delta, ~~ t > 0 \}
\end{split}\]
for any $x_0 \in B_{\rho_0} \cap \partial \{ u > 0\}$. This implies that $\partial \{ u > 0\} \cap B_{\rho_0}$ satisfies the interior and exterior cone condition with explicit cones $\Sigma_{\pm}$, and therefore $\partial \{ u > 0\} \cap B_{\rho_0}$ is Lipschitz with constant bounded by $c \delta$.

The last claim, namely \eqref{eq:e-der-lower-bound}, follows from the observation that for $x \in \{ x \in \R^n :  x \cdot e \ge \delta \rho_0 \} \cap B_{R_0}$ it holds
\[\begin{split}
\partial_e u_0(x) = (1+s)\kappa (x \cdot e)_+^s \ge \kappa_0 (\delta \rho_0)^s.
\end{split}\]
Moreover, by choosing $\eps < \frac{\kappa_0}{2} (\delta \rho_0)^s$ (making it smaller, if necessary) and \eqref{eq:der-closeness}, we obtain
\[\begin{split}
\partial_e u(x) \ge \kappa_0 (\delta \rho_0)^s - \eps \ge \frac{\kappa_0}{2} (\delta \rho_0)^s,
\end{split}\]
as desired.
\end{proof}

\subsection{Uniform non-degeneracy near the free boundary}

We already know that the free boundary is Lipschitz with a small enough constant. In order to prove the $C^{1,\gamma}$-regularity of the free boundary (see \autoref{prop:4.4.15}), we are lacking  control on the non-degeneracy of the solutions close to the free boundary. This property is established in \autoref{lemma:lower-dist-bound}. A key ingredient in its proof is the existence of suitable barrier functions (see \cite{RoSe17}):

\begin{lemma}
\label{lemma:C1-barrier}
Assume \eqref{eq:Glower} and \eqref{eq:Gupper}. Let $K$ be homogeneous. Then, for every $\theta \in (0,s)$ there is $\eta > 0$, depending only on $n,s,\lambda,\Lambda,\theta$, such that the functions
\[\begin{split}
\Phi(x) = \left( e \cdot x - \eta |x| \left[1 -  \left(\frac{x}{|x|} \cdot e \right)^2 \right] \right)^{s+\theta}_+,\\
\Psi(x) = \left( e \cdot x + \eta |x| \left[1 -  \left(\frac{x}{|x|} \cdot e \right)^2 \right] \right)^{s-\theta}_+
\end{split}\]
satisfy for some constant $c > 0$, depending only on $n,s,\lambda,\Lambda,\theta$,
\[\begin{split}
\begin{cases}
L \Phi &\le -c d^{\theta - s} ~~ \text{ in } \mathcal{C}_{\eta},\\
\Phi &= 0 ~~ \text{ in } \R^n \setminus \mathcal{C}_{\eta},
\end{cases} \qquad\qquad
\begin{cases}
L \Psi &\ge c d^{-\theta - s} ~~ \text{ in } \mathcal{C}_{-\eta},\\
\Psi &= 0 ~~ \text{ in } \R^n \setminus \mathcal{C}_{-\eta},
\end{cases}
\end{split}\]
where 
\[\begin{split}
\mathcal{C}_{\pm \eta} = \left\{ x \in \R^n : \frac{x}{|x|} \cdot e > \pm \eta  \left[ 1 - \left(\frac{x}{|x|} \cdot e \right)^2 \right] \right\}.
\end{split}\]
\end{lemma}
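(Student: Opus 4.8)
The plan is to build both $\Phi$ and $\Psi$ as perturbations of the one‑dimensional profiles $t_+^{s+\theta}$ and $t_+^{s-\theta}$, following the scheme of \cite{RoSe17}; the only genuinely new point is that, since $K$ need not satisfy \eqref{eq:Kcomp}, every constant (and the smallness threshold for $\eta$) must be shown to depend only on $n,s,\lambda,\Lambda,\theta$. I will run the argument for $\Phi$; the one for $\Psi$ is identical after flipping signs and exponents — the exponent $s-\theta\in(0,s)$ makes $\Psi$ a supersolution instead of a subsolution, and $\mathcal{C}_{-\eta}\supset\{e\cdot x>0\}$ while $\mathcal{C}_\eta\subset\{e\cdot x>0\}$ (because $1-(x/|x|\cdot e)^2\ge0$). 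The vanishing statements $\Phi\equiv0$ on $\R^n\setminus\mathcal{C}_\eta$ and $\Psi\equiv0$ on $\R^n\setminus\mathcal{C}_{-\eta}$ are immediate, since $\mathcal{C}_{\pm\eta}$ is by construction the positivity set of the base function. Finally, since $K$ is homogeneous of degree $-n-2s$ and $\Phi$ is homogeneous of degree $s+\theta$, both $L\Phi$ and $d^{\theta-s}$ are homogeneous of degree $\theta-s$; hence it suffices to prove the estimate at points $x_0$ with $|x_0|=1$, with $d(x_0)=\dist(x_0,\partial\mathcal{C}_\eta)$ then ranging over a bounded interval.

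First I would record the one‑dimensional computation. Using polar coordinates $y=r\omega$ and the homogeneity \eqref{eq:hom}, for $u(x)=g(x\cdot e)$ with $g$ of suitable regularity and growth one gets $Lu(x)=c_{K,e}\,(-\Delta)^s_\R g(x\cdot e)$ with $c_{K,e}\asymp\int_{\mathbb{S}^{n-1}}|\omega\cdot e|^{2s}K(\omega)\,\d\omega$; by \autoref{lemma:K-mass} this is $\ge\delta_0^{2s}\lambda/2$, and by \eqref{eq:Gupper} it is $\le\Lambda$, so $c_{K,e}$ is bounded above and below by constants depending only on $n,s,\lambda,\Lambda$, \emph{uniformly in $e$}. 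Combined with the classical identities $(-\Delta)^s_\R(t_+^{s+\theta})=-\kappa_+\,t^{\theta-s}$ and $(-\Delta)^s_\R(t_+^{s-\theta})=\kappa_-\,t^{-\theta-s}$ for $t>0$, with $\kappa_\pm=\kappa_\pm(s,\theta)>0$ (see \cite[Theorem 1.10.15]{FeRo23}), this produces the flat barriers $\Phi_0(x)=(x\cdot e)_+^{s+\theta}$ and $\Psi_0(x)=(x\cdot e)_+^{s-\theta}$ with $L\Phi_0\le-c_0(x\cdot e)_+^{\theta-s}$ and $L\Psi_0\ge c_0(x\cdot e)_+^{-\theta-s}$ in the half‑space, $c_0=c_0(n,s,\lambda,\Lambda,\theta)>0$. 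The same identity, applied with $e$ replaced by the inward unit normal $\nu$ at a point of $\partial\mathcal{C}_{\pm\eta}\setminus\{0\}$, gives tangent comparison barriers with the same uniform constants, since such $\nu$ remain in a fixed compact set of directions.

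Next I would carry out the perturbation. Write $\bar\beta(x)=x\cdot e-\eta|x'|^2/|x|$ with $x'=x-(x\cdot e)e$, so $\Phi=\bar\beta_+^{s+\theta}$ and $0\le\Phi\le\Phi_0$ with equality on $\R_+e$. In the \emph{interior regime} $d(x_0)\simeq|x_0|$, splitting $\R^n$ into $B_{|x_0|/2}(x_0)$, where $\bar\beta$ is smooth and a second‑order Taylor expansion gives $|\Phi-\Phi_0|\lesssim\eta|x_0|^{s+\theta-2}|x-x_0|^2$, and its complement, where $0\le\Phi_0-\Phi\le\Phi_0\lesssim\eta^{s+\theta}|x|^{s+\theta}$ on $\{x\cdot e>0\}\setminus\mathcal{C}_\eta$, yields $|L\Phi(x_0)-L\Phi_0(x_0)|\le C\eta|x_0|^{\theta-s}$ using only \eqref{eq:Gupper}; hence $L\Phi(x_0)\le-(c_0-C\eta)|x_0|^{\theta-s}\le-c\,d(x_0)^{\theta-s}$ for $\eta$ small. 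In the \emph{boundary layer} $\rho:=d(x_0)\ll|x_0|\simeq1$, I would compare $\Phi$ with the tangent half‑space barrier $v(x)=((x-z_0)\cdot\nu)_+^{s+\theta}$ at the nearest boundary point $z_0$, for which $Lv(x_0)\le-c_0\rho^{\theta-s}$ by Step 1, and reduce everything to $L(\Phi-v)(x_0)\le C\eta\rho^{\theta-s}$; this I would prove by decomposing $\R^n$ into the dyadic annuli $B_{2^{k+1}\rho}(x_0)\setminus B_{2^k\rho}(x_0)$, on each of which $|\Phi-v|$ is controlled by the $O(\eta)$ principal curvatures of $\partial\mathcal{C}_\eta$ and by the modulus of continuity of $t\mapsto t^{s+\theta}$, the geometric series in $k$ converging because $\theta-s<0$, the annuli with $2^k\rho\gtrsim1$ being absorbed via $\Phi\le\Phi_0$ and \eqref{eq:Gupper}. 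Choosing $\eta=\eta(n,s,\lambda,\Lambda,\theta)$ so small that $C\eta\le c_0/2$ throughout completes the proof for $\Phi$; the mirror‑image argument gives $\Psi$, with $\mathcal{C}_{-\eta}$, exponent $s-\theta$, and all inequalities reversed.

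The hard part is exactly the boundary layer: there both $L\Phi$ and the target $d^{\theta-s}$ blow up, so one must extract the smallness factor $\eta$ from the slight curvature of $\partial\mathcal{C}_{\pm\eta}$ and from the nonlinearity of the power, \emph{uniformly in the base point}, while managing the singular principal‑value integral and the fact that $\Phi$ (resp. $\Psi$) is only $C^{s+\theta}$ (resp. $C^{s-\theta}$) up to the boundary — so the near‑boundary increments must be tamed by the annular cancellation rather than by touching with a $C^2$ function. Everything else reduces to bookkeeping with the uniform constants of Step 1 together with \eqref{eq:Glower}–\eqref{eq:Gupper}.
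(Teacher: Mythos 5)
Your proposal is correct in substance, and its two main ingredients match the paper's: the dimensional reduction $L[g(x\cdot e)]=c_{K,e}\,(-\Delta)^s_{\R}g(x\cdot e)$ with $c_{K,e}$ bounded below uniformly in $e$ via \autoref{lemma:K-mass} (this is exactly how the paper obtains \eqref{eq:C1-barrier-help2}, following \cite[Lemma B.1.6]{FeRo23}), and the perturbative comparison of $\Phi$ with the flat half-space barrier, splitting the integral into a ball where $C^2$-closeness is used against $\int |y|^2K(y)\,\d y$ and a far region handled by \eqref{eq:Gupper} and \autoref{lemma:tail-est-p} (this is \eqref{eq:C1-barrier-help1}). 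The genuine difference is the normalization. You scale to $|x_0|=1$, which leaves $d(x_0)$ unbounded below and forces the boundary-layer case $d(x_0)\ll 1$ — the part you rightly call the hard part, and which you only sketch (tangent half-space barrier in the direction $\nu$, dyadic annuli, curvature $O(\eta)$ of $\partial\mathcal{C}_\eta$, convergence of the series via $\theta-s<0$). That route can be made to work — the annular sums do close, yielding an error of order $\eta^{s+\theta}d^{\theta-s}$ — but it is substantially longer. The paper instead normalizes by homogeneity to the points $P+e$ with $P\in\partial\mathcal{C}_\eta$: since every point of $\mathcal{C}_\eta$ lies on a dilate of $e+\partial\mathcal{C}_\eta$, and $\dist(P+e,\partial\mathcal{C}_\eta)\simeq 1$ uniformly in $P$ for $\eta$ small, one only ever needs the estimate at unit distance from the boundary, so the flat comparison $\Vert\Phi_{P,\eta}-(1+e\cdot)_+^{s+\theta}\Vert_{C^2(B_{1/2})}\to0$ suffices and the boundary layer never appears. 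If you keep your normalization, you must fully write out the dyadic argument (including the zeroth-order term $(\Phi-v)(x_0)\int_{\R^n\setminus B_\rho(x_0)}K$, the gradient mismatch $|\nabla\bar\beta(z_0)|=1+O(\eta)$, and the fact that near the common zero set the difference is only controlled in the $C^{s+\theta}$ sense); with the paper's normalization all of that collapses into the single uniform limit \eqref{eq:C1-barrier-help1}.
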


The barriers $\Phi, \Psi$ have been introduced in \cite{RoSe17}. Since the cones $\mathcal{C}_{\pm \eta}$ merely have a Lipschitz boundary, one cannot expect the corresponding barriers to have homogeneity $s$, however, for any $\theta \in (0,1)$, one can find barriers with homogeneity $s \pm \theta$ if the cones $\mathcal{C}_{\pm \eta}$ are close enough to a half-space, i.e. have a small enough Lipschitz constant.\\
In the sequel, we shortly explain how the proof of \cite[Lemma 4.1]{FeRo23} can be adapted to general kernels satisfying only \eqref{eq:Glower} and \eqref{eq:Gupper}.

\begin{proof}
We only explain how to prove the result for $\Phi$. By homogeneity, it is enough to prove that $L \Phi \le -c$ on points belonging to $e + \partial \mathcal{C}_{\eta}$ for some $c > 0$. Given $P \in \partial \mathcal{C}_{\eta}$, we define
\[\begin{split}
\Phi_{P,\eta}(x) := \Phi(P+e+x) = (1 + e \cdot x - \eta \phi_P(x))_+^{s+\theta},
\end{split}\]
where $\phi_P$ is a function satisfying $\phi_P(0) = 0$, $|\nabla \phi_P(x)| \le C$ for any $x \in \R^n \setminus \{-P-e\}$, $|D^2 \phi_P(x)| \le C$ for any $x \in B_{1/2}$, $|\phi_P(x)| \le c |x|$ for $|x| \ge 1/2$. Moreover, note that 
\begin{align}
\label{eq:C2-convergence}
\Vert \Phi_{P,\eta} - (1+e \cdot)_+^{s+\theta} \Vert_{C^{2}(B_{1/2})} \to 0.
\end{align}
Note that the proof is complete, once we show that $L \Phi_{P,\eta}(0) \le -c$. This property follows, once we verify the following two properties:
\begin{align}
\label{eq:C1-barrier-help1}
L(\Phi_{P,\eta})(0) &\to L((1+e \cdot)_+^{s+\theta})(0), ~~ \text{ as } \eta \searrow 0,\\
\label{eq:C1-barrier-help2}
L((1+e \cdot)_+^{s+\theta})(0) &\le -c,
\end{align}
where $c = c(s,\theta,\lambda,\Lambda) > 0$.
To prove \eqref{eq:C1-barrier-help1},
\[\begin{split}
|L(\Phi_{P,\eta})(0) &- L((1+e \cdot)_+^{s+\theta})(0)| = \left|\int_{\R^n} [\Phi_{P,\eta}(y) - (1 + e \cdot y)_+^{s+\theta}] K(y) \d y \right| \\
&\le \Vert \Phi_{P,\eta} - (1+e \cdot)_+^{s+\theta} \Vert_{C^{2}(B_{1/2})} \int_{B_{1/2}} |y|^2 K(y) d y + C \int_{\R^n \setminus B_{1/2}} |\eta \phi_P(y)|^{s+\theta} K(y) \d y\\
&\le \Vert \Phi_{P,\eta} - (1+e \cdot)_+^{s+\theta} \Vert_{C^{2}(B_{1/2})} \int_{B_{1/2}} |y|^2 K(y) d y + C \eta^{s+\theta} \int_{\R^n \setminus B_{1/2}} |y|^{s+\theta} K(y) \d y\\
&\le C(\Vert \Phi_{P,\eta} - (1+e \cdot)_+^{s+\theta} \Vert_{C^{2}(B_{1/2})} + \eta^{s+\theta})\\
&\to 0 ~~ \text{ as } \eta \to 0,
\end{split}\]
where we used \eqref{eq:C2-convergence}, and in the last step we applied \eqref{eq:Gupper} and \autoref{lemma:tail-est-p}.
To prove \eqref{eq:C1-barrier-help2}, we use the symmetry and homogeneity of $K$ to compute 
\[\begin{split}
L((1+e\cdot)_+^{s+\theta})(0) \le - c,
\end{split}\]
for some $c > 0$, depending only on $n,s,\lambda,\Lambda$, following the arguments in \cite[Lemma B.1.6]{FeRo23}.
\end{proof}

We are now in the position to establish the non-degeneracy close to the free boundary.

\begin{lemma}
\label{lemma:lower-dist-bound}
Assume \eqref{eq:Glower} and \eqref{eq:Gupper}. Let $K$ be homogeneous, $\alpha \in (0,\min \{ s , 1-s\})$, and $\theta \in (0,\alpha)$.
For any $\kappa_0 > 0$ there is $\rho > 1$, depending only on $n,s,\alpha,\lambda,\Lambda,\kappa_0,\theta$, such that for any $\rho_0 > \rho$, there are $\eps > 0$, $R_0 > 1$, depending only on $\lambda,\Lambda,n,s,\rho_0,\kappa_0,\theta$, such that the following holds true:\\
Let $u \in C^{0,1}(\R^n)$ be such that
\begin{itemize}
\item[(i)] $\min \{ Lu - f , u \} = 0$ in $B_{R_0}$ in the distributional sense, where $|\nabla f| \le \eps$,
\item[(ii)] $0 \in \partial\{ u > 0 \}$, and $D^2 u \ge - \eps \mathrm{Id}$ in $B_{R_0}$,
\item[(iii)] $\Vert \nabla u \Vert_{L^{\infty}(B_R)} \le R^{s+\alpha}$ for any $R \ge R_0$,
\item[(iv)] $\Vert u - \kappa (x \cdot e)_+^{1+s} \Vert_{C^{0,1}(B_{R_0})} \le \eps$ for some $\kappa \ge \kappa_0$ and $e \in \mathbb{S}^{n-1}$.
\end{itemize}
Then, we have
\begin{align}
\label{eq:lower-dist-bound}
\partial_e u \ge c d^{s+\theta} ~~ \text{ in } B_{\rho_0}
\end{align}
for some constant $c > 0$, depending only on $n,s,\lambda,\Lambda,\kappa_0,\theta$.
\end{lemma}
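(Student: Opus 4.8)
The plan is to deduce \eqref{eq:lower-dist-bound} from the flat-Lipschitz structure of the free boundary together with the one-sided barriers of \autoref{lemma:C1-barrier}. First I would fix $\theta \in (0,\alpha)$ and let $\eta = \eta(n,s,\lambda,\Lambda,\theta)>0$ and the subsolution $\Phi$ (homogeneous of degree $s+\theta$, supported in the Lipschitz cone $\mathcal{C}_\eta$ around $e$, with $L\Phi \le -c\, d_{\mathcal{C}_\eta}^{\theta-s}$ in $\mathcal{C}_\eta$) be given by \autoref{lemma:C1-barrier}. I then apply \autoref{lemma:4.4.13} with a flatness parameter $\delta$ chosen small enough (depending only on $\eta$, hence on $\theta$) so that the interior cone produced there contains $\mathcal{C}_\eta$, and with a suitable intermediate radius slightly larger than $\rho_0$; this yields $\eps>0$, $R_0>1$ such that, under (i)--(iv), one has $\partial_{e'}u\ge 0$ in $B_{\rho_0}$ whenever $e\cdot e'\ge \delta$, the set $\{u>0\}\cap B_{\rho_0}$ is a Lipschitz epigraph in direction $e$ with small constant (so it satisfies interior/exterior cone conditions around $e$, and in particular $x_0 + \mathcal{C}_\eta \subset \{u>0\}$ inside $B_{\rho_0}$ for every free boundary point $x_0\in B_{\rho_0}$), together with the lower bound \eqref{eq:e-der-lower-bound}, i.e.\ $\partial_e u \ge c(\delta\rho_0)^s$ on $\{x\cdot e \ge \delta\rho_0\}\cap B_{R_0}$. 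Set $w := \partial_e u$: it is $C^\tau$ by \autoref{prop:C1-tau}, nonnegative in $B_{\rho_0}$ and identically zero on $\{u=0\}$, satisfies $w\ge -\eps$ in $B_{R_0}$ by the closeness hypothesis (iv), satisfies $Lw = \partial_e f \ge -\eps$ distributionally in $\{u>0\}$ (since $Lu=f$ there, passing to the limit in the difference quotients $D_h u$ as in the distributional machinery of \autoref{sec:prelim}), and is controlled at infinity by (iii).

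Next I would run a barrier comparison at each free boundary point $x_0 \in \partial\{u>0\}\cap B_{\rho_0/2}$, comparing $w$ with $c_1\,\Phi(\,\cdot - x_0)$ (smoothly cut off at a scale $r\sim\rho_0$) on the region $D := (x_0+\mathcal{C}_\eta)\cap B_r(x_0) \subset \{u>0\}$, for a small constant $c_1$. Outside $x_0+\mathcal{C}_\eta$ the barrier vanishes and $w \ge 0$ (up to the harmless $-\eps$); on the spherical cap $\partial B_r(x_0)\cap(x_0+\mathcal{C}_\eta)$ the geometry forces $x\cdot e \ge \delta\rho_0$ — because $|x_0\cdot e|$ is small by flatness and every ray of $\mathcal{C}_\eta$ makes a bounded positive angle with $e$ — so \eqref{eq:e-der-lower-bound} gives $w \ge c(\delta\rho_0)^s$ there, which dominates $c_1\Phi$ once $c_1$ is small; and inside $D$ one checks $L\big(w - c_1\Phi(\cdot - x_0)\big) \ge -\eps + c_1 c\, d_{\mathcal{C}_\eta}^{\theta-s} - (\text{cutoff error}) \ge 0$, the cutoff error and the contribution of $w$ outside $B_{R_0}$ being bounded by $C R_0^{\alpha-s}$- and $C r^{\theta-s}$-type quantities that are controlled because $s+\theta,\, s+\alpha < 2s$, with $\eps$ chosen small; the pointwise evaluation of $L$ at the touching points is legitimate by \autoref{lemma:distr-ptw-evaluate}. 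The comparison principle (\cite[Corollary 2.3.8]{FeRo23}) then gives $w \ge c_1\Phi(\cdot - x_0)$ in $D$, and since $\Phi(x-x_0) \gtrsim \dist(x,\{u=0\})^{s+\theta}$ for $x$ near $x_0$, this is \eqref{eq:lower-dist-bound} at a fixed scale. Because the hypotheses (i)--(iv) are scale-invariant (with $f$ rescaled), I would then iterate this over dyadic scales $2^{-k}$ around each free boundary point, re-applying \autoref{lemma:4.4.13} and the barrier comparison at each scale, to propagate the estimate down to $\partial\{u>0\}$ and across all of $B_{\rho_0}$.

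The main obstacle I expect is the possible degeneracy of $K$: since the operator may not "see" certain directions, one cannot estimate $L\Phi$ or $L$ of the cutoff bump from below by brute force, and one must invoke \autoref{lemma:K-mass} to locate the mass of $K$ on the sphere and to guarantee that, at every relevant point $z$, the operator at $z$ genuinely sees the cap region $\{x\cdot e \ge \delta\rho_0\}$ where the positivity \eqref{eq:e-der-lower-bound} lives — exactly the mechanism already used in the proofs of \autoref{lemma:4.4.13} and \autoref{lemma:C1-barrier} — so the cutoff, the cones $\mathcal{C}_\eta$, and the balls $B_r(x_0)$ must all be adapted to this geometry. A secondary difficulty, already encoded in the hypothesis $\theta\in(0,\alpha)$, is that the free boundary is only \emph{flat Lipschitz} and not $C^1$, which forces the suboptimal homogeneity $s+\theta$ for the barrier and hence the scaling iteration to reach the free boundary; and, since we work with weak/distributional rather than viscosity solutions, every pointwise evaluation of $L$ at a maximum of $w$ minus the barrier has to be routed through \autoref{lemma:distr-ptw-evaluate}, unlike in \cite{CRS17, FeRo23}.
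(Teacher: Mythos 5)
Your overall strategy is the paper's: use \autoref{lemma:4.4.13} to get the monotonicity cone, the inclusion $(x_0+\mathcal{C}_\eta)\subset\{u>0\}$ near each free boundary point, and the uniform positivity \eqref{eq:e-der-lower-bound} on the far cap $\{x\cdot e\ge\delta\rho_0\}$; then compare $\partial_e u$ with the homogeneous barrier $\Phi$ of \autoref{lemma:C1-barrier} via the comparison principle. However, there is a genuine gap in your verification of the interior inequality. After truncating $\Phi(\cdot-x_0)$ at scale $r\sim\rho_0$, the truncation adds to $L\Phi$ a \emph{positive} term of size $\sim r^{s+\theta}\cdot r^{-2s}=r^{\theta-s}$, which is of exactly the same order as the good term $c\,d^{\theta-s}$ at points with $d\sim r$ deep inside the cone; which sign wins depends on constants you do not control. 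The paper does not claim the truncated $\Phi$ is a subsolution at all — it only bounds $L(\Phi(\cdot-x_0)\mathbbm{1}_{B_{4\rho_0}})/\Vert\Phi(\cdot-x_0)\Vert_{L^\infty(B_{4\rho_0})}\le c_3$ with $c_3$ possibly positive — and restores negativity by \emph{augmenting the barrier} with $C_3\,\mathbbm{1}_{\{x\cdot e\ge\delta\rho_0\}\cap(B_{4\rho_0}\setminus B_{3\rho_0})}$, whose $L$ is $\le -c_2$ uniformly in $(x_0+\mathcal{C}_\eta)\cap B_{\rho_0}$ by \autoref{lemma:K-mass}; taking $C_3$ large gives $L\pi\le c_3-c_2C_3\le -1$, and then $\rho_0$ large absorbs $\eps+C_1\rho_0^{\alpha-s}$ (this is where the threshold $\rho$ in the statement comes from). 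You correctly sense that \autoref{lemma:K-mass} and the cap where \eqref{eq:e-der-lower-bound} holds must enter, but you use the cap only as exterior boundary data for the comparison, not as a component of the barrier — and as boundary data alone it does not repair the interior inequality.

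The concluding dyadic iteration is also off. A single comparison with the homogeneous barrier already yields $\partial_e v(x_0+te)\ge c\,t^{s+\theta}$ for all $t\in(0,1)$, i.e.\ the estimate all the way down to the free boundary; no propagation across scales is needed. Moreover, the iteration as you describe it (re-applying \autoref{lemma:4.4.13} at each dyadic scale) is not justified: hypothesis (iv) does not rescale with uniform $\eps$ under $u\mapsto r^{-1-s}u(r\,\cdot)$, so the flatness input at smaller scales is not available — indeed, self-improvement of flatness is part of what \autoref{prop:4.4.15} is ultimately proving, so this step would be circular. Replace the truncated-$\Phi$ barrier by the paper's two-piece barrier $\pi$ and drop the iteration, and the rest of your outline (monotonicity from \autoref{lemma:4.4.13}, pointwise evaluation via \autoref{lemma:distr-ptw-evaluate}, comparison via \cite[Corollary 2.3.8]{FeRo23}) matches the paper's proof.
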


\begin{proof}
Let $\theta \in (0,1)$ be given and $\eta$ be as in \autoref{lemma:C1-barrier}.
By \autoref{lemma:4.4.13}, for any $\rho_0 > 0$ and $\delta < \rho_0$ there are $\eps < 1$, $R_0 > 1$, depending only on $\lambda,\Lambda,n,s,\rho_0,\kappa_0,\theta$, such that for any $x_0 \in \partial \{ u > 0\} \cap B_{\rho_0}$
\begin{align}
\label{eq:cone-contained}
(x_0 + \mathcal{C}_{\eta}) \cap B_{2\rho_0}(x_0) \subset \{ u > 0 \} \cap B_{4\rho_0}.
\end{align}
Moreover, it holds
\[\begin{split}
\partial_e u &\ge 0 ~~ \text{ in } B_{4 \rho_0},\\
\partial_e u &\ge c_1 (\delta \rho_0)^{s} ~~ \text{ in } \{ x \in \R^n : x \cdot e \ge \delta \rho_0 \} \cap B_{4 \rho_0} \subset \{ u > 0\}
\end{split}\]
for some $c_1 > 0$ depending only on $\kappa_0$. Moreover we observe that (iii) and (iv) imply
\begin{align}
\label{eq:three-and-four}
\Vert \nabla u \Vert_{L^{\infty}(B_R)} \le C_0 R^{s+\alpha} ~~ \forall R \ge 1
\end{align}
for some $C_0 > 0$, depending only on $\kappa, s$.
Next, we define $v = u\mathbbm{1}_{B_{4\rho_0}}$. We have for any $x_0 \in \partial \{ v > 0 \} \cap B_{\rho_0/4}$, using \eqref{eq:cone-contained}, \eqref{lemma:tail-est-p}, as well as the previous three displays: 
\[\begin{split}
|L \partial_e v|& \le (\eps + C_1\rho_0^{\alpha-s}) ~~ \text{ in } (x_0 + \mathcal{C}_{\eta}) \cap B_{\rho_0},\\
\partial_e v &\ge 0 ~~ \text{ in } B_{4\rho_0},\\
\partial_e v &\ge c_1 (\delta \rho_0)^s ~~ \text{ in } \{x \in \R^n : x \cdot e \ge \delta \rho_0 \} \cap B_{4\rho_0},
\end{split}\]
where $C_1 > 0$ depends only on $n,s,\kappa,\lambda,\Lambda$. 
Moreover, note that the PDE in the first property holds true in the distributional sense, and by \autoref{prop:C1-tau}, we know that $\partial_e v \in C(\overline{B_{\rho_0}})$.\\
In particular, given any $\rho_0 > 1$ we can make $\delta \rho_0$ so small (by choosing $\delta = c \rho_0^{-1}$ for some small enough $c > 0$ depending on $\eta,\lambda,\Lambda$) such that 
\[\begin{split}
\partial_e v &\ge c_2 ~~ \text{ in } (x_0 + \mathcal{C}_{\eta}) \cap (B_{4\rho_0} \setminus B_{\rho_0}),\\
c_2 &\le \int_{ \{x \in \R^n : x \cdot e \ge \delta \rho_0 \} \cap (B_{4\rho_0} \setminus B_{3\rho_0})}\hspace{-0.8cm}  K(z-y) \d y = -L ( \mathbbm{1}_{\{x \in \R^n : x \cdot e \ge \delta \rho_0 \} \cap (B_{4\rho_0} \setminus B_{3\rho_0})})(z)  ~~ \forall z \in (x_0 + \mathcal{C}_{\eta}) \cap B_{\rho_0},
\end{split}\]
where $c_2 > 0$ depends only on $n,s,\lambda,\Lambda,\kappa_0,\eta$, and we used \autoref{lemma:K-mass} in order to establish the second property. 
Now, we define
\[\begin{split}
\pi(x) = \frac{\Phi(x-x_0)}{\Vert \Phi(\cdot -x_0) \Vert_{L^{\infty}(B_{4\rho_0})}} \mathbbm{1}_{B_{4\rho_0}}(x) + C_3 \mathbbm{1}_{\{x \in \R^n : x \cdot e \ge \delta \rho_0 \} \cap (B_{4\rho_0} \setminus B_{3\rho_0})}(x),
\end{split}\]
and observe that by \autoref{lemma:C1-barrier}
\[\begin{split}
L \pi &= \frac{L (\Phi(x-x_0) \mathbbm{1}_{B_{4 \rho_0}})}{\Vert \Phi(\cdot -x_0) \Vert_{L^{\infty}(B_{4\rho_0})}} + C_3 L ( \mathbbm{1}_{\{x \in \R^n : x \cdot e \ge \delta \rho_0 \} \cap (B_{4\rho_0} \setminus B_{3\rho_0})}) \\
&\le c_3 - c_2 C_3 \\
&\le -1 ~~ \text{ in } (x_0 + \mathcal{C}_{\eta}) \cap B_{\rho_0},
\end{split}\]
upon choosing $C_3 > 1$ large enough depending only on $n,s,\lambda,\Lambda$. \\
We define $\Pi(x) = c_2 \pi(x)/(1+C_3)$, and choose $\rho_0 > 1$ large enough (largeness depends on $n,s,\lambda,\Lambda,\kappa_0,\eta$) (and making $\eps < 1$ smaller, if necessary) such that $(\eps + C_1\rho_0^{\alpha-s}) \le c_2 /(1+C_3)$. Then, we have the following properties:
\[\begin{split}
L \Pi(x) &\le - c_2 /(1+C_3) \le - (\eps + C_1\rho_0^{\alpha-s}) \le L \partial_e v(x) \qquad \forall x \in (x_0 + \mathcal{C}_{\eta}) \cap B_{\rho_0},\\
\Pi(x) &\le c_2 \le \partial_e v(x) \qquad \forall x \in (x_0 + \mathcal{C}_{\eta}) \cap (B_{4\rho_0} \setminus B_{\rho_0}),\\
\Pi(x) &= 0 = \partial_e v(x) \qquad \forall x \in (x_0 + \mathcal{C}_{\eta}) \cap (\R^n \setminus B_{4\rho_0}),\\
\Pi(x) &= 0 \le \partial_e v(x) \qquad  \forall x \in  [\R^n \setminus (x_0 + \mathcal{C}_{\eta})] \setminus [\{ x \in \R^n : x \cdot e \ge \delta \rho_0 \} \cap (B_{4 \rho_0} \setminus B_{3\rho_0})],\\
\Pi(x) &\le c_2 \le \partial_e v(x) \qquad \forall x \in [\R^n \setminus (x_0 + \mathcal{C}_{\eta})] \cap [\{x \in \R^n : x \cdot e \ge \delta \rho_0 \} \cap (B_{4\rho_0} \setminus B_{3\rho_0})].
\end{split}\]
Altogether, we deduce from the comparison principle for continuous distributional solutions (see \cite[Corollary 2.3.8]{FeRo23}) that 
\[\begin{split}
\partial_e v(x) \ge \Pi(x),
\end{split}\]
which in particular means that for any $t \in (0,1)$ by the homogeneity of $\Phi$:
\[\begin{split}
\partial_e v(x_0 + t e) \ge c t^{s+\theta},
\end{split}\]
where $c > 0$ depends on $n,s,\lambda,\lambda,\kappa_0,\eta$, which implies \eqref{eq:lower-dist-bound}. 
This concludes the proof.
\end{proof}

\subsection{Proof of \autoref{prop:4.4.15}}

%
%
%

The goal of this section is to prove \autoref{prop:4.4.15}. First, we establish the following $C^{s-\theta}$-estimate up to the boundary, which holds true for domains with sufficiently small Lipschitz constants. This result is reminiscent of \cite[Lemma 5.2]{RoSe17}.

\begin{lemma}
\label{lemma:reg-up-to-Lipschitz}
Assume \eqref{eq:Glower} and \eqref{eq:Gupper}. Let $K$ be homogeneous. Let $\theta \in (0,s)$ and $K_0 > 0$.
Let $\delta \in (0,1)$ and $\Omega \subset \R^n$ be a Lipschitz domain in $B_1$ with constant less than $\delta$. Let $v \in C(\overline{B_1})$ be a distributional solution to
\[\begin{split}
|L v| &\le K_0 ~~ \text{ in } \Omega \cap B_1,\\
v &= 0 ~~ \text{ in } B_1 \setminus \Omega,
\end{split}\]
and assume that
\[\begin{split}
\Vert v \Vert_{L^{\infty}(B_R)} \le K_0 R^{2s-\theta} ~~ \forall R \ge 1.
\end{split}\]
Then, there is $\delta_0 > 0$, depending only on $n,s,\lambda,\Lambda,\theta$, such that if $\delta \le \delta_0$:
\[\begin{split}
\Vert v \Vert_{C^{s-\theta}(B_{1/2})} \le CK_0,
\end{split}\]
where $C > 0$ depends only on $n,s,\lambda,\Lambda,\theta$.
\end{lemma}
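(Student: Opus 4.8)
The plan is to reduce, by linearity, to $K_0=1$, and then to establish the estimate in two stages: a pointwise boundary decay $|v(x)|\le C\,\dist(x,\partial\Omega)^{s-\theta}$ obtained from a single-scale barrier argument, followed by an interpolation with the interior estimates of \cite{FeRo23}. The choice of $\delta_0$ will be forced by the barrier: given $\theta\in(0,s)$, I would take $\eta=\eta(n,s,\lambda,\Lambda,\theta)>0$ and $\Psi$ as in \autoref{lemma:C1-barrier}, and set $\delta_0:=\eta/2$.

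Assume $\delta\le\delta_0$ and fix $x_0\in\partial\Omega\cap B_{1/2}$. Since $\Omega$ is a Lipschitz domain in $B_1$ with constant $\le\delta$, after translating $x_0$ to the origin and rotating I would record three facts: $v(0)=0$ (continuity of $v$ together with $v\equiv0$ in $B_1\setminus\Omega$), $\Omega\cap B_{3/4}\subset\mathcal{C}_{-\eta}$, and — crucially, because the Lipschitz constant $\delta\le\eta/2$ is strictly below the opening $\eta$ — $\dist\big(\Omega\cap(B_{3/4}\setminus B_{1/2}),\,\partial\mathcal{C}_{-\eta}\big)\ge c_1>0$ with $c_1=c_1(n,s,\theta)$. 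By $(s-\theta)$-homogeneity of $\Psi$ this yields $\Psi\ge c_2>0$ on $\Omega\cap(B_{3/4}\setminus B_{1/2})$ and $\Psi(x)\le c_3|x|^{s-\theta}$ on $\R^n$.

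Next I would set up the comparison. Let $\tilde v:=v\1_{B_{3/4}}$, which is bounded and equals $v$ on $B_{3/4}$. Writing $L\tilde v=Lv+L\big(v\1_{\R^n\setminus B_{3/4}}\big)$, the second term is bounded in $B_{1/2}$ by $\sup_{x\in B_{1/2}}\tail_K(v;1/4,x)\le C$ via \autoref{lemma:tail-est-p} and the growth hypothesis $\Vert v\Vert_{L^\infty(B_R)}\le R^{2s-\theta}$ (note $2s-\theta<2s$), so $|L\tilde v|\le C_\ast$ in $\Omega\cap B_{1/2}$ in the distributional sense, $C_\ast=C_\ast(n,s,\lambda,\Lambda,\theta)$. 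I would then pick $M=M(n,s,\lambda,\Lambda,\theta)$ with $Mc_\Psi\ge C_\ast$ and $Mc_2\ge1$ ($c_\Psi$ from \autoref{lemma:C1-barrier}). In $\Omega\cap B_{1/2}$ (where $d_{\mathcal{C}_{-\eta}}\le1$) one has $L(M\Psi)\ge Mc_\Psi\, d_{\mathcal{C}_{-\eta}}^{-s-\theta}\ge C_\ast\ge L\tilde v$; outside $\Omega\cap B_{1/2}$ one checks $\tilde v\le M\Psi$ case by case (on $B_{3/4}\setminus\Omega$: $\tilde v=0\le M\Psi$; on $\Omega\cap(B_{3/4}\setminus B_{1/2})$: $\tilde v=v\le\Vert v\Vert_{L^\infty(B_1)}\le1\le Mc_2\le M\Psi$; on $\R^n\setminus B_{3/4}$: $\tilde v=0\le M\Psi$). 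The comparison principle \cite[Corollary 2.3.8]{FeRo23} then gives $\tilde v\le M\Psi$ on $\R^n$, and the same run with $-v$ yields $|v(x)|\le M\Psi(x)\le Mc_3|x|^{s-\theta}$ on $B_{1/2}$. Undoing the translation and, for $x\in B_{1/4}$ near the boundary, choosing $x_0$ a nearest boundary point (which then lies in $B_{1/2}$) gives $|v(x)|\le C\,\dist(x,\partial\Omega)^{s-\theta}$, while $|v|\le C$ from the growth hypothesis otherwise.

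Finally I would interpolate. For $x\in\Omega\cap B_{1/4}$ with $d:=\dist(x,\partial\Omega)$, the interior estimates of \cite[Theorems 2.4.2, 2.4.3]{FeRo23} (valid under \eqref{eq:Glower}--\eqref{eq:Gupper}) applied to $v$ on $B_{d/2}(x)$ with $|Lv|\le1$, combined with $\Vert v\Vert_{L^\infty(B_d(x))}\le Cd^{s-\theta}$ and the tail bound, give a scale-invariant estimate $[v]_{C^{s-\theta}(B_{d/4}(x))}\le C$; the same holds on unit-scale balls around points at distance $\gtrsim1/8$ from $\partial\Omega$. A standard covering and summation argument (as in \cite[Lemma 5.2]{RoSe17}, using $s-\theta\in(0,1)$) then converts these together with the boundary decay into $\Vert v\Vert_{C^{s-\theta}(B_{1/2})}\le C$. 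The main obstacle I anticipate is the barrier step: one must simultaneously control the nonlocal tail created by truncating $v$ and — the subtler point — ensure $M\Psi$ dominates $v$ on the annulus $B_{3/4}\setminus B_{1/2}$ even though $\Psi$ vanishes on the lateral boundary of $\mathcal{C}_{-\eta}$; this is precisely why $\delta_0$ must be chosen strictly below the cone opening $\eta$, so that $\Omega$ stays uniformly away from $\partial\mathcal{C}_{-\eta}$ on that annulus.
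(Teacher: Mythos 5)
Your argument is essentially the paper's proof: the same supersolution $\Psi$ from \autoref{lemma:C1-barrier} is used in a comparison argument at each free boundary point to obtain the pointwise decay $|v(x)|\le C|x-x_0|^{s-\theta}$, followed by interior estimates at scale $d(x)$ and a covering argument. The only (cosmetic) difference is in how the annulus where the truncated solution is nonzero but the cone barrier may be small is handled: the paper truncates at $B_2$, builds $\Psi$ on the wider cone $\mathcal{C}_{-2\eta}$ so that it is bounded below on $\mathcal{C}_{-\eta}\supset\Omega$ locally, and adds the term $\mathbbm{1}_{B_2(x_0)\setminus B_{1/2}(x_0)}$ to the barrier, whereas you exploit the angular gap coming from $\delta\le\eta/2$ to bound $\Psi$ from below on $\Omega$ in the annulus; both work. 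One slip to correct: for $x_0\in\partial\Omega\cap B_{1/2}$ the ball $B_{3/4}(x_0)$ is not contained in $B_1$, and outside $B_1$ the hypotheses give neither $v=0$ off $\Omega$ nor the graph/cone containment, so the exterior inequality $\tilde v\le M\Psi$ is unverified on $B_{3/4}(x_0)\setminus B_1$; simply take radii $1/2$ and $1/4$ centered at $x_0$ (as the paper does) so that everything stays inside $B_1$.
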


\begin{proof}
First, let us consider $u = v \mathbbm{1}_{B_2}$ and observe that (after a normalization) $u$ satisfies
\[\begin{split}
|L u| &\le 1 ~~ \text{ in } \Omega \cap B_1,\\
u &= 0 ~~ \text{ in } B_1 \setminus \Omega,\\
u &= 0 ~~ \text{ in } \R^n \setminus B_2,\\
\Vert u \Vert_{L^{\infty}(\R^n)} &\le 1.
\end{split}\]
We claim that for any $x_0 \in \partial \Omega \cap B_{1/2}$ it holds
\begin{align}
\label{eq:reg-up-to-Lipschitz-help1}
|u(x)| \le C |x-x_0|^{s-\theta} ~~ \text{ in } \Omega \cap B_{1/4}(x_0).
\end{align}
To prove \eqref{eq:reg-up-to-Lipschitz-help1}, let us take $\eta \in (0,1)$ as in \autoref{lemma:C1-barrier}, and observe that by the assumption on $\Omega$, if the Lipschitz constant is small enough, depending on $\eta$, it holds 
\[\begin{split}
B_{1/2}(x_0) \cap \Omega \subset B_{1/2}(x_0) \cap (x_0 + \mathcal{C}_{-\eta}).
\end{split}\]
Moreover, let us define $\Psi$ as the function in \autoref{lemma:C1-barrier} with $2\eta$ and set
\[\begin{split}
\Pi = C\Psi(\cdot - x_0) + \mathbbm{1}_{B_2(x_0) \setminus B_{1/2}(x_0)},
\end{split}\]
where we choose $C > 0$ large enough, such that
\begin{align}
\label{eq:reg-up-to-Lipschitz-help2}
L \Pi \ge d_{x_0 + \mathcal{C}_{-\eta}}^{-\theta-s} \ge 1  ~~ \text{ in } B_{1/4}(x_0) \cap \Omega.
\end{align}
Note that by \eqref{eq:Gupper} it is easily seen that $L \mathbbm{1}_{B_2(x_0) \setminus B_{1/2}(x_0)} \ge -c$ in $B_{1/4}(x_0)$ for some constant $c > 0$, depending only on $n,s,\lambda,\Lambda$, and therefore one can find $C > 0$ satisfying \eqref{eq:reg-up-to-Lipschitz-help2}.\\
Then, it holds
\[\begin{split}
L \Pi \ge 1 \ge L u ~~ \text{ in } B_{1/4}(x_0) \cap \Omega.
\end{split}\]
Moreover, note that 
\[\begin{split}
\Psi(x) \ge c |x|^{s-\theta} ~~ \forall x \in \mathcal{C}_{-\eta},
\end{split}\]
where $c > 0$ depends on $\eta$. Therefore, we have upon choosing $C > 0$ large enough:
\[\begin{split}
\Pi \ge 1 \ge u ~~ \text{ in }( B_{1/2}(x_0) \setminus B_{1/4}(x_0)) \cap \Omega.
\end{split}\]
Moreover, by construction (and since $B_{1/2}(x_0) \setminus \Omega \subset B_1 \setminus \Omega$)
\[\begin{split}
\Pi &\ge 0 = u ~~ \text{ in } (B_{1/2}(x_0) \setminus \Omega) \cup (\R^n \setminus B_2(x_0)),\\
\Pi &\ge 1 \ge u ~~ \text{ in } B_{2}(x_0) \setminus B_{1/2}(x_0).
\end{split}\]
All in all, we can apply the comparison principle (see \cite[Corollary 2.3.8]{FeRo23}) and obtain
\[\begin{split}
u \le \Pi ~~ \text{ in } \R^n.
\end{split}\]
In particular, since $\Pi(x) = \Phi(x - x_0) \le c |x-x_0|^{s - \theta}$ for any $x \in \Omega \cap B_{1/2}(x_0)$, we obtain \eqref{eq:reg-up-to-Lipschitz-help1} (after repeating all the aforementioned arguments with $-\Pi$), as desired.\\
Next, we observe that by interior estimates (see \cite[Theorem 2.4.3]{FeRo23}) combined with a standard rescaling argument (see \cite[Proof of Proposition 2.5.4]{FeRo23}) it holds for any $x \in \Omega \cap B_{1/2}$
\[\begin{split}
[ u ]_{C^{s -\theta}(B_{d(x)/2}(x))} \le C d^{\theta-s}(x)(\Vert u \Vert_{L^{\infty}(B_{d(x)}(x))} + d^{s-\theta}(x)) \le C
\end{split}\]
for some constant $C > 0$, where we used \eqref{eq:reg-up-to-Lipschitz-help1} in the last step. Combining this estimate with \eqref{eq:reg-up-to-Lipschitz-help1} yields the desired result.
\end{proof}

\begin{remark}
\label{remark:almost-opt}
In the situation of \autoref{prop:4.4.15} it holds $u \in C^{1+s-\tilde{\eps}}$ for any $\tilde{\eps} \in (0,1)$ if $\eps > 0$ is small enough (depending on $\tilde{\eps}$). This follows by application of \autoref{lemma:reg-up-to-Lipschitz} with $v = \nabla u$, which is possible due to \autoref{prop:C1-tau}.
\end{remark}

Next, we observe that the pointwise boundary regularity estimate \cite[Proposition 5.4]{RoSe17} also holds true in our setup. This is because its proof is merely based on blow-up arguments and the Liouville theorem on a half-space, which remain true for general stable operators.

\begin{lemma}[see Proposition 5.4 in \cite{RoSe17}]
\label{lemma:proposition5.4}
Assume \eqref{eq:Glower} and \eqref{eq:Gupper}. Let $K$ be homogeneous. Let $\alpha \in (0,s)$ and $C_0 \ge 1$. Let $\delta \in (0,1)$ and $\Omega \subset \R^n$ be a Lipschitz domain in $B_{1/\delta}$ with constant less than $\delta$.


Then, there is $\delta_0 > 0$ such that the following holds true for any $\delta < \delta_0$: If $u,v \in L^{\infty}_{s+\alpha}(\R^n)$ are distributional solutions to
\[\begin{split}
L v_1, L v_2 &\le \delta ~~ \text{ in } B_{1/\delta} \cap \Omega,\\
v_1 = v_2 &= 0 ~~ \text{ in } B_{1/\delta} \setminus \Omega,\\
\Vert v_1 \Vert_{L^{\infty}(B_R)} &+ \Vert v_2 \Vert_{L^{\infty}(B_R)} \le C_0 R^{s+\alpha} ~~ \forall R \ge 1,
\end{split}\]
and 
\[\begin{split}
v_2 \ge 0 ~~ \text{ in } B_1, \qquad C_0^{-1} \le \sup_{B_1} v_2 \le C_0.
\end{split}\]
Then, there is $K \in \R$ with $|K| \le C$ such that
\[\begin{split}
|v_1(x) - K v_2(x)| \le C |x|^{s+\alpha} ~~ \forall x \in B_1,
\end{split}\]
where $C > 0$ depends only on $\delta,C_0,\alpha,s,\Lambda,\lambda,n$.
\end{lemma}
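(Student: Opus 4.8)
The plan is to follow the blow-up/compactness strategy of \cite[Proposition 5.4]{RoSe17}, and to check that every analytical ingredient it relies on is available in our setting under \eqref{eq:Glower} and \eqref{eq:Gupper}. First I would argue by contradiction and compactness: if the conclusion fails, there are sequences $\delta_k\to 0$, Lipschitz domains $\Omega_k$ in $B_{1/\delta_k}$ with constant $\le\delta_k$, operators $L_k$ with kernels satisfying \eqref{eq:Glower}--\eqref{eq:Gupper}, and solution pairs $v_1^k, v_2^k$ as in the hypotheses, for which
\[
\sup_{x\in B_1}\ \inf_{|K|\le C_k}\ \frac{|v_1^k(x)-K v_2^k(x)|}{|x|^{s+\alpha}}
\]
blows up as $C_k\to\infty$. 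One then considers the optimal $K_k$ at scale $r$, defines the rescalings $\tilde v_i^k(x)=v_i^k(rx)/\big(r^{s+\alpha}\theta_k(r)\big)$ where $\theta_k$ is the usual monotone "best-fit-deficit at scales $\ge r$" quantity, and chooses $k$ and $r$ so that the deficit is nearly attained and normalized to $1$. The key compactness input is the interior $C^{2s-\varepsilon}$-estimate \cite[Theorem 2.4.3]{FeRo23}, valid under \eqref{eq:Glower}--\eqref{eq:Gupper}, applied away from $\partial\Omega_k$, together with the boundary $C^{s-\theta}$-estimate of \autoref{lemma:reg-up-to-Lipschitz} (for Lipschitz domains with small constant) to get uniform regularity up to $\partial\Omega_k$; this gives, up to a subsequence, locally uniform convergence $\tilde v_i^k\to \tilde v_i$.

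Next I would identify the limit problem. Since the Lipschitz constants $\delta_k\to 0$, the (rescaled) domains $\Omega_k$ converge to a half-space $\{x\cdot e>0\}$ for some $e\in\mathbb S^{n-1}$, and the operators $L_k$ — having Fourier symbols comparable to $|\xi|^{2s}$ by \autoref{lemma:Fourier} — converge (again up to a subsequence) to some stable operator $L_\infty$ still satisfying \eqref{eq:Glower}--\eqref{eq:Gupper}, using the stability of distributional solutions \cite[Proposition 2.2.31]{FeRo23}. The right-hand sides $\delta_k\to 0$ disappear in the limit, so $\tilde v_1,\tilde v_2$ are global distributional solutions of $L_\infty \tilde v_i=0$ in $\{x\cdot e>0\}$, vanishing in $\{x\cdot e\le 0\}$, with the growth bound $\|\tilde v_i\|_{L^\infty(B_R)}\le C R^{s+\alpha}$ inherited from the normalization and the monotone deficit function. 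Here the half-space Liouville theorem for stable operators (as in \cite[Theorem 1.10.15]{FeRo23} in the one-dimensional profile, or its half-space counterpart used in \cite{RoSe17}) forces $\tilde v_2=c_2\,(x\cdot e)_+^{s}$ with $c_2>0$ (the positivity and the two-sided bound $C_0^{-1}\le\sup_{B_1}v_2\le C_0$ pass to the limit), and likewise $\tilde v_1$ is a linear combination $\tilde v_1=c_1\,(x\cdot e)_+^{s}$. Then $\tilde v_1-(c_1/c_2)\tilde v_2\equiv 0$, whereas the normalization of the deficit at the limit gives a contradiction: the best-fit deficit of $\tilde v_1$ against multiples of $\tilde v_2$ in $B_1$ is, by construction, bounded below by a positive constant.

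The main obstacle, and the step requiring the most care, is the half-space Liouville classification: I must ensure that a global distributional solution of $L_\infty w=0$ in $\{x_n>0\}$ with $w=0$ in $\{x_n\le 0\}$ and sub-$(s+\alpha)$ growth (with $\alpha<s$, hence growth strictly below $2s$) is necessarily $c(x_n)_+^{s}$, for a \emph{general} stable $L_\infty$ satisfying only \eqref{eq:Glower}--\eqref{eq:Gupper}. This is exactly the content invoked in \cite{RoSe17}, and it reduces, via the usual argument that such a solution depends only on $x_n$ (translation invariance in the tangential directions, obtained from the boundary regularity and uniqueness of the limit domain), to the one-dimensional statement \cite[Theorem 1.10.15]{FeRo23} combined with \cite[Lemma B.1.5]{FeRo23} — both of which, as already used in the proof of \autoref{lemma:cone-classification}, carry over to our class of kernels because the relevant profile is one-dimensional. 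A secondary technical point is to justify that the optimal constants $K_k$ in the deficit stay comparable to the normalization (so that the limiting ratio $c_1/c_2$ is finite), which follows from the lower bound $\sup_{B_1}v_2\ge C_0^{-1}$ together with the nonnegativity of $v_2$; this is routine once the compactness is set up. With these in place the proof closes exactly as in \cite[Proposition 5.4]{RoSe17}.
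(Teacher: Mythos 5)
Your proposal is correct and follows essentially the same route as the paper: the paper's proof simply defers to \cite[Proposition 5.4]{RoSe17} and checks that the required ingredients (the contradiction/compactness scheme with Lipschitz domains of vanishing constant converging to a half-space, the boundary H\"older estimate of \autoref{lemma:reg-up-to-Lipschitz}, the stability of distributional solutions, and the half-space Liouville theorem) all survive under \eqref{eq:Glower}--\eqref{eq:Gupper}, which is exactly the list of ingredients you identify and verify.
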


\begin{proof}
The proof goes exactly as in \cite[Proposition 5.4]{RoSe17}. \\
Note that although the statement in \cite{RoSe17} is written for a so-called improving Lipschitz domain $\Omega$, this property is never used in the proof. Instead, it suffices for $\Omega$ to be Lipschitz in a large enough ball with a small enough Lipschitz constant. This can be achieved by proving the main estimate (5.14) in \cite{RoSe17} for $\delta > 0$ small enough (where $\delta$ must be at least so small that \cite[Lemma 5.3]{RoSe17} is applicable). As in \cite{RoSe17} one proves (5.14) by contradiction, assuming that there are sequences $\Omega_j$ with Lipschitz parameter $\delta_j \to 0$ such that (5.14) fails. Then, clearly $\Omega_j$ (and therefore also $j \Omega_j$) converges to a half-space, as $j \to \infty$, which allows to carry out the proof as in \cite{RoSe17}. See also \cite[Proposition 5.1]{Tor23} for an analogous proof for flat Lipschitz domains in the parabolic setting

Moreover, the statement in \cite{RoSe17} and all the aforementioned lemmas in \cite{RoSe17} are written for equations with an unbounded right hand side. In our setting, we only require the result with a bounded right hand side, however the proofs remain unchanged.\\
Moreover, the result in \cite{RoSe17} is stated for viscosity solutions to fully nonlinear problems, instead of distributional solutions to a linear equation, and for kernels that are pointwise comparable to the one of the fractional Laplacian, instead of kernels that merely satisfy \eqref{eq:Glower} and \eqref{eq:Gupper}. However, note that the proof of \cite[Proposition 5.4]{RoSe17} itself, but also of the auxiliary result \cite[Lemma 5.3]{RoSe17}, can be rewritten into the setup of our work without changing any of the arguments. In fact, the Liouville theorem in a half-space (see \cite[Theorem 2.6.2]{FeRo23}) and the stability of distributional solutions (see \cite[Proposition 2.2.31]{FeRo23}) also hold true in our setup. The same applies to the H\"older regularity up to the boundary in Lipschitz domains, \cite[Lemma 5.2]{RoSe17}, which is reproved in this article (see \autoref{lemma:reg-up-to-Lipschitz}). 
\end{proof}

Now, we are in the position to explain how \autoref{lemma:4.4.13} and \autoref{lemma:proposition5.4} can be used to deduce $C^{1,\gamma}$-regularity of the free boundary:


The following proposition directly implies \autoref{prop:4.4.15}:

\begin{proposition}
\label{prop:4.4.15-prelim}
Assume \eqref{eq:Glower} and \eqref{eq:Gupper}. Let $K$ be homogeneous, $\alpha \in (0,\min \{ s , 1-s\} )$ and $\kappa_0 > 0$. Then, there are $\eps > 0$, $R_0 > 1$, depending only on $n,s,\lambda,\Lambda,\alpha,\kappa_0$, such that the following holds true:\\
Let $u \in C^{0,1}(\R^n)$ be such that
\begin{itemize}
\item[(i)] $\min \{ Lu - f , u \} = 0$ in $B_{R_0}$ in the distributional sense, where $|\nabla f| \le \eps$,
\item[(ii)] $D^2 u \ge - \eps \mathrm{Id}$ in $B_{R_0}$ with $0 \in \partial\{ u > 0 \}$,
\item[(iii)] $\Vert \nabla u \Vert_{L^{\infty}(B_R)} \le R^{s+\alpha}$ for any $R \ge R_0$,
\item[(iv)] $\Vert u - \kappa (x \cdot e)_+^{1+s} \Vert_{C^{0,1}(B_{R_0})} \le \eps$ for some $\kappa \ge \kappa_0$ and $e \in \mathbb{S}^{n-1}$.
\end{itemize}
Then, the free boundary $\partial \{ u > 0 \}$ is a $C^{1,\gamma}$-graph in $B_{1/2}$ and moreover, we have
\[\begin{split}
\Vert \nabla u \Vert_{C^s(B_{1/2})} \le C, \qquad \Vert \nabla u / d^s \Vert_{C^{\gamma}(\{u > 0\} \cap B_{1/2})} \le C,
\end{split}\]
where $C > 0$ depends only on $n,s,\lambda,\Lambda,\alpha,\kappa_0$, and $\gamma > 0$ depends only on $n,s,\lambda,\Lambda,\kappa_0$.
\end{proposition}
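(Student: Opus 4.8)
The plan is to deduce \autoref{prop:4.4.15-prelim} by combining three ingredients already at our disposal: the flat-Lipschitz regularity of the free boundary from \autoref{lemma:4.4.13}, the non-degeneracy $\partial_e u \ge c\, d^{s+\theta}$ from \autoref{lemma:lower-dist-bound}, and the pointwise boundary expansion of \autoref{lemma:proposition5.4} applied to the partial derivatives $\partial_j u$. The parameters will be fixed in the order: first $\theta\in(0,\alpha)$ (also $\theta<s$), then $\delta>0$ so small that the resulting Lipschitz constant $c\delta$ is below the smallness thresholds $\delta_0$ required in \autoref{lemma:proposition5.4} and \autoref{lemma:reg-up-to-Lipschitz}, then $\rho_0$ large enough (at least $2/\delta_0$), and finally $\eps>0$ small and $R_0>1$ large as produced by \autoref{lemma:4.4.13} and \autoref{lemma:lower-dist-bound}. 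We set $\gamma:=\alpha-\theta>0$.

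\emph{Set-up and a pointwise expansion.} By \autoref{lemma:4.4.13}, shrinking $\eps$ and enlarging $R_0$ if needed, $\{u>0\}\cap B_{\rho_0}$ is a Lipschitz domain in direction $e$ with constant $\le c\delta$, one has $\partial_{e'}u\ge 0$ in $B_{\rho_0}$ whenever $e\cdot e'\ge\delta$, and $\partial_e u\ge c_1>0$ on $\{x\cdot e\ge c_2\}\cap B_{R_0}$ for a small $c_2$. Applying \autoref{prop:C1-tau} to $D_h u$, together with (i), (iii) and the stability of distributional solutions, each $v_j:=\partial_j u$ is a continuous distributional solution of $|Lv_j|\le\eps$ in $\{u>0\}$, $v_j=0$ in $\{u=0\}$, with $\|v_j\|_{L^\infty(B_R)}\le CR^{s+\alpha}$ for all $R\ge1$ and $\|\nabla u\|_{L^\infty(B_1)}\le C$; by \autoref{lemma:lower-dist-bound} also $\partial_e u\ge c\, d^{s+\theta}$ in $B_{\rho_0}$, where $d=\dist(\cdot,\{u=0\})$. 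For $x_0\in\partial\{u>0\}\cap B_{1/2}$ the point $x_0+2c_2e$ belongs to $B_1(x_0)\cap\{x\cdot e\ge c_2\}$ (since $|x_0\cdot e|\le c\delta$), so $c_1\le\sup_{B_1(x_0)}\partial_e u\le C$. Hence, translating $x_0$ to the origin and using that $\{u>0\}$ is Lipschitz in $B_{\rho_0}\supset B_{1/\delta_0}$ with constant $\le\delta_0$, \autoref{lemma:proposition5.4} applies with $v_1=\partial_j u$ and $v_2=\partial_e u$ and yields, for each $j$, a constant $K_j(x_0)$ with $|K_j(x_0)|\le C$ and
\[
|\partial_j u(x)-K_j(x_0)\,\partial_e u(x)|\le C\,|x-x_0|^{s+\alpha}\qquad\text{for }x\in B_1(x_0).
\]
Writing $\nu(x_0):=(K_1(x_0),\dots,K_n(x_0))$, we get $|\nabla u(x)-\nu(x_0)\,\partial_e u(x)|\le C|x-x_0|^{s+\alpha}$ in $B_1(x_0)$; moreover $\nu(x_0)$ is close to $e$ by (iv) (since $\partial_j u_0=e_j\,\partial_e u_0$), so $|\nu(x_0)|\asymp1$.

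\emph{H\"older continuity of the normal and $C^{1,\gamma}$ of the free boundary.} Let $x_0,x_1\in\partial\{u>0\}\cap B_{1/2}$ and put $r:=|x_0-x_1|$, assumed small. Since the free boundary is flat Lipschitz, the point $z:=x_0+t_0r\,e$ (for a fixed small $t_0$) lies in $\{u>0\}$ with $d(z)\ge t_0r/2$, so $\partial_e u(z)\ge c\,d(z)^{s+\theta}\ge c\,r^{s+\theta}$ by \autoref{lemma:lower-dist-bound}. Both expansions of the previous step hold at $z$ (as $|z-x_i|\le Cr<1$), so
\[
|\nu(x_0)-\nu(x_1)|\,c\,r^{s+\theta}\le|\nabla u(z)-\nu(x_0)\partial_e u(z)|+|\nabla u(z)-\nu(x_1)\partial_e u(z)|\le C\,r^{s+\alpha},
\]
whence $|\nu(x_0)-\nu(x_1)|\le C\,r^{\alpha-\theta}=C\,r^{\gamma}$. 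Since at each free boundary point $\nabla u$ points, up to a higher-order error, along $\nu(x_0)$ on the positivity side, $\nu(x_0)/|\nu(x_0)|$ is the inner unit normal of $\{u>0\}$ at $x_0$; being $C^{0,\gamma}$ in $x_0$ while $\partial\{u>0\}$ is Lipschitz, this implies $\partial\{u>0\}$ is a $C^{1,\gamma}$ graph in $B_{1/2}$ (after a routine covering/shrinking, with constants absorbed; and $\gamma$ may be replaced by a possibly smaller exponent depending only on $n,s,\lambda,\Lambda,\kappa_0$ once the boundary regularity below is invoked).

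\emph{Optimal regularity and the main obstacle.} Knowing that $\{u>0\}$ is a $C^{1,\gamma}$ domain near $0$ and that $\partial_e u\ge c\,d^{s+\theta}$, the bounds $\|\nabla u\|_{C^s(B_{1/2})}\le C$ and $\|\nabla u/d^s\|_{C^\gamma(\{u>0\}\cap B_{1/2})}\le C$ follow from the boundary regularity estimates for stable operators in $C^{1,\gamma}$ domains, combined with the interior estimates of \autoref{prop:C1-tau} away from the free boundary and \autoref{lemma:reg-up-to-Lipschitz}; the pointwise expansion above identifies the boundary trace of $\nabla u/d^s$ with $c(x_0)\,\nu(x_0)$, which is bounded below and $C^{0,\gamma}$. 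I expect the delicate points to be, first, producing the H\"older modulus of the normal with a genuinely positive exponent $\gamma=\alpha-\theta$ even though the available non-degeneracy is only of order $d^{s+\theta}$ rather than $d^s$ (this is precisely why $\theta<\alpha$ is imposed and why one works with the slightly suboptimal \autoref{remark:almost-opt} regularity), and second, upgrading from the flat-Lipschitz setting to the sharp $C^{1,s}$/$C^\gamma$-estimates, which requires the boundary Schauder theory in $C^{1,\gamma}$ domains rather than merely in Lipschitz domains.
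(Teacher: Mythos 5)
Your overall architecture matches the paper's: the proof does indeed combine \autoref{lemma:4.4.13}, \autoref{lemma:lower-dist-bound} and \autoref{lemma:proposition5.4}, applies the latter to $\partial_j u$ and $\partial_e u$ at each free boundary point to get the expansion $|\partial_j u(x)-K_j(x_0)\partial_e u(x)|\le C|x-x_0|^{s+\alpha}$, and your comparison of the coefficients at two boundary points $x_0,x_1$ through an auxiliary point $z$ with $\partial_e u(z)\ge c\,r^{s+\theta}$ is exactly the ``boundary case'' of the paper's argument, yielding the exponent $\alpha-\theta$.

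There is, however, a genuine gap in how you pass from this to the $C^{1,\gamma}$ regularity of the free boundary. Knowing that the candidate normal $\nu(x_0)=(K_1(x_0),\dots,K_n(x_0))$ is H\"older continuous \emph{as a function of the boundary point} does not by itself imply that the Lipschitz graph is $C^{1,\gamma}$: one must also show that $\nu(x_0)$ really is the normal, i.e.\ that the free boundary admits a pointwise $C^{1,\gamma}$ tangent plane at $x_0$, which requires converting the gradient expansion into geometric information about the zero set of $u$. The paper does this by proving that the quotient $\partial_i u/\partial_n u$ is H\"older continuous on \emph{all} of $\{u>0\}\cap B_1$ — not just along the boundary — and then observing that the level sets $\{u=t\}$, $t>0$, are genuine $C^1$ hypersurfaces (since $\partial_n u>0$ there) with uniformly $C^\gamma$ normals $\nu_i=(\partial_i u/\partial_n u)\big/\big(\sum_j(\partial_j u/\partial_n u)^2+1\big)^{1/2}$, hence uniformly $C^{1,\gamma}$ graphs converging to $\partial\{u>0\}$. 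The interior half of that H\"older estimate (the case $d(x)\ge 2|x-y|$) is absent from your proposal and is not routine: it uses interior Schauder estimates applied to $\partial_i u-K\partial_n u$ on balls $B_{d(x)/2}(x)$, the almost-optimal regularity $u\in C^{1+s-\tilde\eps}$ from \autoref{remark:almost-opt}, and a careful balance of exponents (one needs $\alpha-2\theta>\alpha/2$ and $\tilde\eps<\alpha/4$, which is why the paper ends up with the exponent $\alpha/4$ rather than $\alpha-\theta$). Your final step is fine in spirit: once the boundary is $C^{1,\gamma}$, the bound on $\Vert\nabla u/d^s\Vert_{C^{\gamma_0}}$ follows from the boundary Schauder theory for stable operators in $C^{1,\gamma}$ domains (the paper cites \cite{DRSV22}) and $\Vert\nabla u\Vert_{C^s(B_{1/2})}\le C$ from \cite[Proposition 2.5.4]{FeRo23}, using the continuity of $\nabla u$ from \autoref{prop:C1-tau}.
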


\begin{proof}
We split the proof into several steps.\\
\textbf{Step 1:}
Let $\kappa_0 > 0$ be as in \autoref{prop:4.4.15}, $\delta_0 > 0$ be as in \autoref{lemma:proposition5.4} and $\delta' > 0$ be as in \autoref{lemma:4.4.13} and $\rho > 1$ be as in \autoref{lemma:lower-dist-bound}. Let us take $\delta < \min\{ \delta_0 , \rho \}$ and $\rho_0 > \rho$. Moreover, recall \eqref{eq:three-and-four}. Then, by \autoref{lemma:4.4.13}, we can find $\eps > 0$ so small and  $R_0 > 1$ so large that
\[\begin{split}
\{ u > 0 \} \cap B_{\rho_0} ~~ \text{ is Lipschitz with constant less than } \delta.
\end{split}\]
Moreover, by \autoref{lemma:lower-dist-bound}, we can find $\eps > 0$ so small and  $R_0 > 1$ so large that (after a rotation)
\begin{align}
\label{eq:lower-dist-bound2}
\partial_n u \ge c d^{s+\theta} ~~ \text{ in } B_{\rho_0}
\end{align}
for some $\theta \in (0,\alpha)$.
Finally, note that if we choose $\eps < \delta$ and $R_0 > 1/\delta$, then by assumption we have in the distributional sense:
%
\begin{align}
\label{eq:C1alpha-v1}
|L (\nabla u)| &\le \delta ~~ \text{ in } B_{1/\delta} \cap \{u > 0\},\\
\label{eq:C1alpha-v2}
\nabla u &= 0 ~~ \text{ in } B_{1/\delta} \setminus \{ u > 0\},\\
\label{eq:C1alpha-v3}
\Vert \nabla u \Vert_{L^{\infty}(B_R)} &\le R^{s+\alpha} ~~ \forall R \ge 1.
\end{align}

In particular, we obtain
\[\begin{split}
C_0^{-1} \le \sup_{B_1} \partial_n u \le C_0
\end{split}\]
for some constant $C_0 > 0$. 
Let us choose $\eps > 0$ so small and $R_0 > 1$ so large that all of the previous properties, i.e. \eqref{eq:lower-dist-bound2}, \eqref{eq:C1alpha-v1}, \eqref{eq:C1alpha-v2}, and \eqref{eq:C1alpha-v3} hold true. Then, we can apply \autoref{lemma:proposition5.4} and infer that for any $x_0 \in \partial \{ u > 0\} \cap B_{1/2}$ there exists $K(x_0) \in \R$  with $|K(x_0)| \le C$ such that it holds
\begin{align}
\label{eq:K-Holder-estimate}
|\partial_i u(x) - K(x_0)\partial_n u(x)| \le C |x-x_0|^{s+\alpha} \qquad \forall x \in B_{1}.
\end{align}

\textbf{Step 2:} First, we prove that $ \partial_i u  / \partial_n u \in L^{\infty}(\{ u > 0\} \cap B_{1})$. To see this, let $x \in \{ u > 0\} \cap B_{1}$ and take $x_0 \in \partial \{ u > 0 \} \cap B_1$ such that $|x - x_0| \le 2 d(x)$, where $d := d_{\{ u = 0\}}$. Then, we obtain
\begin{align}
\label{eq:Linfty-help1}
\left| \frac{\partial_i u(x)}{\partial_n u(x)} - K(x_0) \right| \le \frac{|\partial_i u(x) - K(x_0) \partial_n u(x)|}{|\partial_n u(x)|} \le c \frac{|x-x_0|^{s+\alpha}}{d^{s+\theta}(x)} \le c d^{\alpha - \theta}(x),
\end{align}
where we used \eqref{eq:K-Holder-estimate} and \eqref{eq:lower-dist-bound2}. Therefore,
\[\begin{split}
\left| \frac{\partial_i u(x)}{\partial_n u(x)} \right| &\le |K(x_0)| + \left| \frac{\partial_i u(x)}{\partial_n u(x)} - K(x_0) \right| \le C + cd^{\alpha - \theta}(x) \le c,
\end{split}\]
as desired.

\textbf{Step 3:} We are now in the position to prove that $\partial_i u  / \partial_n u \in C^{\alpha /4}(\{ u > 0\} \cap B_{1})$. We take $x,y \in \{ u > 0 \} \cap B_1$ and assume without loss of generality that $d(y) \le d(x)$. We distinguish between two different cases.\\
Case 1: First, we assume that $d(y) \le d(x) \le 2|x-y|$. In that case, given $x_0, y_0 \in \partial \{ u > 0 \} \cap B_1$ such that $d(x) \le 2 |x-x_0|$ and $d(y) \le 2 |y - y_0|$, we infer from \eqref{eq:Linfty-help1}:
\begin{align}
\label{eq:Holder-help1}
\begin{split}
\left| \frac{\partial_i u(x)}{\partial_n u(x)}  - \frac{\partial_i u(y)}{\partial_n u(y)} \right| &\le \left| \frac{\partial_i u(x)}{\partial_n u(x)} - K(x_0) \right| + \left| \frac{\partial_i u(y)}{\partial_n u(y)} - K(y_0) \right| \\
&\le c(d^{\alpha - \theta}(x) + d^{\alpha - \theta}(y)) \le c |x-y|^{\alpha - \theta}.
\end{split}
\end{align}
Second, we assume that $d(x) \ge 2 |x-y|$. In particular, this means that $y \in B_{d(x)/2}(x)$. Therefore, by \eqref{eq:K-Holder-estimate}, taking $x_0 \in \partial\{ u > 0\} \cap B_1$ with $|x-x_0| \le 2 d(x)$, there exists $K := K(x_0) \in \R$ such that $|K| \le C$ and
\[\begin{split}
\Vert \partial_i u - K \partial_n u \Vert_{L^{\infty}(B_{d(x)/2}(x))} \le c d^{s+\alpha}(x).
\end{split}\]
Therefore, using interior regularity estimates (see \cite[Theorem 2.4.3]{FeRo23}), we obtain
\[\begin{split}
[\partial_i u - K \partial_n u]_{C^{\alpha-\theta}(B_{d(x)/2}(x))}  \le c d^{s+\theta}(x).
\end{split}\]
Indeed,
\[\begin{split}
 & |[\partial_i u  - K \partial_n u](x) - [\partial_i u - K \partial_n u](y)| \\
&\le c \frac{|x-y|^{\alpha - \theta}}{d^{\alpha - \theta}(x)}  \Bigg( \Vert \partial_i u - K \partial_n u \Vert_{L^{\infty}(B_{d(x)/2}(x))} \\
&\qquad \qquad \qquad \qquad + d^{s+\alpha}(x) \sup_{R \ge d(x)/2} \frac{\Vert \partial_i u \Vert_{L^{\infty}(B_R)} + \Vert \partial_n u \Vert_{L^{\infty}(B_R)}}{R^{s+\alpha}}  + d^{2s}(x) \Vert \nabla f \Vert_{L^{\infty}(B_{d(x)/2}(x))} \Bigg)\\
&\le c \frac{|x-y|^{\alpha - \theta}}{d^{\alpha - \theta}(x)} d^{s+\alpha}(x) \le c d^{s+\theta}(x) |x-y|^{\alpha - \theta},
\end{split}\]
where we made use of \eqref{eq:C1alpha-v1} and \eqref{eq:C1alpha-v3}.

By combination of the previous two estimates, using also \eqref{eq:lower-dist-bound2} and that $\Vert \partial_n u \Vert_{C^{s-\tilde{\eps}}}(B_1) \le c$ for any $\tilde{\eps} \in (0,1)$ due to \autoref{remark:almost-opt}, we obtain:
\[\begin{split}
\left| \frac{\partial_i u(x)}{\partial_n u(x)} - \frac{\partial_i u(y)}{\partial_n u(y)} \right| &= \left| \frac{[\partial_i u - K \partial_n u](x)}{\partial_n u(x)} - \frac{[\partial_i u - K \partial_n u](y)}{\partial_n u(y)} \right| \\
&\le \left|\frac{1}{\partial_n u(y)} + \frac{1}{\partial_n u(x)} \right| \left| [\partial_i u - K \partial_n u](x) - [\partial_i u - K \partial_n u](y)\right| \\
&+ \left| \frac{[\partial_i u - K \partial_n u](x) + [\partial_i u - K \partial_n u](y)}{\partial_n u(x) \partial_n u(x)} \right| |\partial_n u(x) - \partial_n u(y)| \\
&\le c \frac{d^{s+\theta}(x)}{d^{s+\theta}(x) + d^{s+\theta}(y)} |x-y|^{\alpha - \theta} + c \frac{d^{s+\alpha}(x)}{d^{s+\theta}(x)d^{s+\theta}(y)} |x-y|^{s-\tilde{\eps}}\\
&\le c |x-y|^{\alpha - \theta} + d^{-s+\alpha - 2 \theta}(x) |x-y|^{s-\tilde{\eps}}\\
&\le c(|x-y|^{\alpha - \theta} + |x-y|^{\alpha/4}),
\end{split}\]
where we used in the last step that $d(y) \ge d(x) /2$, and also $\alpha - 2 \theta > \alpha/2$ and we chose $\tilde{\eps} < \alpha/4$ so that 
\[\begin{split}
d^{-s+\alpha - 2 \theta}(x) |x-y|^{s - \tilde{\eps}} \le d^{-s + \frac{\alpha}{2}}(x) |x-y|^{s - \tilde{\eps}} \le |x-y|^{\frac{\alpha}{2} - \tilde{\eps}} \le |x-y|^{\alpha/4}.
\end{split}\]
After combination with Step 2 and \eqref{eq:Holder-help1} we obtain  that $\partial_i u  / \partial_n u \in C^{\alpha/4}(\{ u > 0\} \cap B_{1})$.

\textbf{Step 4:} We have shown that for some $\gamma \in (0,1)$:
\begin{align}
\label{eq:quotient-Holder}
\left\Vert \partial_i u  / \partial_n u \right\Vert_{C^{\gamma}(\{ u > 0\} \cap B_{1})} \le C.
\end{align}

From here, the same arguments as in \cite{CRS17} yield that the free boundary is $C^{1,\gamma}$. In fact, the normal vectors of the level set $\{ u = t\}$ are of the form
\[\begin{split}
\nu_i(x) = \frac{\frac{\partial_i u(x)}{\partial_n u(x)}}{\left( \sum_{j = 1}^{n-1} \left(\frac{\partial_j u(x)}{\partial_n u(x)}\right)^2  + 1\right)^{1/2}} \qquad \forall x \in \{ u = t\}.
\end{split}\]
Hence, by \eqref{eq:quotient-Holder}, $\Vert \nu \Vert_{C^{\gamma}(\{ u = t\} \cap B_1)} \le C$. Thus, the level sets $\{ u  = t \}$ are uniformly $C^{1,\gamma}$ in $B_1$. Therefore, they converge (resp. their graphs converge) uniformly as $t \searrow 0$ to $\partial \{ u > 0\}$, which implies that $\partial \{u > 0\}$ is a $C^{1,\gamma}$-graph in $B_1$.\\
Next, we recall \cite[Corollary 1.3, using that $\gamma = s$ in the symmetric case]{DRSV22}), which states that due to the $C^{1,\gamma}$-regularity of the free boundary,
\[\begin{split}
\Vert \nabla u / d^s \Vert_{C^{\gamma_0}(\{u > 0\} \cap B_{1/2})} \le C
\end{split}\]
for some $\gamma_0 \in (0,\gamma]$ depending only on $n,s,\lambda,\Lambda,\gamma$, and $C > 0$, depending only on $n,s,\lambda,\Lambda,\alpha,\gamma$. Note that for kernels satisfying the upper bound in \eqref{eq:Kcomp}, this result can be found in \cite[Proposition 2.6.8]{FeRo23}.\\
Moreover, note that by \cite[Proposition 2.5.4]{FeRo23}, we have
\[\begin{split}
\Vert \nabla u \Vert_{C^s(B_{1/2})} \le C
\end{split}\]
for some $C > 0$, depending only on $n,s,\lambda,\Lambda,\alpha,\kappa_0$. Finally, observe that for the application of both results, we used that $\nabla u \in C(\overline{B_{1}})$ by \autoref{prop:C1-tau} (see also \cite[Remark 2.5.5]{FeRo23}).
\end{proof} 

\begin{proof}[Proof of \autoref{prop:4.4.15}]
Let us apply \autoref{prop:4.4.15-prelim} with $v(x) = R^{1+s} u(x/R)$ for some $R > 0$. Then, it holds
\[\begin{split}
\min \{ Lv - \tilde{f} , v \} &= 0 ~~ \text{ in } B_{R}, ~~ \text{ where } |\nabla \tilde{f}| \le R^{-s},\\
D^2 v &\ge - R^{s-1} ~~ \text{ in } B_R,\\
\Vert \nabla v \Vert_{L^{\infty}(\R^n)} &\le R^s,\\
\Vert v - \kappa (x \cdot e)_+^{1+s} \Vert_{C^{0,1}(B_{R})} &\le \eps.
\end{split}\]
Next, we choose $R \ge R_0$ so large that $R^{-s} \le R_0$, $R^{s-1} \le R_0$
The desired result follows upon application of \autoref{prop:4.4.15-prelim} to $v$ and rescaling back to $u$.
\end{proof}

\subsection{Proof of \autoref{thm:opt-reg}}

We finally give the:

\begin{proof}[Proof of \autoref{thm:opt-reg}]
Note that by \autoref{lemma:semiconvexity}, $u \in C^{0,1}(\R^n)$. Let us now denote $v = u - \phi$. Then, by \cite[Lemma 2.2.27]{FeRo23}, $v$ is also a distributional solution to $\min \{ L v - f , v\} = 0$ in $\R^n$. From here, the proof of \autoref{thm:opt-reg} follows by combination of \autoref{lemma:prop4.4.14} and \autoref{prop:4.4.15-prelim} just as in the proof of \cite[Corollary 2.16]{FRS23} or \cite[Proposition 4.5.2]{FeRo23}.
\end{proof}


\end{document}